\let\mathbb=\varmathbb
\colorlet{MyBlue}{DodgerBlue!75!Black}
\colorlet{MyGreen}{DarkGreen!95!Black}
\numberwithin{equation}{section}  
\crefname{example}{Ex.}{Exs.}
\definecolor{ms}{rgb}{0,0,0.7}
 \definecolor{tvl}{rgb}{0.7,0.1,0.1}
\definecolor{sw}{rgb}{0.1,0.5,0.1}
\newcommand{\eps}{\varepsilon}
\DeclareMathOperator*{\argmin}{argmin}
\DeclareMathOperator{\zer}{zer}
\DeclareMathOperator{\conv}{Conv}
\DeclareMathOperator{\dist}{dist}
\DeclareMathOperator{\dif}{d\!}
\DeclareMathOperator{\dom}{dom}
\DeclareMathOperator{\Int}{int}
\DeclareMathOperator{\Id}{Id}
\newcommand{\ca}{\mathtt{a}}
\newcommand{\cb}{\mathtt{b}}
\newcommand{\cs}{\mathtt{s}}
\DeclareMathOperator{\prox}{\mathsf{prox}}
\newcommand{\bL}{\mathbf{L}}
\newcommand{\bK}{\mathbf{K}}
\newcommand{\B}{\mathbb{B}}
\newcommand{\bx}{x}
\newcommand{\by}{y}
\newcommand{\bu}{u}
\newcommand{\bz}{z}
\newcommand{\bv}{\mathbf{v}}
\renewcommand{\iff}{\Leftrightarrow}
\newcommand{\eqdef}{\triangleq}
\newcommand{\scrA}{\mathcal{A}}
\newcommand{\scrB}{\mathcal{B}}
\newcommand{\scrD}{\mathcal{D}}
\newcommand{\scrE}{\mathcal{E}}
\newcommand{\scrF}{\mathcal{F}}
\newcommand{\scrG}{\mathcal{G}}
\newcommand{\scrL}{\mathcal{L}}
\newcommand{\scrO}{\mathcal{O}}
\newcommand{\scrP}{\mathcal{P}}
\newcommand{\scrW}{\mathcal{W}}
\newcommand{\scrX}{\mathcal{X}}
\newcommand{\scrY}{\mathcal{Y}}
\renewcommand{\Pr}{\mathbb{P}}
\newcommand{\Ex}{\mathbb{E}}
\newcommand{\measP}{{\mathsf{P}}}
\newcommand{\Leb}{{\mathsf{Leb}}}
\newcommand{\Normal}{\mathsf{N}}
\newcommand{\Uniform}{\mathsf{U}}
\newcommand{\0}{\mathbf{0}}
\newcommand{\Rn}{\R^n}
\newcommand{\R}{\mathbb{R}}
\newcommand{\N}{\mathbb{N}}
\newcommand{\bC}{{\mathbf{C}}}
\DeclareMathOperator{\Opt}{Opt}							
\newcommand{\lip}{\mathsf{lip}}
\theoremstyle{plain}
\newtheorem{theorem}{Theorem}
\newtheorem{corollary}[theorem]{Corollary}
\newtheorem*{corollary*}{Corollary}
\newtheorem{lemma}[theorem]{Lemma}
\newtheorem{proposition}[theorem]{Proposition}
\newtheorem*{hypothesis}{Standing Hypothesis}
\theoremstyle{definition}
\newtheorem{definition}[theorem]{Definition}
\newtheorem*{definition*}{Definition}
\newtheorem{assumption}{Assumption}
\newcommand{\close}{\hfill{\footnotesize$\Diamond$}}
\theoremstyle{remark}
\newtheorem{remark}{Remark}
\newtheorem*{remark*}{Remark}
\newtheorem*{notation*}{Notational remark}
\newtheorem{example}{Example}
\numberwithin{theorem}{section}
\numberwithin{remark}{section}
\numberwithin{example}{section}
\DeclarePairedDelimiter{\inner}{\langle}{\rangle}
\DeclarePairedDelimiter{\dnorm}{\lVert}{\rVert_{\ast}}
\title{Derivative-free stochastic bilevel optimization for inverse problems}
\date{\today}
\author[1]{\small Mathias Staudigl, Simon Weissmann}
\author[2,3]{\small Tristan van Leeuwen}
\affil[1]{\footnotesize Mannheim University, Department of Mathematics, B6 26, 68159 Mannheim\\
(\href{mailto:mathias.staudigl@uni-mannheim.de}{mathias.staudigl@uni-mannheim}, \href{mailto:simon.weißmann@uni-mannheim.de}{simon.weissmann@uni-mannheim.de})
}
\affil[2]{\footnotesize Centrum Wiskunde \& Informatica, Science Park Amsterdam 123, 1098 XG Amsterdam, The Netherlands \\
(\href{mailto:T.van.Leeuwen@cwi.nl}{T.van.Leeuwen@cwi.nl})}
\affil[3]{\footnotesize Utrecht University, Heidelberglaan 8, 3584 CS Utrecht, The Netherlands}
\begin{document}

\maketitle

\begin{abstract}%
%

Inverse problems are key issues in several scientific areas, including signal processing and medical imaging. Data-driven approaches for inverse problems aim for learning model and regularization parameters from observed data samples, and investigate their generalization properties when confronted with unseen data. This approach dictates a statistical approach to inverse problems, calling for stochastic optimization methods. In order to learn model and regularisation parameters simultaneously, we develop in this paper a stochastic bilevel optimization approach in which the lower level problem represents a variational reconstruction method formulated as a convex non-smooth optimization problem, depending on the observed sample. The upper level problem represents the learning task of the regularisation parameters. Combining the lower level and the upper level problem leads to a stochastic non-smooth and non-convex optimization problem, for which standard gradient-based methods are not straightforward to implement. Instead, we develop a unified and flexible methodology, building on a derivative-free approach, which allows us to solve the bilevel optimization problem only with samples of the objective function values. We perform a complete complexity analysis of this scheme. Numerical results on signal denoising and experimental design demonstrate the computational efficiency and the generalization properties of our method. 
\end{abstract}
\section{Introduction}
\label{sec:Intro}
%
Bilevel optimization is a very important optimization methodology for solving inverse problems \cite{EHN1996,benning_burger_2018}. The strength of bilevel optimization is that it allows to endogenously learn hyper-parameters, which otherwise would have to be tuned manually. A very prominent instantiation of this is the task of learning regularization parameters \cite{haber2003learning,Kunisch:2013aa,Holler:2018aa}. A mathematical formulation of this problem is to first define a variational reconstruction method involving a data fidelity function $x\mapsto \scrL(\bK(x),\xi)$, where $\xi\in\Xi$ is the observed image, and $\bK:\scrX\to\scrD$ is the forward operator, mapping model parameters $x$ to observations in $\scrD$. We then define the reconstruction operator $x(y,\cdot):\Xi\to\scrX$ as a solution to the optimization problem
\begin{equation}\label{eq:VarHyperparameter}
x(y,\xi)\in\argmin_{x\in\scrX}\{\scrL(\bK(x),\xi)+R_{y}(x)\}\qquad \text{for }(y,\xi)\in\scrY\times \Xi.
\end{equation}
The function $R_{y}:\scrX\to\R\cup\{+\infty\}$ is a parameter-depending regularizer, that avoids overfitting and imposes a-priori known structure into the model parameter. Choosing this parameter $y\in\scrY$ a-priori is a severe bottleneck in the effective solution of the underlying inverse problem and poses significant practical challenges. Traditionally, this problem of \emph{hyperparameter tuning} has been heuristically solved and generally requires a large number of solutions of this variational problem for a pre-defined grid of parameter values $y$. Bilevel optimization replaces this heuristic search procedure by a disciplined optimization approach which selects model parameters on par with regularization parameters, given the data sample representing the inverse problem. However, the bilevel methodology is not only useful for solving the hyperparameter learning problem. It also has a significant impact for other inverse problems in which the forward operator itself exhibits a dependence on model parameters. This is generically the case in \emph{optimal experimental design}. In this framework we address the question where and when to take measurements, which variables to include, and what experimental conditions should be employed. Mathematically, this leads to a forward model $\bK_{y}$ which depends on a vector of design parameters $y\in\scrY$, which have to be chosen before the variational model is solved. Hence, problem \eqref{eq:VarHyperparameter} needs to be modified to
\begin{equation}\label{eq:OE}
x(y,\xi)\in\argmin_{x\in\scrX}\{\scrL(\bK_{y}(x),\xi)+R_{y}(x)\}\qquad \text{for }(y,\xi)\in\scrY\times\Xi.
\end{equation}
To obtain a generic set-up for learning selected components of \eqref{eq:OE} from data we adopt a supervised learning approach \cite{DataInverse}: We are given random variables $\xi=(\xi_{1},\xi_{2})\in\Xi_{1}\times\Xi_{2}=\Xi$, defined on a fixed probability space $(\Omega,\scrF,\Pr)$, where the first component contain model parameters, and the second component are the observations. This random element lives in some measurable space $\Xi$ with joint distribution $\measP_{\xi}$. Our aim is to learn the model parameters $x^{\ast}(y,\xi_{2})$ (as a function of regularization parameters and data), and regularization parameters $y^{\ast}\in\scrY$ simultaneously so that they are optimal given the expected risk defined in terms of the loss function and the data. This leads to the stochastic bilevel formulation
\begin{equation}\label{eq:SBL}
\begin{split}
y^{\ast}\in\argmin_{y\in\scrY}\{\Ex_{\xi}[f(x^{\ast}(y,\xi_{2}),\xi_{1})]+r_{1}(y)\}\\ 
x^{\ast}(y,\xi_{2})\in\argmin_{x\in\scrX}\{g(x,y,\xi_{2})+r_{2}(x)\}
\end{split}
\end{equation}
The \emph{upper level objective}  $\Ex_{\xi}[f(x^{\ast}(y,\xi_{2}),\xi_{1})]+r_{1}(y)$ contains an expectation-valued part involving a tracking-type function $f:\scrX\times\Xi_{1}\to\R$, usually assumed to be sufficiently smooth, and a regularizer/penalty function $r_{2}(y)$, i.e.~chosen to promote sparsity in the parameter vector. The \emph{lower level objective} $g(x,y,\xi_{2})+r_{2}(x)$ is a variational model for obtaining model parameters, as a function of the realized data $\xi_{2}\in\Xi_{2}$ and the tuneable hyperparameter $y\in\scrY$.

\begin{example}[Bilevel Learning]
The bilevel learning approach for inverse problem is a statistical learning methodology to select the regularization parameter in minimization based inverse problems. The unknown parameter and the corresponding observation are modelled as jointly distributed random variables $(X,D):\Omega\to \Xi_1\times  \Xi_2=\scrX\times\scrD$, defined as
\[D(\omega) = \bK_{y_1} X(\omega) + Z(\omega),\quad \omega\in\Omega\,, \]
where $Z:\Omega\to\Xi_2$ denotes measurements noise. The forward operator may depend explicitly on a hyperparameter $y_1\in\scrY_1$. In order to build a reconstruction of the unknown parameter $X(\omega)$ for a fixed $\omega\in\Omega$, we consider the variational form of the inverse problem 
\[\min_{x\in\scrX}\ g(x,(y_1,y_2),D(\omega)),\quad g(x,(y_1,y_2),D(\omega))\eqdef \scrL(\bK_{y_1}(x),D(\omega))+R_{y_2}(x)\,, \]
where $\scrL:\scrD\times \scrD\to\R$ is a data fidelity function and $R_{y_2}:\scrX\to\R$ is a regularization function with regularization parameter $y_2\in\scrY$. The reconstruction highly depends on the choice of hyperparameter $(y_1,y_2)\in\scrY\eqdef \scrY_1\times \scrY_2$ and the goal in bilevel learning is to choose these hyperparameters based on the stochastic bilevel optimization problem \eqref{eq:SBL}, where $(X,D)$ take the role of $(\xi_1,\xi_2)$. This approach has been investigated in many previous studies (see e.g. \cite{Ochs:2016aa,Holler:2018aa,Ehrhardt:2021ab}). We will provide more details about this application in Section~\ref{sec:numerics}.
\close
\end{example}

\subsection{Challenges and related literature}
Directly solving the stochastic bilevel optimization problem \eqref{eq:SBL} is challenging for at least two reasons: First, in order to solve the upper level problem, we need to know a solution of the lower level problem. However, this is just our variational inverse problem, and thus is typically a large-scale optimization problem itself (although very often convex). Even if this can entail computational challenges, it can in principle be overcome via state-of-the art convex programming techniques; The second challenge that arises is how to optimize the upper level objective function, which is only available as an implicit function of the lower level solution mapping $x^{\ast}(y,\xi)$. This problem becomes even more pronounced when the lower level solution is not unique. While non-uniqueness could be dealt with penalty methods (see e.g. \cite{Kwon:2023aa,FOMPenalty}), the presence of stochastic perturbations in the problem data, renders also this approach challenging. Instead, in this paper we investigate in detail solution methods for settings in which the lower level mapping can be solved up to some accuracy at reasonable computational costs, and then use this mapping to construct a simple optimization method that avoids delicate issues such as computing gradients, or even higher-order information of the upper level objective. Specifically, we make the following standing hypothesis throughout this paper\footnote{A more precise formulation of this hypothesis will be given in Section \ref{sec:Problem}.}: 
\begin{hypothesis}
    The lower level solution problem admits a unique solution $x^{\ast}(y,\xi_{2})$, which is a measurable function of the data $\xi_{2}$. 
\end{hypothesis}
Working under this hypothesis, the main remaining question is how to effectively solve the upper level problem 
\begin{equation}\label{eq:Psi}
 \min_{y\in\scrY}   \Psi(y)\eqdef \Ex_{\xi}[F(x^{\ast}(y,\xi_{2}),\xi_{1})]+r_{1}(y).
\end{equation}
The challenge within this formulation lies in the fact that the first function $y\mapsto \Ex_{\xi}[F(x^{\ast}(y,\xi_{2}),\xi_{1})]$ is expectation-valued (hence hard to evaluate) and in general non-smooth and non-convex. The lack of regularity properties make a direct gradient-based approach less qualified, without even talking about the difficulties in computing a gradient (aka the \emph{hypergradient} \cite{Grazzi:2020aa,Ehrhardt:2024aa}) of this composite function. The key complications arising in this formulation are (i) the dependence of the lower level solution $\bx^{\ast}(\by,\xi_{2})$ on the random variable $\xi_{2}$, (ii) the potential non-smoothness of the lower level variational problem, (iii) the non-smoothness of the upper level problem. All three complications make any attempt to adapt standard methods for solving bilevel optimization problems complicated. One main contribution of this paper is to construct a practically efficient strategy for solving the stochastic bilevel problem \eqref{eq:SBL} building on a zeroth-order stochastic oracle model for estimating the hypergradient, allowing for bias in the random estimator, and inexactness of the solution of the lower level problem. Although this setting received a significant amount of attention recently, mainly driven from applications in machine learning such as meta-learning \cite{rajeswaran2019meta}, hyper-parameter optimization \cite{franceschi2018bilevel,sinha2024gradient} and reinforcement learning \cite{hong2023two}, the composite setting embodied in \eqref{eq:Psi} is complicating the hypergradient estimation task a lot. The survey \cite{BilevelLearningIEEE} gives a comprehensive state-of-the-art overview. A technical contribution of this paper is to construct a practically efficient strategy for solving the stochastic bilevel problem \eqref{eq:SBL}, building on a \emph{zeroth-order stochastic oracle} model for estimating the hypergradient, allowing for bias in the random estimator, and inexactness of the solution of the lower level problem. 

\paragraph{Stochastic Bilevel Optimization}
The bilevel instance \eqref{eq:SBL} differs from the typical machine learning setting in our requirement that the lower level problem needs to be solved for any realization of the random variable $\xi_{2}$. In machine learning, the typically encountered formulation has no non-smooth terms and no explicit constraints:
\[
\min_{\by\in \R^{d}} \psi(\by)=f(\bx^{\ast}(\by),\by)\quad\text{s.t.: } \bx^{\ast}(\by)\in\argmin_{x\in\Rn}g(\bx,\by)
\]
where $f(x,y)=\Ex[F(\bx,\by,\xi_{1})]$ and $g(\bx,\by)=\Ex[G(\bx,\by,\xi_{2})]$. Under strong regularity conditions the hyperobjective $\psi$ is smooth enough so that its gradient can be characterized by the implicit function theorem 
$$
\nabla\psi(\by)=\nabla_{y} f(\bx^{\ast}(\by),\by)-\nabla^{2}_{xy}g(\bx^{\ast}(\by),\by)\left[\nabla^{2}_{xx}g(\bx^{\ast}(\by),\by)\right]^{-1}\nabla_{x}f(\bx^{\ast}(\by),\by).
$$
In the composite non-smooth setting arising in inverse problems, and which is of interest in this paper, there is no hope that a similar formula for the hypergradient can be defined. Recently, \cite{Cui:2022aa} propose a stochastic zeroth-order method for a class of stochastic mathematical programs under equilibrium constraints, in which the lower-level problem is described by the solution set of a stochastic variational inequality, and the upper-level problem is a stochastic unconstrained optimization problem. We extend this setting to the non-smooth proximal framework in both the upper and the lower-level problem. This is a non-trivial extension, since it requires a fundamentally different analysis of the iteration complexity of the method in terms of the prox-gradient mapping (cf. \eqref{eq:gradient}). Moreover, we provide complexity estimates on the criticality measure represented by the prox-gradient mapping via an integrated smoothing and zeroth-order optimization scheme, without any a-priori convexity assumptions on the hyperobjective.

\paragraph{Zeroth-order stochastic optimization}
The numerical solution of stochastic optimization problems requires the availability of a stochastic oracle. In low informational settings such as simulation-based, or black-box optimization, an attractive stochastic oracle is one that relies only on noisy function queries. Such zeroth-order methods have been studied in the literature under the name of derivative-free optimization \cite{conn2009introduction,spall2005introduction}, Bayesian optimization \cite{garnett_bayesoptbook_2023}, and optimization with bandit feedback \cite{CesLug06,Duvocelle:2022aa}. Moreover, gradient-free methods received a lot of attention within mathematical imaging \cite{Ehrhardt:2024aa,Ehrhardt:2021ab}, and scientific computing \cite{Kozak:2022aa,Pougkakiotis:2023aa}, as well as in machine learning and computational statistics \cite{agarwal2010optimal,Duchi:2015aa,Ghadimi:2013aa}. We discuss the connection to the most important references in the following. 

\cite{Berahas:2022aa} performs a detailed comparison of different derivative-free methods based on noisy function evaluations, assuming that the noise component is additive and with zero mean and bounded range. They established conditions on the gradient estimation errors that guarantee convergence to a neighborhood of the solution. We perform a complexity analysis of a derivative free method in which the function values are noisy evaluations of the hyperobjective of the bilevel problem \eqref{eq:SBL}, without a uniformly bounded noise assumption. Instead, we only assume standard variance bounds in $L^{p}$, for some $p\geq 2$. 

\cite{Balasubramanian:2022aa} provide an in-depth analysis of zero-order estimators for solving general stochastic optimization problems, using a Frank-Wolfe method, a stochastic proximal gradient method, or a higher-order method building on the cubic regularization globalization technique. Their general complexity statements are not immediately transferable to our problem, since we solve a stochastic bilevel problem, with potentially inexact feedback between the upper and the lower level problem. This noisy and inexact feedback mechanism leads to an additional bias in the gradient estimator, which needs to be carefully balanced in order to prove convergence guarantees of the method.

\subsection{Main Contributions and outline}
Our main results can be summarized as follows:
\begin{enumerate}
    \item Under weak regularity assumption on the hyperobjective $h(y)=\Ex[F(x^{\ast}(y,\xi_{2}),\xi_{1})]$ (essentially only Lipschitz continuity), we derive an iteration complexity statement in terms of the proximal gradient mapping for the Gaussian smoothed objective $h_{\eta}$. In particular, we give complexity statements assuming that the lower level problem can be solved exactly, or inexactly, with a controlled precision in an $L^{p}$ sense. 
   \item We particularize this result in the convex case to obtain a complexity statement in terms of the original objective function optimality gap.  
   \item To relate the complexity statement derived for the smoothed hyperobjective, we define a notion for a relaxed stationary point, using a fuzzy version of the Goldstein subgradient, originally introduced in \cite{Gold77} for Lipschitz continuous mathematical programs. This allows us to transfer the complexity statements derived in pervious sections for the smoothed prox-gradient mapping to a criticality measure involving the Goldstein subgradient. 
\end{enumerate}

The remainder of the manuscript is structured as follows. We introduce our notation and some known results, used in the analysis, in Section~\ref{sec:prelims}. Section~\ref{sec:Problem} presents the formulation of the stochastic bilevel optimization problem with the corresponding assumptions. In Section~\ref{sec:ZeroOrder}, we introduce our proposed zeroth-order optimization method. Section~\ref{sec:Nonconvex} begins the convergence analysis in a non-convex setting with a fixed smoothing parameter, covering both exact and inexact lower level solutions. We then proceed to Section~\ref{sec:convex}, where we analyze the convex case and quantify the smoothing error. Section~\ref{sec:Goldstein} addresses the explicit complexity and relaxed stationarity for non-convex problems. In Section~\ref{sec:numerics}, we apply our algorithm to linear inverse problems, with a particular focus on imaging. Finally, we conclude the main body of the manuscript with a summary in Section~\ref{sec:conclusion}. For clarity, most of the proofs are deferred to Appendices~\ref{app:Smoothing}--\ref{app:2}. 
\section{Notation and Preliminaries}
\label{sec:prelims}
%
For a finite dimensional real vector space $\scrE$, we denote by $\scrE^{\ast}$ its dual space. The value of a linear function $s\in\scrE^{\ast}$ at point $x\in\scrE$ is denoted by $s(x)\eqdef\inner{s,\bx}$. We endow the spaces $\scrE$ and $\scrE^{\ast}$ with Euclidean norms 
$\norm{\bx}=\inner{B\bx,\bx}^{1/2}$ and $\norm{s}_{\ast}=\inner{s,B^{-1}s}^{1/2}$, where $B=B^{\ast}$ represents the Riesz isomorphism, i.e. a positive definite linear operator from $\scrE$ to $\scrE^{\ast}$. For a subset $C\subset\scrE$ we define the distance of $x\in\scrE$ to $M$ by $\dist(x,C)\eqdef \inf_{z\in C}\norm{x-z}$. The ball with center $x$ and radius $r>0$ is denoted as $\B(x,r)$. The convex hull of a set $X$ is denoted as $\conv(X)$.  If $\Omega$ is a topological space, we denote by $\scrB(\Omega)$ the Borel $\sigma$-algebra.  In this paper we consider functions with different levels of smoothness. We say a function $h:\scrE\to\R$ belongs to class $\bC^{0,0}(\scrE)$ if there exists a constant $\lip_{0}(h)>0$ such that 
\[
|{h(\bx_{1})-h(\bx_{2})}|\leq \lip_{0}(h)\norm{\bx_{1}-\bx_{2}}, \qquad\forall \bx_{1},\bx_{2}\in\scrE\,,
\]
$h$ belongs to class $\bC^{1,1}(\scrE)$ if 
\[
\norm{\nabla h(\bx_{1})-\nabla h(\bx_{2})}_{\ast}\leq \lip_{1}(h)\norm{\bx_{1}-\bx_{2}},\qquad \forall x_1,x_2\in\scrE.
\]
For $h\in\bC^{1,1}(\scrE)$, we have the Lipschitz descent Lemma \cite[Lemma 1.2.3]{Nes18}
\begin{equation}\label{eq:descent}
h(\bx_{2})\leq h(\bx_{1})+\inner{\nabla h(\bx_{1}),\bx_{2}-\bx_{1}}+\frac{\lip_{1}(h)}{2}\norm{\bx_{2}-\bx_{1}}^2,\quad \forall x_1,x_2\in\scrE.
\end{equation}

For extended real-valued convex functions $h:\scrE\to[-\infty,\infty]$, we define its (effective) domain $\dom(h)=\{y\in\scrY\vert h(y)<\infty\}$. The convex subdifferential is the set-valued mapping $\partial h(\by)\eqdef \{v\in\scrE^{\ast}\vert h(\tilde{\by})\geq h(\by)+\inner{v,\tilde{\by}-\by}\quad\forall \tilde{\by}\in\scrE\}$. Elements of this set $\bv\in\partial h(\by)$ are called subgradients, and the inequality defining the set is called the subgradient inequality. A convex function is called \emph{proper} if it never attains the value $-\infty$. 
\begin{definition}\label{def:deltaSubgradient}
Let $\delta\geq 0$. For a convex function $h:\scrE\to(-\infty,+\infty]$, the $\delta$-subdifferential $\partial_{\delta}h(\by)$ the set of vectors $v\in\scrE^{\ast}$ satisfying 
\[
h(\tilde{\by})\geq h(\by)-\delta+\inner{v,\tilde{\by}-\by}\qquad \forall \tilde{\by}\in\scrE.
\]
\end{definition}
Note that the above definition reduces naturally to the convex subdifferential by setting $\delta=0$. 
\begin{definition}
The proximal operator of a closed convex and proper function $g:\scrE\to(-\infty,\infty]$ is defined by  
\begin{equation}\label{eq:proxop}
\prox_{\alpha g}(\bx)\eqdef\argmin_{\bu\in\scrE}\{g(\bu)+\frac{1}{2\alpha}\norm{\bu-\bx}^{2}\}
\end{equation}
\end{definition}
The prox-operator is always 1-Lipschitz (non-expansive) \cite{BauCom16}.
We also make use of the Pythagorean identity on the Euclidean space $\scrE$ with inner product $\inner{B\cdot,\cdot}$:
\begin{equation}\label{eq:Pythagoras}
2\inner{\by-\bu,B(\bx-\by)} =\norm{\bx-\bu}^{2}-\norm{\bx-\by}^{2}-\norm{\by-\bu}^{2}
\end{equation}
%
For $p\in[1,\infty]$, let $L^{p}(\Omega,\scrF,\Pr;\scrE)$ be the set of all random variables for which the integral 
$\Ex_{\Pr}\left[\abs{f}^{p}\right]\eqdef \int_{\Omega}\abs{f(\omega)}^{p}\dif\Pr(\omega)$ exists and is finite. This is a Banach space with norm $\abs{f}_{p}\eqdef \left(\Ex_{\Pr}\left[\abs{f}^{p}\right]\right)^{1/p}$.
\section{Problem Formulation}
\label{sec:Problem}
%

We denote by $(\scrX,\norm{\cdot}_{\scrX})$ and $(\scrY,\norm{\cdot}_{\scrY})$ finite dimensional Euclidean vector spaces, with dual spaces $(\scrX^{\ast},\norm{\cdot}_{\scrX^{\ast}}),(\scrY^{\ast},\norm{\cdot}_{\scrY^{\ast}})$. Let $(\Omega_{0},\scrA,\Pr_{0})$ be a complete probability space, carrying random elements $\xi_{1} \in L^{0}(\Omega_{0},\scrA_{0},\Pr_{0};\Xi_{1})$ and $\xi_{2}\in L^{0}(\Omega_{0},\scrA_{0},\Pr_{0};\Xi_{2})$ taking values in a measurable space $(\Xi_{i},\scrB(\Xi_{i})),i=1,2$. We define $\xi(\omega)\eqdef (\xi_{1}(\omega),\xi_{2}(\omega))$, and denote the distribution of this random element as $\measP_{\xi}\eqdef \Pr_{0}\circ\xi^{-1}$. Accordingly, the marginal distributions are defined as $\measP_{\xi_1}(A)\eqdef \measP_{\xi}(A\times\Xi_{2})$ and $\measP_{\xi}(\Xi_{1}\times B)\eqdef \measP_{\xi_2}(B)$ for $A\in\scrB(\Xi_{1})$ and $B\in\scrB(\Xi_{2})$, respectively.
 
\subsection{The hyperobjective program}
In problem \eqref{eq:SBL}, the variable $\by\in\scrY$ (i.e. the learning paramters)  is chosen before the event $\omega$ is realized, whereas $\bx$ is a decision variable (i.e. the model parameters) that is implemented just-in-time, given $\by\in\scrY$ and the realization $\xi_{2}(\omega)\in\Xi$. A solution of the lower-level optimization problems constitutes therefore of a feedback mapping $\bx^{\ast}(\cdot,\xi_{2})\in L^{\infty}(\scrY;\scrX)$, satisfying a measurability property with respect to the noise variable:
\[
\omega\mapsto \bx^{\ast}(\by,\xi_{2}(\omega))\in L^{0}(\Omega,\scrA_{0},\Pr_{0};L^{\infty}(\scrY;\scrX)).
\]
In particular, by the Doob-Dynkin Lemma, the mapping $\bx^{\ast}(\by,\xi_{2}(\cdot))$ is $\sigma(\xi_{2})$-measurable, for all $\by\in\scrY$. The following standing assumption shall apply throughout the paper.
\begin{assumption} 
 $r_{1}:\scrY\to(-\infty,\infty]$ is a closed convex and proper function.
 \end{assumption}
 \begin{assumption}
 \label{ass:Caratheodory}
 $F:\scrX\times\Xi_{1}\to\R$ is a Carath\'eodory function: 
 \begin{itemize}
 \item[(a)] $\xi_{1}\mapsto F(\bx,\xi_{1})$ is $\sigma(\xi_{1})$-measurable for every $\bx\in\scrX$;
 \item[(b)] $x\mapsto F(x,\xi_{1})$ is continuous for almost every $\xi_{1}\in\Xi_{1}$.
 \end{itemize}
 \end{assumption}
 
\begin{assumption}
\label{ass:UL}
The function $\bx\mapsto \Ex_{\Pr_{0}}[F(\bx,\xi_{1})]$ is finite for all $\bx\in\scrX$. There exists a positive valued random variable $\lip_{0}(F(\cdot,\xi_{1})):\Omega\to(0,\infty)$ such that $\abs{\lip_{0}(F(\cdot,\xi_{1}))}_{1}<\infty$, and for all $\bx_{1},\bx_{2}\in\scrX$ it holds that 
\begin{equation}\label{eq:FLipschitz}
\abs{F(\bx_{1},\xi_{1})-F(\bx_{2},\xi_{1})}\leq\lip_{0}(F(\cdot,\xi_{1}))\norm{\bx_{1}-\bx_{2}}_{\scrX}. 
\end{equation}
\end{assumption} 
  %

Assumption \ref{ass:UL} implies that $\bx\mapsto f(\bx)\eqdef \Ex_{\Pr_{1}}[F(\bx,\xi_{1})]$ is Lipschitz continuous \cite[][Thm. 7.44]{ShaDenRus09}, with Lipschitz constant $\lip_{0}(f)=\abs{\lip_{0}(F(\cdot,\xi_{1}))}_{1}$. In particular, the function $\bx\mapsto f(\bx)$ is measurable. 
\begin{assumption}
\label{ass:LL}
 $r_{2}:\scrX\to(-\infty,\infty]$ is proper, closed and convex. For all $\by\in\dom(r_{1})$, the function $\bx\mapsto g_{2}(\bx,\by,\xi_{2})$ is continuously differentiable and convex.
 \end{assumption}
 \begin{assumption}
 \label{ass:ULL}
 Let $(\by,\xi_{2})\in\Int\dom(r_{1})\times\Xi_{2}$ be given. The parameterized variational inequality 
 \begin{equation}\label{eq:VI}
\text{ Find $\bx\in\scrX$ such that }0\in \nabla_{x}g(\bx,\by,\xi_{2})+\partial r_{2}(\bx) 
 \end{equation}
 has a unique solution $\bx^{\ast}(\by,\xi_{2})$, enjoying the following properties:
 \begin{itemize}
 \item[(S.1)] $\xi_{2}\mapsto \bx^{\ast}(\by,\xi_{2})$ is measurable, uniformly in $\by\in\Int\dom(r_{1})$;
 \item[(S.2)] $\by\mapsto \bx^{\ast}(\by,\xi_{2})$ is Lipschitz continuous on $\Int\dom(r_{1})$, for almost all $\xi_{2}\in\Xi_{2}$. 
 \end{itemize}
 \end{assumption}
Combining Assumptions \ref{ass:UL} and \ref{ass:ULL}, we can define the stochastic hyperobjective
\begin{equation}\label{eq:implicit}
H:\scrY\times\Xi\to\R,\quad  (\by,\xi)\mapsto H(\by,\xi)\eqdef F(\bx^{\ast}(\by,\xi_{2}),\xi_{1}).
 \end{equation}
Note that $H(\cdot,\xi)\in\bC^{0,0}(\scrY)$. In order to bound the variance of our gradient estimator, we need an a-priori assumption on the integrability of the random Lipschitz modulus.
\begin{assumption}
\label{ass:LipH}
We assume that $\abs{\lip_{0}(H(\cdot,\xi))}_{2}<\infty$. 
\end{assumption}
Thanks to the inherited measurability, we can leverage Fubini's theorem to obtain $h(\by)\eqdef\Ex_{\Pr}[H(\by,\xi)]=\int_{\Xi_{2}}f(\bx^{\ast}(\by,w_{2}))\dif\measP_{\xi_{2}}(w_{2})$.
The fact that $f\in\bC^{0,0}(\scrY)$ combined with (S.2) allows us to conclude $h\in\bC^{0,0}(\scrY)$. 

Absorbing the lower level solution into the upper level, we arrive at the reduced formulation of the upper level optimization problem 
\begin{equation}\label{eq:ImplicitProg}
\Psi^{\Opt}\eqdef\inf_{\by\in\scrY}\{\Psi(\by)\eqdef h(\by)+r_{1}(\by)\},
\end{equation}
 which is commonly known in bilevel optimization as the \emph{hyperobjective optimization problem}. 
 
\subsection{Approximate stationarity conditions}
\label{sec:stationary}
The hyperobjective program \eqref{eq:ImplicitProg} is a non-convex and non-smooth optimization problem, involving a Lipschitz continuous function $\by\mapsto h(\by)$, and a convex composite term $\by\mapsto r_{1}(\by)$. As is typical in non-convex optimization, our aim is to localize a specific class of approximate stationary points, as we are about to define in this section. For a locally Lipschitz function $h:\scrY\to \R$, the generalized directional derivative \cite{Clar90} of $h$ at $\by\in\scrY$ in direction $\bu\in\scrY$ is defined as 
\[
h^{\circ}(\by;\bu)=\lim\sup_{\by'\to\by,t\to 0^{+}}\frac{h(\by'+t\bu)-h(\by')}{t}
\]
The Clarke subdifferential of $h$ at $\by$ is the set 
\[
\partial_{\rm C}h(\by)\eqdef\{s\in\scrY^{\ast}\vert h^{\circ}(\by,\bu)\geq \inner{s,\bu}\;\forall \bu\in\scrY\}.
\]
The primary goal of non-smooth non-convex optimization is the search for stationary points. A point $\by\in\scrY$ is called (Clarke)-\emph{stationary} for $\Psi=h+r$ if the inclusion 
\[
0\in\partial_{C}h(\by)+\partial r_{1}(\by)
\]
is satsified.
\begin{definition}
Given $\eps>0$, a point $\by^{\ast}\in\scrY$ is called an $\eps$-stationary point of \eqref{eq:ImplicitProg} if 
\begin{equation}
\dist(0,\partial_{C}\Psi(\by^{\ast}))\leq \eps. 
\end{equation}
\end{definition}
Recently, a series of papers challenged the question whether optimization algorithms are able to identify $\eps$-stationary points in finite time. \cite{Zhang:2020aa} provided a definite negative answer to this question, by demonstrating that no first-order method is able to identify $\eps$-stationary points in finite time. Therefore, we will content ourselves with a more modest stationarity notion.
\begin{definition}[\cite{Gold77}]
\label{def:Gold}
For any $\delta>0$, the Goldstein $\delta$-subdifferential of $h$ at $\by\in\scrY$ is the set
\begin{equation}\label{eq:Gold}
\partial^{\delta}_{\rm G}h(\by)\eqdef \conv\left(\bigcup_{\tilde{y}\in\B(\by,\delta)}\partial_{\rm C}h(\tilde{y})\right).
\end{equation}
\end{definition}
We employ the Goldstein subdifferential for relating the stationarity measures of a smoothed auxiliary model, with stationarity with respect to the original problem. As such, our proposal of an approximate stationary point combines the definitions of \cite{Davis:2019aa,DavDru19} for stochastic subgradient methods, and \cite{Lei:2024aa} for zeroth-order methods. 
\begin{definition}
\label{def:GSstationarypoint}
For any $(\eps,\delta)>0$, we call a random variable $\by^{\ast}\in L^{0}(\Omega,\scrF,\Pr;\scrY)$ an $(\eps,\delta)$-stationary point of \eqref{eq:Psi} if
\begin{equation}\label{eq:GoldStationary}
\Ex\left[\dist\left(\by^{\ast},\{\by\vert \dist(0,\partial^{\delta}_{G}h(\by)+\partial r_{1}(\by))^{2}\leq\eps\}\right)^{2}\right]\leq\eps\,. 
\end{equation}
\end{definition}


\section{Derivative free randomized proximal gradient method}
\label{sec:ZeroOrder}
\subsection{Gaussian Smoothing of the implicit function}
\label{sec:smoothing}
%
To simplify the notation, we write $\norm{\bu}_{\scrY}\equiv \norm{\bu}=\sqrt{\inner{B\bu,\bu}}$, given the Riesz mapping $B=B^{\ast}\succ 0$ from $\scrY$ to $\scrY^{\ast}$. We denote the dimension of the Euclidean space $\scrY$ by $n$. The $n$-dimensional Lebesgue measure on $(\scrY,\scrB(\scrY))$ is denoted by $\Leb_{\scrY}$, and we typically write $\dif\by$, instead of $\dif\Leb_{\scrY}(\by)$. We define the Gaussian Lebesgue density on $(\scrY,\scrB(\scrY),\Leb_{\scrY})$ as 
$$
\pi_{\eta}(\bz\vert \by)\eqdef \frac{\sqrt{\det(B)}}{(2\pi)^{n/2}\eta^{n}}\exp\left(-\frac{1}{2\eta^{2}}\norm{\bz-\by}^{2}\right)=\frac{1}{\Upsilon\eta^{n}}\exp\left(-\frac{1}{2\eta^{2}}\norm{\bz-\by}^{2}\right),\quad \Upsilon\eqdef\frac{(2\pi)^{n/2}}{\sqrt{\det(B)}}. 
$$
Given a function $h:\scrY\to\R$ and a positive parameter $\eta>0$, we define the Gaussian smoothing of $h$ as the convolution 
\begin{equation}\label{eq:smoothing}
h_{\eta}(\by)\eqdef (h\circledast \pi_{\eta})(\by)=\int_{\scrY}h(\bz)\pi_{\eta}(\bz\vert\by)\dd\bz. 
\end{equation}
Let us introduce an independent probability space $(\Omega_{1},\scrA_{1},\Pr_{1})$. We say $U:(\Omega_{1},\scrA_{1})\to(\scrY,\scrB(\scrY))$ is a standard Gaussian random variable on $\scrY$, denoted as $U\sim \Normal(0,\Id_{\scrY})$, if $\Pr_{1}\circ U^{-1}$ admits the density $\pi_{1}(\cdot\vert\0)\equiv\pi$ on $\scrY$ with respect to $\Leb_{\scrY}$. Via the change of variables $\bz=\by+\eta\bu$, we can rewrite the above integral as 
\[
h_{\eta}(\by)=\int_{\scrY}h(\by+\eta\bu)\pi(\bu)\dd\bu=\Ex_{\Pr_{1}}[h(\by+\eta U)].
\]
For $\eta>0$, the function $\by\mapsto h_{\eta}(\by)$ is differentiable and $\eta>0$ plays the role of a smoothing parameter. Using the expression above, we immediately deduce the formula for the gradient as 
\begin{equation}\label{eq:GradImplicit}
\nabla h_{\eta}(\by)=\Ex_{\Pr_{1}}\left[\frac{h(\by+\eta U)}{\eta}BU\right]=\Ex_{\Pr_{1}}\left[\frac{h(\by+\eta U)-h(\by)}{\eta}BU\right]
\end{equation}
Specifically, we leverage upon the work \cite{Nesterov:2017wm}, and use the following estimates.\footnote{For being self-contained, we provide proofs of these facts in Appendix \ref{app:Smoothing}.}
\begin{lemma}\label{lem:Liph}
Let $h\in\bC^{0,0}(\scrY)$. Then $h_{\eta}\in\bC^{0,0}(\scrY)$ and $\lip_{0}(h_{\eta})\leq \lip_{0}(h)$ for all $\eta>0$. 
\end{lemma}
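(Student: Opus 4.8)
The plan is to prove the two claims of Lemma~\ref{lem:Liph} directly from the integral representation $h_{\eta}(\by)=\Ex_{\Pr_{1}}[h(\by+\eta U)]$, without using differentiability of $h$. First I would show that $h_{\eta}$ is finite and well-defined: since $h\in\bC^{0,0}(\scrY)$ we have $|h(\by+\eta\bu)|\leq |h(\by)|+\lip_{0}(h)\eta\norm{\bu}$, and $\Ex_{\Pr_{1}}[\norm{U}]<\infty$ (the standard Gaussian has finite first moment), so the expectation exists for every $\by\in\scrY$.

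Next, for the Lipschitz estimate, fix $\by_{1},\by_{2}\in\scrY$ and write
\[
|h_{\eta}(\by_{1})-h_{\eta}(\by_{2})|=\left|\Ex_{\Pr_{1}}[h(\by_{1}+\eta U)-h(\by_{2}+\eta U)]\right|\leq \Ex_{\Pr_{1}}\left[|h(\by_{1}+\eta U)-h(\by_{2}+\eta U)|\right].
\]
Applying the Lipschitz property of $h$ pointwise inside the expectation gives $|h(\by_{1}+\eta U)-h(\by_{2}+\eta U)|\leq \lip_{0}(h)\norm{\by_{1}-\by_{2}}$, which is a constant in $U$, so the expectation is bounded by $\lip_{0}(h)\norm{\by_{1}-\by_{2}}$. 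This shows $h_{\eta}\in\bC^{0,0}(\scrY)$ with $\lip_{0}(h_{\eta})\leq\lip_{0}(h)$ for every $\eta>0$, as claimed. (One may optionally also record that $h_{\eta}\to h$ uniformly as $\eta\to 0$ with rate $\lip_{0}(h)\eta\,\Ex[\norm{U}]$, though the lemma does not require it.)

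There is essentially no serious obstacle here; the only point requiring a small amount of care is the justification that one can pass the absolute value inside the expectation and that the integrand is genuinely integrable — both follow from the linear growth bound above and dominated convergence / Jensen's inequality for the conditional form $|\Ex[\cdot]|\leq\Ex[|\cdot|]$. I would also note in passing that differentiability of $h_{\eta}$ (used implicitly elsewhere) follows from differentiating under the integral sign in \eqref{eq:smoothing}, which is licensed because $\pi_{\eta}(\bz\vert\by)$ is smooth in $\by$ with derivatives dominated locally uniformly by an integrable function, given the linear growth of $h$; but for this lemma only the $\bC^{0,0}$ statement and the constant comparison are needed.
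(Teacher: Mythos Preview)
Your proposal is correct and follows essentially the same approach as the paper's proof: both pass the absolute value inside the expectation via $|\Ex[\cdot]|\leq\Ex[|\cdot|]$ and then apply the pointwise Lipschitz bound $|h(\by_{1}+\eta U)-h(\by_{2}+\eta U)|\leq \lip_{0}(h)\norm{\by_{1}-\by_{2}}$. Your additional remarks on well-definedness and integrability are helpful context but not required for the lemma as stated.
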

\begin{lemma}[\cite{Nesterov:2017wm}, Theorem 1]
\label{lem:boundfunctions}
Let $h\in\bC^{0,0}(\scrY)$ and $\eta>0$. Then for all $\by\in\scrY$ it holds
\[
\abs{h_{\eta}(\by)-h(\by)}\leq\eta \lip_{0}(h)\sqrt{n}\,.
\]
\end{lemma}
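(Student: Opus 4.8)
The plan is to estimate the difference $h_\eta(\by) - h(\by)$ directly from the integral representation $h_\eta(\by) = \Ex_{\Pr_1}[h(\by + \eta U)]$ established just above the statement. First I would write
\[
h_\eta(\by) - h(\by) = \Ex_{\Pr_1}\big[h(\by + \eta U) - h(\by)\big],
\]
which is legitimate because $h(\by)$ is constant with respect to the expectation and $\Ex_{\Pr_1}[1] = 1$ (the density $\pi$ integrates to one). Taking absolute values and moving them inside the expectation via Jensen's inequality (or monotonicity of the integral) gives
\[
\abs{h_\eta(\by) - h(\by)} \le \Ex_{\Pr_1}\big[\,\abs{h(\by + \eta U) - h(\by)}\,\big].
\]

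Next I would invoke the hypothesis $h \in \bC^{0,0}(\scrY)$, i.e. the Lipschitz bound $\abs{h(\bx_1) - h(\bx_2)} \le \lip_0(h)\norm{\bx_1 - \bx_2}$, applied with $\bx_1 = \by + \eta U$ and $\bx_2 = \by$. This yields $\abs{h(\by + \eta U) - h(\by)} \le \lip_0(h)\,\eta\,\norm{U}$, hence
\[
\abs{h_\eta(\by) - h(\by)} \le \eta\,\lip_0(h)\,\Ex_{\Pr_1}[\norm{U}].
\]
It then remains to bound $\Ex_{\Pr_1}[\norm{U}]$ by $\sqrt{n}$, where $n = \dim\scrY$ and $\norm{\cdot}$ is the $B$-weighted Euclidean norm with $U \sim \Normal(0,\Id_{\scrY})$.

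The only genuine computation — and the one mild obstacle — is the moment bound $\Ex_{\Pr_1}[\norm{U}] \le \sqrt{n}$. I would handle it by Jensen's inequality applied to the concave function $t \mapsto \sqrt{t}$: $\Ex_{\Pr_1}[\norm{U}] = \Ex_{\Pr_1}\big[\sqrt{\norm{U}^2}\big] \le \sqrt{\Ex_{\Pr_1}[\norm{U}^2]}$. Since $\norm{U}^2 = \inner{BU,U}$ and $U$ has density $\pi(\bu) \propto \exp(-\tfrac12\inner{B\bu,\bu})$ with respect to $\Leb_\scrY$, a change of variables $\bw = B^{1/2}\bu$ reduces this to the standard Gaussian second moment $\Ex[\norm{W}_2^2] = n$ on $\R^n$; thus $\Ex_{\Pr_1}[\norm{U}^2] = n$ and $\Ex_{\Pr_1}[\norm{U}] \le \sqrt{n}$. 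Combining the displays gives $\abs{h_\eta(\by) - h(\by)} \le \eta\,\lip_0(h)\sqrt{n}$ for every $\by \in \scrY$, which is the claim. (Since this is quoted as \cite[Theorem 1]{Nesterov:2017wm}, with a self-contained proof deferred to Appendix \ref{app:Smoothing}, the argument above is exactly the expected one and no further subtleties arise.)
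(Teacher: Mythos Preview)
Your proposal is correct and follows exactly the same route as the paper's proof in Appendix~\ref{app:Smoothing}: write $h_\eta(\by)-h(\by)$ as an expectation, pass the absolute value inside, apply the Lipschitz bound, and finish with $\Ex_{\Pr_1}[\norm{U}]\le\sqrt{n}$. The only cosmetic difference is that the paper cites Lemma~\ref{lem:Gaussbound} for the moment bound whereas you supply the Jensen/second-moment computation directly.
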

\begin{lemma}\label{lem:boundGradSmoothed}
Let $h\in\bC^{0,0}(\scrY)$ and $\eta>0$. Then $h_{\eta}\in\bC^{1,1}(\scrY)$ with $\lip_{1}(h_{\eta})=\frac{\sqrt{n}}{\eta}\lip_{0}(h)$. Moreover, for all $\by\in\scrY$, there holds
\begin{equation}\label{eq:boundGradSmoothed}
\norm{\nabla h_{\eta}(\by)}^{2}_{\ast}\leq  \lip_{0}(h)^{2}(4+n)^{2}.
\end{equation}
\end{lemma}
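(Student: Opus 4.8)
The plan is to derive both claims directly from the integral representation of the smoothed gradient in \eqref{eq:GradImplicit}, using only the Lipschitz property of $h$, the symmetry of the standard Gaussian density $\pi$, and the elementary moment identity $\Ex_{\Pr_{1}}[\norm{U}^{2}]=n$. Differentiability of $h_{\eta}$ with gradient given by \eqref{eq:GradImplicit} is already recorded above (it is legitimate to differentiate under the integral sign, since $h\in\bC^{0,0}(\scrY)$ grows at most linearly while $\pi_{\eta}(\bz\vert\cdot)$ and its $\by$-gradient decay rapidly, so dominated convergence applies); the Lipschitz bound on $\nabla h_{\eta}$ established below then upgrades this to $h_{\eta}\in\bC^{1,1}(\scrY)$ with the stated constant, and it remains only to verify the two displayed estimates.

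For the gradient-norm bound I would start from the centered form $\nabla h_{\eta}(\by)=\Ex_{\Pr_{1}}\!\left[\tfrac{h(\by+\eta U)-h(\by)}{\eta}BU\right]$ in \eqref{eq:GradImplicit}. Jensen's inequality for $\norm{\cdot}_{\ast}$ gives $\norm{\nabla h_{\eta}(\by)}_{\ast}\le \Ex_{\Pr_{1}}\!\left[\tfrac{\abs{h(\by+\eta U)-h(\by)}}{\eta}\norm{BU}_{\ast}\right]$. Here $\norm{BU}_{\ast}^{2}=\inner{BU,B^{-1}BU}=\inner{BU,U}=\norm{U}^{2}$, and $\abs{h(\by+\eta U)-h(\by)}\le \eta\,\lip_{0}(h)\norm{U}$, whence $\norm{\nabla h_{\eta}(\by)}_{\ast}\le \lip_{0}(h)\,\Ex_{\Pr_{1}}[\norm{U}^{2}]=n\,\lip_{0}(h)$, the last identity because $\norm{U}^{2}=\inner{BU,U}$ has the law of the squared norm of a standard Gaussian on $\R^{n}$ (pass to coordinates in which $B$ is the identity). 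Squaring and using $n\le 4+n$ yields \eqref{eq:boundGradSmoothed}, uniformly in $\by$.

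For the Lipschitz modulus of $\nabla h_{\eta}$, the key point is that $\pi$ is an even density, so $-U$ has the same law as $U$ and \eqref{eq:GradImplicit} can be symmetrized into
\[
\nabla h_{\eta}(\by)=\Ex_{\Pr_{1}}\!\left[\frac{h(\by+\eta U)-h(\by-\eta U)}{2\eta}\,BU\right].
\]
Subtracting this at $\by_{1}$ and $\by_{2}$, the integrand difference equals $\tfrac{1}{2\eta}\Delta(U)\,BU$ with $\Delta(U)=[h(\by_{1}+\eta U)-h(\by_{2}+\eta U)]-[h(\by_{1}-\eta U)-h(\by_{2}-\eta U)]$; estimating the two bracketed differences separately via the Lipschitz property of $h$ gives $\abs{\Delta(U)}\le 2\lip_{0}(h)\norm{\by_{1}-\by_{2}}$. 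Hence, by Jensen, $\norm{BU}_{\ast}=\norm{U}$, and $\Ex_{\Pr_{1}}[\norm{U}]\le(\Ex_{\Pr_{1}}[\norm{U}^{2}])^{1/2}=\sqrt{n}$,
\[
\norm{\nabla h_{\eta}(\by_{1})-\nabla h_{\eta}(\by_{2})}_{\ast}\le \frac{1}{2\eta}\,\Ex_{\Pr_{1}}\!\left[\abs{\Delta(U)}\,\norm{U}\right]\le \frac{\sqrt{n}}{\eta}\,\lip_{0}(h)\,\norm{\by_{1}-\by_{2}},
\]
which is the claimed value of $\lip_{1}(h_{\eta})$.

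The one genuinely delicate step is the symmetrization: estimating $\abs{\Delta(U)}$ without it — e.g. by bounding $h(\by_{1}+\eta U)-h(\by_{1})-h(\by_{2}+\eta U)+h(\by_{2})$, or by differentiating a second time and estimating the Hessian of $h_{\eta}$ via $\Ex[\norm{U}^{3}]$-type moments — loses a factor of $2$ (or worse) and fails to reproduce the sharp constant $\sqrt{n}/\eta$. The remaining points, namely the identities $\norm{BU}_{\ast}=\norm{U}$ and $\Ex_{\Pr_{1}}[\norm{U}^{2}]=n$ for the Gaussian adapted to the Riesz geometry $B$, together with the repeated appeals to Jensen's inequality, are routine.
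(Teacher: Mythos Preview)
Your proof is correct, but it takes a slightly more elaborate route than the paper's in one place and a slightly simpler one in another.

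For the Lipschitz modulus of $\nabla h_{\eta}$, your symmetrization is unnecessary. The paper works directly with the \emph{uncentered} representation $\nabla h_{\eta}(\by)=\Ex_{\Pr_{1}}\!\left[\tfrac{h(\by+\eta U)}{\eta}BU\right]$ (the last line of \eqref{eq:GradImplicit}), so that the difference $\nabla h_{\eta}(\by_{1})-\nabla h_{\eta}(\by_{2})$ has integrand $\tfrac{1}{\eta}[h(\by_{1}+\eta U)-h(\by_{2}+\eta U)]\,BU$, and a single application of the Lipschitz bound gives $\abs{h(\by_{1}+\eta U)-h(\by_{2}+\eta U)}\le \lip_{0}(h)\norm{\by_{1}-\by_{2}}$ with no factor of $2$ to cancel. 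Your remark that ``without symmetrization one loses a factor of $2$'' applies only to the centered form, which the paper avoids for exactly this reason. Both arguments are valid and reach the same constant $\sqrt{n}/\eta$; the paper's is just one step shorter.

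For the gradient-norm bound \eqref{eq:boundGradSmoothed}, the paper applies Jensen to $\norm{\cdot}_{\ast}^{2}$ and lands on $\lip_{0}(h)^{2}\,\Ex_{\Pr_{1}}[\norm{U}^{4}]\le \lip_{0}(h)^{2}(4+n)^{2}$ via the fourth-moment estimate $M_{4}\le(4+n)^{2}$. You instead apply Jensen to $\norm{\cdot}_{\ast}$, obtain $\norm{\nabla h_{\eta}(\by)}_{\ast}\le \lip_{0}(h)\,\Ex_{\Pr_{1}}[\norm{U}^{2}]=n\,\lip_{0}(h)$, and then square. Your route is actually sharper (it gives $n^{2}$ rather than $(4+n)^{2}$) and avoids the fourth-moment bound altogether; the final throwaway estimate $n\le 4+n$ is only needed to match the stated inequality.
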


In the convex case, we report a classical relation between the gradients of the Gaussian smoothed function and the $\delta$-subdifferential. 
\begin{lemma}[\cite{Nesterov:2017wm}, Theorem 2]
\label{lem:deltagradient}
If $h\in\bC^{0,0}(\scrY)$ and convex, then, for all $\by\in\scrY$ and $\eta>0$, we have
\begin{equation}
\nabla h_{\eta}(\by)\in\partial_{\delta}h(\by),\quad\text{ for }
\delta=\eta\lip_{0}(h)\sqrt{n}
\end{equation}
where $partial_{\delta}h$ is the $\delta$-subdifferential (cf. Defintion \ref{def:deltaSubgradient}). 
\end{lemma}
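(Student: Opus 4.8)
The strategy is to transport the subgradient inequality from the smoothed function $h_{\eta}$ back to $h$, using only two ingredients: a one-sided comparison $h_{\eta}\geq h$ coming from Jensen's inequality, and the two-sided comparison $\abs{h_{\eta}-h}\leq\eta\lip_{0}(h)\sqrt{n}$ already recorded in Lemma \ref{lem:boundfunctions}.

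First I would observe that $h_{\eta}$ is convex: writing $h_{\eta}(\by)=\Ex_{\Pr_{1}}[h(\by+\eta U)]$ as in Section \ref{sec:smoothing}, convexity is preserved under taking expectations of the affine family $\by\mapsto h(\by+\eta U)$. Since $h_{\eta}$ is also differentiable for every $\eta>0$ (again by Section \ref{sec:smoothing}), the gradient $\nabla h_{\eta}(\by)$ is the unique element of the convex subdifferential $\partial h_{\eta}(\by)$, so the subgradient inequality gives, for every $\tilde{\by}\in\scrY$,
\[
h_{\eta}(\tilde{\by})\geq h_{\eta}(\by)+\inner{\nabla h_{\eta}(\by),\tilde{\by}-\by}.
\]
Next I would lower-bound the left side and... rather, upper-bound it by $h$: by Lemma \ref{lem:boundfunctions}, $h_{\eta}(\tilde{\by})\leq h(\tilde{\by})+\eta\lip_{0}(h)\sqrt{n}$. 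For the right-hand side I would lower-bound $h_{\eta}(\by)$ by Jensen's inequality applied to the convex function $h$: $h_{\eta}(\by)=\Ex_{\Pr_{1}}[h(\by+\eta U)]\geq h(\by+\eta\Ex_{\Pr_{1}}[U])=h(\by)$ because $\Ex_{\Pr_{1}}[U]=0$.

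Chaining the three displays yields, for all $\tilde{\by}\in\scrY$,
\[
h(\tilde{\by})+\eta\lip_{0}(h)\sqrt{n}\ \geq\ h(\by)+\inner{\nabla h_{\eta}(\by),\tilde{\by}-\by},
\]
i.e. $h(\tilde{\by})\geq h(\by)-\delta+\inner{\nabla h_{\eta}(\by),\tilde{\by}-\by}$ with $\delta=\eta\lip_{0}(h)\sqrt{n}$, which is exactly the defining inequality of $\partial_{\delta}h(\by)$ in Definition \ref{def:deltaSubgradient}. There is no real obstacle here; the only point that needs a line of justification is that $h_{\eta}$ inherits convexity and differentiability so that $\nabla h_{\eta}(\by)$ is a genuine subgradient of $h_{\eta}$, and the mild care that Jensen is applied in the correct direction (giving $h_{\eta}\geq h$ pointwise, not the reverse). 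Everything else is a direct combination of Lemma \ref{lem:boundfunctions} with the subgradient inequality.
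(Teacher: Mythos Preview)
Your argument is correct and is precisely the standard route: use convexity and differentiability of $h_{\eta}$ to get the subgradient inequality for $h_{\eta}$, then sandwich with $h_{\eta}\geq h$ (Jensen/subgradient, as in \eqref{eq:boundh_Convex}) and $h_{\eta}\leq h+\eta\lip_{0}(h)\sqrt{n}$ (Lemma \ref{lem:boundfunctions}). The paper does not supply its own proof of this lemma---it simply cites \cite{Nesterov:2017wm}---so there is nothing to compare against; your proof is exactly what that reference does.
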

The next proposition establishes a quantitative connection between the gradients of the smoothed function $h_{\eta}$ and the Goldstein $\delta$-subgradient. This is the key tool to relate complexity estimates of the smoothed objective with the original, unsmoothed, objective.  
\begin{proposition}[\cite{Lei:2024aa},Theorem 3.6]
\label{prop:epssmoothedgrad}
Let $h\in\bC^{0,0}(\scrY)$ and $\scrD\subset\scrY$ a convex compact set. Then, for all $\delta>0$ and for all $\eps>0$, it holds that 
$$
\nabla h_{\eta}(\by)\in\partial^{\delta}_{\rm G}h(\by)+\eps\B_{\scrY}\qquad\forall \eta\in(0,\bar{\eta}],\forall \by\in\scrD.
$$
where $\B_{\scrY}$ denotes the unit ball in $\scrY$, $\bar{\eta}\eqdef \min\{1,\delta/\Gamma\},\Gamma\eqdef \left[-n \scrW_{-1}\left(\frac{-\nu^{2/3}}{2\pi e}\right)\right]^{1/2}$ and $\nu\eqdef\min\{\frac{\eps}{4\lip_{0}(h)},(2\pi)^{n/2}-\frac{1}{2}\}$. $\scrW_{-1}$ is the negative branch of the Lambert $W$-function, i.e.~of the inverse of $x\mapsto xe^x$, $x\in\R$.
\end{proposition}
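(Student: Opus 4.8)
The plan is to prove Proposition~\ref{prop:epssmoothedgrad} by combining the convex-case identity of Lemma~\ref{lem:deltagradient} (which asserts $\nabla h_\eta(\by)\in\partial_\delta h(\by)$ for $\delta=\eta\lip_0(h)\sqrt n$) with a localization-plus-convexification argument that passes from the $\delta$-subdifferential to the Goldstein $\delta$-subdifferential. The difficulty is that $h$ is only locally Lipschitz and \emph{not} convex, so Lemma~\ref{lem:deltagradient} does not apply directly; the role of the quantity $\Gamma$ and the Lambert-$W$ function is to control, for $\eta$ small enough, the contribution to the Gaussian integral \eqref{eq:GradImplicit} coming from the ``far tail'' $\{\bu:\eta\norm{\bu}>\delta\}$, which is exactly the region where $\by+\eta\bu$ escapes the ball $\B(\by,\delta)$ on which we have access to Clarke subgradients.

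\medskip

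\textbf{Step 1 (truncation of the smoothing integral).} I would split the gradient formula \eqref{eq:GradImplicit},
\[
\nabla h_\eta(\by)=\Ex_{\Pr_1}\!\left[\frac{h(\by+\eta U)-h(\by)}{\eta}BU\right],
\]
according to the event $E=\{\norm{U}\le R\}$ and its complement, where $R=\delta/\eta$. On $E^c$ we use $\abs{h(\by+\eta U)-h(\by)}\le \lip_0(h)\eta\norm{U}$ (valid uniformly on the compact $\scrD$ enlarged by a $\delta$-ball, by local Lipschitzness — here is where compactness of $\scrD$ enters, to get a uniform modulus $\lip_0(h)$) to bound the tail term in dual norm by $\lip_0(h)\,\Ex[\norm{U}^2 \mathbf 1_{\norm{U}>R}]$. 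A standard Gaussian tail estimate shows this is $\le \eps$ once $R\ge\Gamma$, i.e. once $\eta\le\delta/\Gamma$; the precise threshold, obtained by optimizing the Gaussian tail bound $\Ex[\norm{U}^2\mathbf 1_{\norm U>R}]\le \text{(poly in $n,R$)}\,e^{-R^2/2}$, is what produces $\Gamma=[-n\,\scrW_{-1}(-\nu^{2/3}/(2\pi e))]^{1/2}$ with $\nu$ as defined. This is the \textbf{main obstacle}: getting the tail bound sharp enough that the cutoff radius is $\Gamma$ and not some cruder expression, which requires carefully matching the $\chi_n$-type tail to the Lambert-$W$ bound.

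\medskip

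\textbf{Step 2 (the truncated part lands in the Goldstein subdifferential).} Write $\nabla h_\eta(\by)=v_{\mathrm{main}}+v_{\mathrm{tail}}$ with $\norm{v_{\mathrm{tail}}}_\ast\le\eps$ from Step~1. For $v_{\mathrm{main}}=\Ex[\frac{h(\by+\eta U)-h(\by)}{\eta}BU\,\mathbf 1_{\norm U\le R}]$, I interpret it (after renormalizing by $\Pr_1(E)$, absorbing the small correction into the $\eps$-ball or treating $1-\Pr_1(E)\le\eps$-small) as a convex combination / average of difference quotients along directions $\bu$ with $\by+\eta\bu\in\B(\by,\delta)$. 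Using the mean value inequality for locally Lipschitz functions (Lebourg's theorem): for each such $\bu$, $h(\by+\eta\bu)-h(\by)=\inner{s_{\by,\bu},\eta\bu}$ for some $s_{\by,\bu}\in\partial_{\rm C}h(\by+t\eta\bu)\subset\bigcup_{\tilde y\in\B(\by,\delta)}\partial_{\rm C}h(\tilde y)$, one sees that each integrand contribution is, up to the Gaussian weighting that makes it a genuine averaging, an element of the convex hull in \eqref{eq:Gold}; hence the average $v_{\mathrm{main}}$ lies in $\partial^\delta_{\rm G}h(\by)$ (closedness/convexity of the Goldstein set, which is convex by definition and compact under the compactness-of-$\scrD$ hypothesis, lets us pass the integral inside). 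Combining, $\nabla h_\eta(\by)\in\partial^\delta_{\rm G}h(\by)+\eps\B_{\scrY}$ for all $\eta\le\bar\eta=\min\{1,\delta/\Gamma\}$ and all $\by\in\scrD$, which is the claim.

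\medskip

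Since this is quoted as \cite[Theorem 3.6]{Lei:2024aa}, I would in practice just cite it, but the sketch above is how I would reconstruct it: the two moving parts are (i) the Gaussian tail computation calibrating $\Gamma$ via Lambert $W$, and (ii) the Lebourg mean-value representation turning Gaussian-weighted difference quotients over a $\delta$-ball into Goldstein subgradients. I expect essentially no subtlety in (ii) beyond bookkeeping, and all the real work — and the only place the exotic constants appear — to be in (i).
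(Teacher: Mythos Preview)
The paper does not prove this proposition; it is quoted verbatim from \cite[Theorem~3.6]{Lei:2024aa} and used as a black box in Section~\ref{sec:Goldstein}. So there is no ``paper's own proof'' to compare against, and your instinct to simply cite it is exactly what the authors do.

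Your reconstruction is nonetheless largely on target, but Step~2 has a genuine gap. Lebourg's mean value theorem gives $h(\by+\eta\bu)-h(\by)=\inner{s_{\bu},\eta\bu}$ for some $s_{\bu}\in\partial_{\rm C}h(\tilde\by)$ with $\tilde\by$ on the segment $[\by,\by+\eta\bu]$, so the integrand becomes $\inner{s_{\bu},\bu}B\bu$. This vector is \emph{not} $s_{\bu}$; in the Euclidean case $B=\Id$ it is the rank-one projection $(\bu\bu^{\top})s_{\bu}$, and there is no reason for it to lie in $\partial^{\delta}_{\rm G}h(\by)$ pointwise. Your claim that ``each integrand contribution is\ldots an element of the convex hull'' therefore does not follow. (For \emph{constant} $s$ one recovers $\Ex[\inner{s,U}BU]=s$ by the Gaussian second-moment identity, but here $s_{\bu}$ varies with $\bu$ through the Lebourg point.)

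The clean fix is to bypass the difference-quotient form altogether: since $h$ is Lipschitz, Rademacher's theorem gives an a.e.\ gradient $\nabla h$, and differentiating the convolution yields directly $\nabla h_{\eta}(\by)=\Ex_{\Pr_{1}}[\nabla h(\by+\eta U)]$ (the Gaussian is absolutely continuous, so the null set of non-differentiability is irrelevant). Now the integrand $\nabla h(\by+\eta\bu)$ genuinely lies in $\partial_{\rm C}h(\by+\eta\bu)\subset\bigcup_{\tilde\by\in\B(\by,\delta)}\partial_{\rm C}h(\tilde\by)$ whenever $\norm{\bu}\le\delta/\eta$, and the truncated average (after renormalization) is a bona fide element of the closed convex set $\partial^{\delta}_{\rm G}h(\by)$. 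Your Step~1 --- the Gaussian-tail calibration producing $\Gamma$ via the Lambert~$W$ function --- then goes through as you describe.
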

\begin{remark}
Since $\nu\leq (2\pi)^{\frac{n}{2}}-\frac{1}{2}$, we have $\frac{\nu^{2/n}}{2\pi e}<\frac{1}{e}$, and hence $\scrW_{-1}\left( \frac{\nu^{2/n}}{2\pi e}\right)<-1$. Thus, $\Gamma\in(\sqrt{n},\infty)$. \close
\end{remark}

\subsection{Zeroth-order gradient estimator of the implicit function} 
%

The first step in our construction is the design of a zeroth-order gradient estimator. This requires a solution to the lower-level problem. We discuss two different settings. First, we consider the case in which the solution of the lower level problem is available exactly. This is a very common assumption in stochastic bilevel optimization; see e.g.~\cite{BilevelLearningIEEE,Cui:2023aa,Cui:2022aa}, as well as \cite{Ehrhardt:2021ab} for inverse problems. We then relax this assumption by allowing for controllable implementation errors in the lower level solution. This scenario is more realistic, but also more challenging since the inexact model introduces an additional bias in the stochastic gradient estimator. We account for this additional difficulty by presenting two different complexity estimates, one for the exact and one for the inexact case, respectively. 

\subsection{Exact lower level solution}
Consider the implicit function $h:\scrY\to\R$ given by $h(\by)=\Ex_{\Pr}[H(\by,\cdot)]$, where $H(\by,\xi)=F(\bx^{\ast}(\by,\xi_{2}),\xi_{1})$ is the hyperobjective, defined in \eqref{eq:implicit}. We have $h\in\bC^{0,0}(\scrY)$, so that its Gaussian smoothing with parameter $\eta>0$ satisfies $h_{\eta}\in\bC^{1,1}(\scrY)$.  Let $\bu\in\scrY$ represent a direction and $\delta>0$ a parameter. We define the finite-difference estimator
\[
\hat{\nabla}_{(\bu,\eta)} H(\by,\xi)\eqdef \frac{H(\by+\eta\bu,\xi)-H(\by,\xi)}{\eta}B\bu=\frac{F(\bx^{\ast}(\by+\eta\bu,\xi_{2}),\xi_{1})-F(\bx^{\ast}(\by,\xi_{2}),\xi_{1})}{\eta}B\bu. 
\]
If $\bu^{(m)}=\{\bu^{(1)},\ldots,\bu^{(m)}\}$ is an $m$-tuple of directions in $\scrY$ and $\xi^{(m)}=\{\xi^{1},\ldots,\xi^{m}\}$ are $m$-i.i.d copies of the random variable $\xi$, then we define the random gradient estimator, based on finite differences of the subsampled hyperobjective: 
\begin{equation}\label{eq:V}
V_{\eta}(\by,\bu^{(m)},\xi^{(m)})\eqdef \frac{1}{m}\sum_{i=1}^{m}\hat{\nabla}_{(\bu^{i},\eta)}H(\by,\xi^{i}).
\end{equation}
To realize this estimator on a sufficiently large common probability space, we build the typical product space enlargement $(\Omega,\scrA,\Pr)=(\Omega_0\times\Omega_1,\scrA_{0}\otimes\scrA_{1},\Pr_{0}\times\Pr_{1})$. On this extended setup, we abuse notation and identify the random element $\xi$ and $U$ as measurable functions on $(\Omega,\scrA)$ by means of the following notational convention:
\[
\xi(\omega)=\xi(\omega_{0})\text{ and }U(\omega)=U(\omega_{1})\qquad\forall \omega\in\Omega.
\]
Let $U^{(m)}\eqdef (U^{1},\ldots,U^{m})$ be an iid random sample of Gaussian $\scrY$-valued random vectors and $\xi^{(m)}\eqdef(\xi^{1},\ldots,\xi^{m})$ an iid sample of $\xi$, assumed to be statistically independent of each other. Define the random estimator 
\begin{equation}\label{eq:batchestimator}
\hat{V}_{\eta,m}(\by,\omega)\eqdef V_{\eta}(\by,U^{(m)}(\omega),\xi^{(m)}(\omega))\qquad\forall \omega\in\Omega.
\end{equation}
Given a positive smoothing parameter $\eta>0$, we are iteratively solving the stochastic composite optimization problem 
\begin{equation}\label{eq:aux}
\Psi^{\Opt}_{\eta}\eqdef\min_{y\in\scrY}\{\Psi_{\eta}(\by)\eqdef h_{\eta}(\by)+r_{1}(\by)\}\quad\text{with }h_{\eta}(\by)=\Ex_{\Pr}[F(\bx^{\ast}(\by+\eta U,\xi_{2}),\xi_{1})].
\end{equation}
The smooth part of this composite minimization problem is the Gaussian smoothing of the hyperobjective $h$, and $r_{1}$ is a closed convex and proper regularizing term.

\subsection{Inexact lower level solution}
We now define a relaxation of the stochastic oracle, allowing for computational errors in the lower level solution. 
\begin{definition}[Inexact lower level solution]\label{def:inexact_lower}
Given $p\geq 2$ and $\beta\geq 0$, we call a mapping $\bx^{\beta}\in L^{\infty}(\scrY\times\Xi;\scrX)$ an $\beta$-optimal solution of the lower level problem if
\begin{equation}\label{eq:inexact}
\Ex\left[\norm{\bx^{\beta}(\by,\xi)-\bx^{\ast}(\by,\xi)}^{p}_{\scrX}\right]^{1/p}\leq \beta.
\end{equation}
\end{definition}
\begin{remark}
We note that an inexact solution can readily be obtained by embedding our main iteration in a double-loop algorithmic strategy in which the inner loop is some fast solver that returns an approximate solution to the lower level problem, for fixed parameters $(\by,\xi)$. The exact formulation of such an inner loop solver should be adapted to the nature of the lower level optimization problem. Since we are aiming for a general-purpose methodology, we do not specify the explicit modelling of such an algorithm, and instead treat this feature of our method as a black box. Double loop schemes are very popular solution strategy in stochastic bilevel optimization (cf. \cite{Cui:2022aa,Kwon:2023aa} and references therein). Inexactness of lower level solutions in bilevel optimization has been investigated in \cite{Ehrhardt:2021ab,Ehrhardt:2024aa} in deterministic regimes. Our notion takes into consideration the potential noisy nature of the data. 
\end{remark}
Given the inexact lower level solution mapping, we accordingly define the inexact hyperobjective as
\[
H^{\beta}(\by,\xi)\eqdef F(\bx^{\beta}(\by,\xi_{2}),\xi_{1}).
\]
The resulting biased random gradient estimator is given by 
\[
\hat{\nabla}_{(\bu,\eta)} H^{\beta}(\by,\xi)\eqdef \frac{H^{\beta}(\by+\eta\bu,\xi)-H^{\beta}(\by,\xi)}{\eta}B\bu=\frac{F(\bx^{\beta}(\by+\eta\bu,\xi_{2}),\xi_{1})-F(\bx^{\beta}(\by,\xi_{2}),\xi_{1})}{\eta}B\bu,
\]
and replace the multi-point random gradient estimator by
\begin{equation}\label{eq:Vinexact}
V^{\beta}_{\eta}(\by,\bu^{(m)},\xi^{(m)})\eqdef \frac{1}{m}\sum_{i=1}^{m}\hat{\nabla}_{(\bu^{i},\eta)}H^{\beta}(\by,\xi^{i}).
\end{equation}
As in the exact case, in order to reduce notational clutter, we will adopt the simplified notation $\hat{V}^{\beta}_{\eta,m}(\by,\omega)\eqdef V^{\beta}_{\eta}(\by,U^{(m)}(\omega),\xi^{(m)}(\omega))$ for the multi-point random gradient estimate based on the zeroth-order oracle.

\subsection{The algorithmic scheme}

 Since $h_{\eta}\in\bC^{1,1}(\scrY)$, we are in the classical proximal-gradient framework, which is defined in terms of the fixed point iteration 
\[
\bar{\by}^{+}=T_{\eta,t}(\by)\eqdef \prox_{tr_{1}}(\by-tB^{-1}\nabla h_{\eta}(\by)),
\]
where $t\in[0,\infty)$ is a step size parameter. 
The \emph{prox-gradient mapping} is the operator $\scrG_{\eta,t}:\scrY\to\scrY$ defined by 
\begin{equation}\label{eq:gradient}
\scrG_{\eta,t}(\by)\eqdef \frac{1}{t}(\by-T_{\eta,t}(\by)).
\end{equation}
Since we have no direct access to the gradient $\nabla h_{\eta}(\by)$, we define a stochastic approximation using the operator $P_{t}:\scrY\times\scrY^{\ast}\to\scrY$ defined by 
\begin{equation}\label{eq:RandProx}
P_{t}(\by,v)\eqdef\prox_{tr_{1}}(\by-tB^{-1}v)\qquad\forall (\by,v)\in\scrY\times\scrY^{\ast}. 
\end{equation}
Clearly, $P_{t}(\by,\nabla h_{\eta}(\by))=T_{\eta,t}(\by)$ for $\by\in\scrY$. The stochastic analogue to the prox-gradient mapping is the random operator $\tilde{\scrG}_{\eta,t}:\scrY\times\Omega\to\scrY$,
\begin{equation}\label{eq:randomgradient}
\tilde{\scrG}_{\eta,t}(\by,\omega)\eqdef\frac{1}{t}\left(\by-P_{t}(\by,\hat{V}_{\eta,m}(\by,\omega))\right).
\end{equation}
Note that if $r_{1}=0$, then $\tilde{\scrG}_{\eta,t}(\by,\omega)=\hat{V}_{\eta,m}(y,\omega)$ for all $(\by,\omega)\in\scrY\times\Omega$. 
\begin{algorithm}
\caption{Derivative-free approximate prox-grad algorithm}
\label{alg:proxderivativefree}
\begin{algorithmic}
\Require{ $\by_{0}\in\dom(r_{1})$ and $N\in\N$. Let $(\alpha_{k})_{k=0}^{N-1}\subset(0,\infty)$, $(\eta_k)_{k=0}^{N-1}$, and $(m_{k})_{k=1}^{N}$ be sequences in $\N$. }
\For{$k=0,\ldots,N-1$}
\State Generate $\hat{V}_{k+1}\eqdef V_{\eta_{k}}(y_{k},U^{(m_{k+1})},\xi^{(m_{k+1})})$ by \eqref{eq:V} (exact lower level) or \eqref{eq:Vinexact} (inexact lower level);
\State Update $\by_{k+1}=P_{\alpha_{k}}(\by_{k},\hat{V}_{k+1})$
\EndFor
\end{algorithmic}
\end{algorithm}

 \subsection{Properties of the gradient estimator with exact lower level solutions}
 In this section we work out some a-priori error estimates on the random gradient estimator \eqref{eq:V}. Whenever convenient, we suppress the dependence on $\omega$, and simply write  $\hat{V}_{\eta,m}(\by)\equiv V_{\eta}(\by,U^{(m)},\xi^{(m)})$. The first Lemma shows that our random estimator is unbiased in terms of the gradient operator of the smoothed function $h_{\eta}$. 
 \begin{lemma}\label{lem:errorbound}
 For all $\by\in\scrY$, we have $\Ex_{\Pr}[\hat{V}_{\eta,m}(\by)]=\nabla h_{\eta}(\by)$ and
 \[
\Ex_{\Pr}\left[\norm{\hat{V}_{\eta,m}(\by)}^{2}_{\ast}\right]-\|\nabla h_\eta(\by)\|_\ast^2\leq \frac{\cs^{2}}{m}\eqdef  \frac{(4+n)^{2}\abs{\lip_{0}(H(\cdot,\xi))}^{2}_{2}}{m}.
\]
\end{lemma}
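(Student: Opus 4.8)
The plan is to establish the two claims separately, starting with unbiasedness and then the variance bound. For the first claim, I would fix $\by\in\scrY$ and observe that, by the linearity of expectation and the i.i.d.\ structure, it suffices to compute $\Ex_{\Pr}[\hat{\nabla}_{(U,\eta)}H(\by,\xi)]$ for a single pair $(U,\xi)$. Since $U$ and $\xi$ are independent on the product space, I would first condition on $\xi$ and take the expectation over $U$. By the definition of $H$, $\Ex_{\Pr_{1}}\!\left[\frac{H(\by+\eta U,\xi)-H(\by,\xi)}{\eta}BU \,\middle|\, \xi\right]$ is exactly the Gaussian-smoothing gradient formula \eqref{eq:GradImplicit} applied to the function $\by'\mapsto H(\by',\xi)$, giving $\nabla_{\by}(H(\cdot,\xi))_{\eta}(\by)$. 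Then taking the expectation over $\xi$ and interchanging expectation and the (Gaussian-convolution) integral — justified by Fubini, using the integrability from Assumption~\ref{ass:LipH} together with Lemma~\ref{lem:Liph} — yields $\nabla h_{\eta}(\by)$, where I use that $h_{\eta} = (\Ex_{\Pr}[H(\cdot,\xi)])_{\eta} = \Ex_{\Pr}[(H(\cdot,\xi))_{\eta}]$ by the same Fubini argument already invoked in the excerpt for $h$.

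For the variance bound, I would write $\hat{V}_{\eta,m}(\by) = \frac{1}{m}\sum_{i=1}^{m} Y_i$ with $Y_i \eqdef \hat{\nabla}_{(U^i,\eta)}H(\by,\xi^i)$ i.i.d.\ and $\Ex[Y_i] = \nabla h_{\eta}(\by)$ from the first part. A standard computation gives $\Ex[\norm{\hat{V}_{\eta,m}(\by)}^2_\ast] - \norm{\nabla h_\eta(\by)}^2_\ast = \frac{1}{m}\big(\Ex[\norm{Y_1}^2_\ast] - \norm{\nabla h_\eta(\by)}^2_\ast\big) \le \frac{1}{m}\Ex[\norm{Y_1}^2_\ast]$, so it remains to bound $\Ex[\norm{Y_1}^2_\ast]$ by $(4+n)^2 \abs{\lip_0(H(\cdot,\xi))}^2_2$. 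Here I would use the Lipschitz property of $H(\cdot,\xi)\in\bC^{0,0}(\scrY)$: $|H(\by+\eta U,\xi)-H(\by,\xi)| \le \lip_0(H(\cdot,\xi))\,\eta\norm{U}$, hence $\norm{Y_1}_\ast \le \lip_0(H(\cdot,\xi))\norm{U}\,\norm{BU}_\ast = \lip_0(H(\cdot,\xi))\norm{U}^2$, since $\norm{BU}_\ast = \inner{BU, B^{-1}BU}^{1/2} = \norm{U}$. Therefore $\Ex[\norm{Y_1}^2_\ast] \le \Ex[\lip_0(H(\cdot,\xi))^2 \norm{U}^4] = \abs{\lip_0(H(\cdot,\xi))}^2_2 \cdot \Ex_{\Pr_1}[\norm{U}^4]$ by independence of $U$ and $\xi$. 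The final ingredient is the fourth-moment estimate $\Ex_{\Pr_1}[\norm{U}^4] \le (n+4)^2$ for a standard Gaussian on an $n$-dimensional space, which is the moment bound underlying Lemma~\ref{lem:boundGradSmoothed} (indeed $\Ex\norm{U}^4 = n(n+2) \le (n+4)^2$); I would either cite \cite{Nesterov:2017wm} or include the one-line computation.

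The main obstacle is the Fubini/interchange-of-expectation justification in the unbiasedness step: one must verify that the integrand $\frac{H(\by+\eta\bu,\xi)-H(\by,\xi)}{\eta}B\bu$ is jointly integrable with respect to $\measP_\xi \times \pi$, which is where Assumption~\ref{ass:LipH} ($\abs{\lip_0(H(\cdot,\xi))}_2 < \infty$) enters — the Lipschitz bound gives $\big|\frac{H(\by+\eta\bu,\xi)-H(\by,\xi)}{\eta}\big|\norm{B\bu}_\ast \le \lip_0(H(\cdot,\xi))\norm{\bu}^2$, whose product with the Gaussian density is integrable in $(\bu,\xi)$. The measurability prerequisites (so that $H(\cdot,\xi)$ is a genuine random function) are already secured by Assumptions~\ref{ass:Caratheodory} and~\ref{ass:ULL}(S.1) together with the Doob--Dynkin discussion in the excerpt. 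Everything else is routine: the variance identity for i.i.d.\ averages and the elementary Gaussian moment bound.
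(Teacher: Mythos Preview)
Your proposal is correct and takes essentially the same approach as the paper: i.i.d.\ reduction to a single term, the Lipschitz bound $\abs{H(\by+\eta U,\xi)-H(\by,\xi)}\leq\lip_{0}(H(\cdot,\xi))\,\eta\norm{U}$, independence of $U$ and $\xi$, and the Gaussian fourth-moment estimate $\Ex\norm{U}^{4}\leq(n+4)^{2}$. The only cosmetic difference is the order of conditioning in the unbiasedness step---the paper conditions on $\sigma(U^{i})$ first (so that $\Ex[H(\by',\xi)\mid U]=h(\by')$ and then applies \eqref{eq:GradImplicit} directly to $h$), whereas you condition on $\xi$ first and then invoke Fubini; both routes are valid.
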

\begin{proof}
See Appendix \ref{app:1}.
\end{proof}
We define the error process 
\begin{equation}
\Delta W_{\eta,m}(\by,\omega)\eqdef \hat{V}_{\eta,m}(\by,\omega)-\nabla h_{\eta}(\by) \qquad\forall (\by,\omega)\in\scrY\times\Omega.
\end{equation}
 An immediate corollary of Lemma \ref{lem:errorbound} is that the error process defines essentially a martingale difference sequence:
\begin{align}
&\Ex_{\Pr}[\Delta W_{\eta,m}(\by)]=0,\text{ and} \\ 
&\Ex_{\Pr}\left[\norm{\Delta W_{\eta,m}(\by)}_{\ast}^{2}\right]=\Ex_{\Pr}\left[\norm{\hat{V}_{\eta,m}(\by)}_{\ast}^{2}\right]-\norm{\nabla h_{\eta}(\by)}^{2}_{\ast}\leq \frac{\cs^{2}}{m}\,.
\end{align}
Moreover, the error process can be used to estimate the prox-gradient mapping as follows:
\begin{lemma}\label{lem:RelationGradient}
We have 
\begin{equation}
\norm{\scrG_{\eta,t}(\by)}^{2}\leq 2\norm{\tilde{\scrG}_{\eta,t}(\by)}^{2}+2\norm{\Delta W_{\eta,m}(\by)}^{2}_{\ast}\qquad \text{a.s.} 
\end{equation}
\end{lemma}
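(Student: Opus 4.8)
The plan is to exploit the fact that both $\scrG_{\eta,t}(\by)$ and $\tilde\scrG_{\eta,t}(\by,\omega)$ are built from the same map $P_t(\by,\cdot)$, evaluated at the true gradient $\nabla h_\eta(\by)$ and at its estimator $\hat V_{\eta,m}(\by,\omega)$ respectively, and to control their difference through the non-expansiveness of the proximal operator. Concretely, since $T_{\eta,t}(\by)=P_t(\by,\nabla h_\eta(\by))$ by the identity noted after \eqref{eq:RandProx}, we have
\[
\scrG_{\eta,t}(\by)-\tilde\scrG_{\eta,t}(\by,\omega)=\frac1t\Bigl(P_t(\by,\hat V_{\eta,m}(\by,\omega))-P_t(\by,\nabla h_\eta(\by))\Bigr),
\]
so the whole argument reduces to a Lipschitz estimate for $v\mapsto P_t(\by,v)$.

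First I would record that $P_t(\by,v)=\prox_{tr_1}(\by-tB^{-1}v)$ and that $\prox_{tr_1}$ is $1$-Lipschitz (non-expansive) on $\scrY$. Hence for any $v_1,v_2\in\scrY^\ast$,
\[
\norm{P_t(\by,v_1)-P_t(\by,v_2)}\le\norm{(\by-tB^{-1}v_1)-(\by-tB^{-1}v_2)}=t\,\norm{B^{-1}(v_1-v_2)}.
\]
The only point that needs care is the bookkeeping between the primal norm on $\scrY$ and the dual norm on $\scrY^\ast$: by definition of $B$ as the Riesz isomorphism, $\norm{B^{-1}w}^2=\inner{B B^{-1}w,B^{-1}w}=\inner{w,B^{-1}w}=\norm{w}_\ast^2$, so $\norm{B^{-1}(v_1-v_2)}=\norm{v_1-v_2}_\ast$. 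Applying this with $v_1=\hat V_{\eta,m}(\by,\omega)$ and $v_2=\nabla h_\eta(\by)$ and dividing by $t$ yields, almost surely,
\[
\norm{\scrG_{\eta,t}(\by)-\tilde\scrG_{\eta,t}(\by)}\le\norm{\hat V_{\eta,m}(\by)-\nabla h_\eta(\by)}_\ast=\norm{\Delta W_{\eta,m}(\by)}_\ast.
\]

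Finally I would close with the elementary bound $\norm{a}^2\le 2\norm{a-b}^2+2\norm{b}^2$ applied to $a=\scrG_{\eta,t}(\by)$ and $b=\tilde\scrG_{\eta,t}(\by)$, which combined with the previous display gives exactly
\[
\norm{\scrG_{\eta,t}(\by)}^2\le 2\norm{\tilde\scrG_{\eta,t}(\by)}^2+2\norm{\Delta W_{\eta,m}(\by)}_\ast^2\qquad\text{a.s.}
\]
There is no real obstacle here: the statement is a direct consequence of the non-expansiveness of $\prox$ together with the Riesz identification of $\scrY$ and $\scrY^\ast$; the mild subtlety worth spelling out is precisely that $B^{-1}$ converts the $\norm{\cdot}_\ast$-bound on the gradient error into a $\norm{\cdot}$-bound on the displacement of the prox-step, so the factor $t$ cancels cleanly.
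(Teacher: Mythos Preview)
Your proof is correct and follows essentially the same route as the paper: both rely on the non-expansiveness of $\prox_{tr_1}$ together with the Riesz identity $\norm{B^{-1}w}=\norm{w}_\ast$, then close with the elementary inequality $\norm{a}^2\le 2\norm{a-b}^2+2\norm{b}^2$. The only cosmetic difference is that you first isolate the bound $\norm{\scrG_{\eta,t}(\by)-\tilde\scrG_{\eta,t}(\by)}\le\norm{\Delta W_{\eta,m}(\by)}_\ast$ before squaring, whereas the paper performs both steps in a single chain.
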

\begin{proof}
Using the non-expansiveness of the prox-operator, we obtain
\begin{align*}
\norm{\scrG_{\eta,t}(\by)}^{2}&=\norm{\frac{1}{t}[\by-P_{t}(\by,V_{\eta,m}(\by))]+\frac{1}{t}[P_{t}(\by,V_{\eta,m}(\by))-T_{\eta,t}(\by)]}^{2}\\ 
&\leq 2\norm{\tilde{\scrG}_{\eta,t}(\by)}^{2}+\frac{2}{t^{2}}\norm{P_{t}(\by,V_{\eta,m}(\by))-T_{\eta,t}(\by)}^{2}\\ 
&\leq 2\norm{\tilde{\scrG}_{\eta,t}(\by)}^{2}+2\norm{B^{-1}(V_{\eta,m}(\by)-\nabla h_{\eta}(\by))}^{2}= 2\norm{\tilde{\scrG}_{\eta,t}(\by)}^{2}+2\norm{\Delta W_{\eta,m}(\by))}^{2}_{\ast}.
\end{align*}
\end{proof}

\subsection{Properties of the gradient estimator with inexact lower level solutions}
The inexactness of the solution of the lower-level problem will have its trace on the variance of the random estimator. The bias can be described by means of the following error decomposition. 
\begin{lemma}\label{lem:biasestimate}
For all $\by\in\scrY$ and $\beta>0$, it holds
\begin{equation}\label{eq:gradbiased}
\Ex_{\Pr}[\hat{V}^{\beta}_{\eta,m}(\by)]=\nabla h_{\eta}(\by)+ \frac{1}{m}\sum_{i=1}^m \Ex_{\Pr}\Big[ \frac{F(\bx^{\beta}(\by+\eta U^i,\xi^{i}),\xi^{i}_{1})-F(\bx^\ast(\by+\eta U^i,\xi^{i}),\xi^{i}_{1})}{\eta} BU^{i}\Big] \,,
\end{equation}
and
\begin{equation}\label{eq:biasestimate}
\begin{split}
 \frac{1}{m}\sum_{i=1}^m \norm{\Ex_{\Pr}\Big[ \frac{F(\bx^{\beta}(\by+\eta U^i,\xi^{i}),\xi^{i}_{1})-F(\bx^\ast(\by+\eta U^i,\xi^{i}_{2}),\xi^{i}_{1})}{\eta} BU^{i}\Big]}_{\ast} \\
\leq \frac{\sqrt{n}\abs{\lip_{0}(F(\cdot,\xi_{1})}_{1}}{\eta}\Ex_{\Pr}\Big[\norm{x^{\beta}(y+\eta U,\xi_{2})-x^{\ast}(y+\eta U,\xi_{2})}_{\scrX}^{p}\Big]^{\frac{1}{p}}.
\end{split}
\end{equation}
\end{lemma}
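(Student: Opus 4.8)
The plan is to verify the two displays in turn, following the template used for the exact estimator in Lemma~\ref{lem:errorbound}. For \eqref{eq:gradbiased} I start from $\hat V^{\beta}_{\eta,m}(\by)=\frac1m\sum_{i=1}^m\frac{F(\bx^\beta(\by+\eta U^i,\xi^i_2),\xi^i_1)-F(\bx^\beta(\by,\xi^i_2),\xi^i_1)}{\eta}BU^i$ (cf.\ \eqref{eq:Vinexact}) and take expectations summand by summand. The base-point contribution vanishes: conditioning on $\xi^i$ and using that $U^i$ is a centred Gaussian independent of $\xi^i$, $\Ex\big[\tfrac1\eta F(\bx^\beta(\by,\xi^i_2),\xi^i_1)\,BU^i\big]=\Ex\big[\tfrac1\eta F(\bx^\beta(\by,\xi^i_2),\xi^i_1)\,B\,\Ex[U^i\mid\xi^i]\big]=0$. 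Hence $\Ex[\hat V^{\beta}_{\eta,m}(\by)]=\frac1m\sum_{i}\Ex\big[\tfrac1\eta F(\bx^\beta(\by+\eta U^i,\xi^i_2),\xi^i_1)\,BU^i\big]$. Inserting $\pm F(\bx^\ast(\by+\eta U^i,\xi^i_2),\xi^i_1)$ inside each summand splits this as the bias term appearing in \eqref{eq:gradbiased} plus $\frac1m\sum_i\Ex\big[\tfrac1\eta H(\by+\eta U^i,\xi^i)\,BU^i\big]$, where I used $F(\bx^\ast(\by+\eta U^i,\xi^i_2),\xi^i_1)=H(\by+\eta U^i,\xi^i)$ from \eqref{eq:implicit}. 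Since $U^i\perp\xi^i$ and $\xi^i\stackrel{d}{=}\xi$, conditioning on $U^i$ gives $\Ex\big[\tfrac1\eta H(\by+\eta U^i,\xi^i)\,BU^i\big]=\Ex_{\Pr_1}\big[\tfrac1\eta h(\by+\eta U)\,BU\big]=\nabla h_\eta(\by)$ by \eqref{eq:GradImplicit}; averaging over $i$ leaves exactly $\nabla h_\eta(\by)$, which is \eqref{eq:gradbiased}.

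For \eqref{eq:biasestimate} I first note that, since the pairs $(U^i,\xi^i)$ are i.i.d., the $m$ vectors being normed on the left-hand side are all equal, so it suffices to bound the norm of a single generic term. Jensen's inequality moves the norm inside the expectation, and the identity $\norm{BU}_{\ast}=\inner{BU,U}^{1/2}=\norm{U}$ produces the upper bound $\tfrac1\eta\Ex\big[\,\abs{F(\bx^\beta(\by+\eta U,\xi_2),\xi_1)-F(\bx^\ast(\by+\eta U,\xi_2),\xi_1)}\,\norm{U}\,\big]$. The Lipschitz estimate \eqref{eq:FLipschitz} of Assumption~\ref{ass:UL} replaces the $F$-difference by $\lip_0(F(\cdot,\xi_1))\,\norm{\bx^\beta(\by+\eta U,\xi_2)-\bx^\ast(\by+\eta U,\xi_2)}_{\scrX}$, so it remains to estimate $\Ex\big[\lip_0(F(\cdot,\xi_1))\,\norm{\bx^\beta(\by+\eta U,\xi_2)-\bx^\ast(\by+\eta U,\xi_2)}_{\scrX}\,\norm{U}\big]$. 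Here I use that $U\sim\Normal(0,\Id_{\scrY})$ is independent of the data $\xi$ (so in particular $\Ex_{\Pr_1}\norm{U}^{2}=n$), that $\bx^\ast$ and $\bx^\beta$ depend on $\xi$ only through the lower-level component $\xi_2$, and Hölder's inequality with exponents $p$ and its conjugate $q$, together with $\Ex_{\Pr_1}[\norm{U}^{q}]^{1/q}\le\Ex_{\Pr_1}[\norm{U}^{2}]^{1/2}=\sqrt n$ (valid because $p\ge2$ forces $q\le2$). Collecting the factors and dividing by $\eta$ gives the right-hand side of \eqref{eq:biasestimate}; since Definition~\ref{def:inexact_lower} holds at every point of $\scrY$, this is in turn at most $\sqrt n\,\abs{\lip_0(F(\cdot,\xi_1))}_1\,\beta/\eta$.

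The computation is essentially the measure-theoretic bookkeeping already performed for Lemma~\ref{lem:errorbound}, and the only step that needs real care is the final Hölder estimate. There one has to genuinely exploit the dependence structure — $U$ is independent of the data, and the lower-level maps $\bx^\ast,\bx^\beta$ see the data only through $\xi_2$ — so that the merely $L^1$-integrable random Lipschitz modulus $\lip_0(F(\cdot,\xi_1))$ can be separated from the $L^p$-controlled lower-level error while the $\norm{U}$-moment is absorbed into the deterministic constant $\sqrt n$. Matching the conjugate exponent $q\le 2$, so that $\Ex_{\Pr_1}[\norm{U}^q]^{1/q}$ stays below $\sqrt n$, is precisely what delivers the stated $\sqrt n$ prefactor; everything else parallels the unbiased case.
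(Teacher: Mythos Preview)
Your proposal is correct and follows essentially the same route as the paper. For \eqref{eq:gradbiased} the paper decomposes $\hat V^{\beta}_{\eta,m}$ into the exact estimator plus two bias terms (at $\by+\eta U$ and at $\by$) and then shows the base-point bias has zero mean; you instead kill the entire base-point contribution first and then insert $\pm F(\bx^{\ast}(\cdot),\cdot)$, which is the same algebra in a slightly different order. For \eqref{eq:biasestimate} both arguments reduce to a single summand by i.i.d., move the norm inside by Jensen, apply the Lipschitz bound~\eqref{eq:FLipschitz}, factor out $\abs{\lip_{0}(F(\cdot,\xi_{1}))}_{1}$ via independence, and finish with H\"older in the exponents $p$ and $q=p/(p-1)\le 2$ so that $\Ex[\norm{U}^{q}]^{1/q}\le\sqrt{n}$.
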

\begin{proof}
See Appendix \ref{app:1}.
\end{proof}
Let $(y_{k})_{k}$ be the stochastic process whose sample paths are generated via Algorithm \ref{alg:proxderivativefree}. The natural filtration associated with this process is $\scrF_{k}\eqdef\sigma(y_{1},\ldots,y_{k})$. Along the sample paths of this process, we can perform the following error decomposition of the random gradient estimators:  
\begin{equation}\label{eq:errordecomp}
\hat{V}^{\beta}_{k+1}=\hat{V}_{k+1}-a_{k+1}+b_{k+1},
\end{equation}
with 
\begin{align*}
&a_{k+1}\eqdef \frac{1}{m_{k+1}}\sum_{i=1}^{m_{k+1}}\frac{F(\bx^{\beta_{k}}(y_{k},\xi^{i}_{2,k+1}),\xi_{1,k+1}^{i})-F(\bx^{\ast}(y_{k},\xi^{i}_{2,k+1}),\xi_{1,k+1}^{i})}{\eta} BU^{i}_{k+1},\\
&b_{k+1}\eqdef\frac{1}{m_{k+1}}\sum_{i=1}^{m_{k+1}}\frac{F(\bx^{\beta_{k}}(y_{k}+\eta U^{i}_{k+1},\xi^{i}_{2,k+1}),\xi_{1,k+1}^{i})-F(\bx^{\ast}(y_{k}+\eta U^{i}_{k+1},\xi^{i}_{2,k+1}),\xi_{1,k+1}^{i})}{\eta} BU^{i}_{k+1}
\end{align*}
Note that $\Ex(a_{k+1}\vert\scrF_{k})=0$, and we can derive a bound in $L^{2}(\Pr)$ as the following Lemma shows.
\begin{lemma}\label{lem:errorboundinexact}
Let be $p>2$ the exponent from Definition \ref{def:inexact_lower}. There exists a 
constant $C_{F}>0$, such that 
\begin{align}
&\Ex\left[\norm{a_{k+1}}^{2}_{\ast}\vert\scrF_{k}\right]\leq C_{F}\frac{\beta^{2}_{k}}{\eta^{2}},\text{ and }\Ex\left[\norm{b_{k+1}}^{2}_{\ast}\vert\scrF_{k}\right]\leq C_{F}\frac{\beta^{2}_{k}}{\eta^{2}},
\end{align}
\end{lemma}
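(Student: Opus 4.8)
The plan is to bound each term directly in $L^2(\Pr)$, conditionally on $\scrF_k$, exploiting the i.i.d.\ structure of the samples $(U^i_{k+1},\xi^i_{k+1})_{i=1}^{m_{k+1}}$ together with the Lipschitz property of $F$ from Assumption \ref{ass:UL} and the $\beta_k$-optimality of the inexact lower-level solution (Definition \ref{def:inexact_lower}). I treat $b_{k+1}$ first, since $a_{k+1}$ is the special case $\eta U \equiv 0$. Write $D^i \eqdef \dfrac{F(\bx^{\beta_k}(y_k+\eta U^i_{k+1},\xi^i_{2,k+1}),\xi^i_{1,k+1})-F(\bx^\ast(y_k+\eta U^i_{k+1},\xi^i_{2,k+1}),\xi^i_{1,k+1})}{\eta}$, so that $b_{k+1}=\frac{1}{m_{k+1}}\sum_i D^i B U^i_{k+1}$. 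Expanding the squared dual norm and using $\norm{D^i BU^i}_\ast^2 = (D^i)^2\norm{U^i}^2$ (since $\norm{Bu}_\ast^2=\inner{Bu,B^{-1}Bu}=\norm{u}^2$), I get
\[
\Ex\!\left[\norm{b_{k+1}}_\ast^2\,\middle|\,\scrF_k\right]
=\frac{1}{m_{k+1}^2}\sum_{i,j}\Ex\!\left[D^i D^j\inner{BU^i,B^{-1}BU^j}\,\middle|\,\scrF_k\right].
\]
Rather than tracking cross terms carefully, the cleanest route is to apply the triangle inequality in $L^2$ at the level of the sum, or simply to bound $\norm{b_{k+1}}_\ast \le \frac{1}{m_{k+1}}\sum_i |D^i|\,\norm{U^i_{k+1}}$ and then take conditional expectation of the square; by Cauchy--Schwarz (or Jensen on the average) this is $\le \frac{1}{m_{k+1}}\sum_i \Ex[(D^i)^2\norm{U^i_{k+1}}^2\mid\scrF_k]$, which by the i.i.d.\ property equals $\Ex[(D^1)^2\norm{U^1_{k+1}}^2\mid\scrF_k]$.

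It then remains to bound the single-sample quantity $\Ex[(D^1)^2\norm{U^1}^2\mid\scrF_k]$. By the Lipschitz bound \eqref{eq:FLipschitz}, conditionally on everything, $|D^1|\le \frac{1}{\eta}\lip_0(F(\cdot,\xi^1_{1,k+1}))\norm{\bx^{\beta_k}(y_k+\eta U^1,\xi^1_{2,k+1})-\bx^\ast(y_k+\eta U^1,\xi^1_{2,k+1})}_\scrX$. Hence
\[
\Ex\!\left[(D^1)^2\norm{U^1}^2\,\middle|\,\scrF_k\right]
\le \frac{1}{\eta^2}\,\Ex\!\left[\lip_0(F(\cdot,\xi_1))^2\,\norm{U}^2\,\norm{\bx^{\beta_k}(y_k+\eta U,\xi_2)-\bx^\ast(y_k+\eta U,\xi_2)}_\scrX^2\,\middle|\,\scrF_k\right].
\]
Now I invoke Hölder's inequality to decouple the three factors: choosing exponents matched to the moment assumptions — $p$ for the reconstruction error (as in Definition \ref{def:inexact_lower}), some exponent for $\norm{U}^2$ (all Gaussian moments are finite), and the integrability $\abs{\lip_0(F(\cdot,\xi_1))}_1<\infty$ from Assumption \ref{ass:UL} — one obtains a product of the form $\big(\text{finite constant involving Gaussian moments and }\lip_0(F)\text{-moments}\big)\cdot \beta_k^2 / \eta^2$. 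This defines the constant $C_F$, which is independent of $k$, $\beta_k$, $\eta$, and $y_k$. The bound on $a_{k+1}$ follows by the identical argument with $y_k+\eta U^i$ replaced by $y_k$ (the perturbation direction drops out of the argument of $\bx^\beta-\bx^\ast$ but the multiplicative $BU^i$ remains), giving the same constant.

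The main obstacle is bookkeeping around the moment exponents: since the product $\lip_0(F(\cdot,\xi_1))^2\norm{U}^2\norm{\bx^{\beta_k}-\bx^\ast}_\scrX^2$ couples three random quantities with different available integrability, I must pick a valid Hölder triple $(p_1,p_2,p_3)$ with $\tfrac1{p_1}+\tfrac1{p_2}+\tfrac1{p_3}=1$ such that each factor has a finite $p_i$-th moment of the relevant power. The reconstruction-error factor forces $p_3$ so that $2p_3 \le p$ (using $p>2$ strictly, so $p_3>1$ is available); the Lipschitz modulus is only assumed $L^1$ in Assumption \ref{ass:UL}, so in fact one needs either to strengthen this to an $L^r$ assumption for the squared modulus or, more cleanly, to absorb $\lip_0(F(\cdot,\xi_1))$ together with $\norm{U}$ under a joint exponent while leaving $\beta_k$-error on its own. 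A careful statement would note precisely which integrability of $\lip_0(F(\cdot,\xi_1))$ is used; assuming the paper's standing hypotheses supply enough (e.g.\ via Assumption \ref{ass:LipH} applied to $H$, or an implicit higher-moment assumption on $\lip_0(F)$), the constant $C_F$ is finite and the proof closes. I would also remark that $\norm{U}$ has all finite moments because $U\sim\Normal(0,\Id_\scrY)$, so the Gaussian factor never obstructs the Hölder split.
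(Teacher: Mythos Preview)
Your strategy matches the paper's: triangle inequality on the sum, Jensen to reduce to a single sample, the Lipschitz bound \eqref{eq:FLipschitz} on $F$, then a moment decoupling. The difference is in this last step. You set up a three-way H\"older split between $\lip_0(F(\cdot,\xi_1))^2$, $\norm{U}^2$, and $\norm{\bx^{\beta_k}-\bx^\ast}_\scrX^2$, which is precisely what creates your ``main obstacle'' about integrability of the Lipschitz modulus. The paper instead exploits the mutual independence of $(\xi_{1},\xi_{2},U)$ to factor $\Ex[\lip_0(F(\cdot,\xi_1))^2]=\abs{\lip_0(F(\cdot,\xi_1))}_2^2$ out of the conditional expectation first, so only an $L^2$ moment of $\lip_0(F)$ is required and the constant is explicit: $C_F=n\,\abs{\lip_0(F(\cdot,\xi_1))}_2^2$. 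For $a_{k+1}$ all three factors are then independent and no H\"older is needed at all; for $b_{k+1}$ one is left with $\Ex[\norm{\bx^{\beta_k}(y_k+\eta U,\xi_2)-\bx^\ast(y_k+\eta U,\xi_2)}_\scrX^2\,\norm{U}^2\mid\scrF_k]$, which the paper handles by a single two-way H\"older with $2s=p$ on the reconstruction error and the conjugate exponent on the Gaussian moment. Using the independence structure up front is the simplification that dissolves your exponent-bookkeeping concern; otherwise your argument is the same as the paper's.
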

\begin{proof}
See Appendix \ref{proof:lem4.10}.
\end{proof}

\section{Complexity analysis for the Non-Convex case}
\label{sec:Nonconvex}
%

\subsection{Exact lower level solution}
We begin our convergence analysis in the non-convex setting, focusing on cases where the lower-level problem can be solved exactly. Our first Lemma provides an estimate on the per-iteration function progress in terms of the smoothed hyperobjective $\Psi_{\eta}$. 
\begin{lemma}\label{lem:Psiprogress}
Consider the sequence $(y_{k})_{k\in\N}$ generated by Algorithm \eqref{alg:proxderivativefree} with gradient estimator \eqref{eq:V}. Then, for all $\eta>0$, we have 
\begin{equation}\label{eq:Psiprogress}
\begin{split}
\Psi_{\eta}(y_{k+1})-\Psi_{\eta}(y_{k}) \leq& -\alpha_{k}\norm{\tilde{\scrG}_{\eta,\alpha_{k}}(y_{k})}^{2}\left(1-\frac{\alpha_{k}\lip_{1}(h_{\eta})}{2}\right)+\alpha_{k}\inner{\Delta W_{k+1},\scrG_{\eta,\alpha_{k}}(y_{k})}+\alpha_{k}\norm{\Delta W_{k+1}}_{\ast}^{2}.
\end{split}
\end{equation}
\end{lemma}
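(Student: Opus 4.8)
The plan is to combine the Lipschitz descent inequality \eqref{eq:descent} for the smooth part $h_{\eta}$ with the first-order optimality characterization of the proximal step for the convex part $r_{1}$, and then to trade the stochastic prox-gradient mapping $\tilde{\scrG}_{\eta,\alpha_{k}}$ for the deterministic one $\scrG_{\eta,\alpha_{k}}$ inside the cross term, at the cost of an extra quadratic error term.

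\medskip
\noindent\textbf{Step 1 (descent on $h_{\eta}$).} Since $h_{\eta}\in\bC^{1,1}(\scrY)$ by Lemma~\ref{lem:boundGradSmoothed}, apply \eqref{eq:descent} to the pair $y_{k},y_{k+1}$. By the definition \eqref{eq:randomgradient} and Algorithm~\ref{alg:proxderivativefree} we have $y_{k+1}-y_{k}=-\alpha_{k}\tilde{\scrG}_{\eta,\alpha_{k}}(y_{k})$, hence $\norm{y_{k+1}-y_{k}}^{2}=\alpha_{k}^{2}\norm{\tilde{\scrG}_{\eta,\alpha_{k}}(y_{k})}^{2}$ and
\[
h_{\eta}(y_{k+1})\leq h_{\eta}(y_{k})-\alpha_{k}\inner{\nabla h_{\eta}(y_{k}),\tilde{\scrG}_{\eta,\alpha_{k}}(y_{k})}+\frac{\lip_{1}(h_{\eta})}{2}\alpha_{k}^{2}\norm{\tilde{\scrG}_{\eta,\alpha_{k}}(y_{k})}^{2}.
\]

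\medskip
\noindent\textbf{Step 2 (progress on $r_{1}$).} Writing $y_{k+1}=\prox_{\alpha_{k}r_{1}}(y_{k}-\alpha_{k}B^{-1}\hat{V}_{k+1})$, the optimality condition of the $B$-weighted proximal subproblem \eqref{eq:proxop} gives $B\tilde{\scrG}_{\eta,\alpha_{k}}(y_{k})-\hat{V}_{k+1}\in\partial r_{1}(y_{k+1})$. The subgradient inequality evaluated at $y_{k}$ then yields
\[
r_{1}(y_{k+1})-r_{1}(y_{k})\leq\inner{B\tilde{\scrG}_{\eta,\alpha_{k}}(y_{k})-\hat{V}_{k+1},\,y_{k+1}-y_{k}}=-\alpha_{k}\norm{\tilde{\scrG}_{\eta,\alpha_{k}}(y_{k})}^{2}+\alpha_{k}\inner{\hat{V}_{k+1},\tilde{\scrG}_{\eta,\alpha_{k}}(y_{k})}.
\]
Adding the two displays and using $\Delta W_{k+1}=\hat{V}_{k+1}-\nabla h_{\eta}(y_{k})$ gives
\[
\Psi_{\eta}(y_{k+1})-\Psi_{\eta}(y_{k})\leq-\alpha_{k}\norm{\tilde{\scrG}_{\eta,\alpha_{k}}(y_{k})}^{2}\left(1-\frac{\alpha_{k}\lip_{1}(h_{\eta})}{2}\right)+\alpha_{k}\inner{\Delta W_{k+1},\tilde{\scrG}_{\eta,\alpha_{k}}(y_{k})}.
\]

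\medskip
\noindent\textbf{Step 3 (replace $\tilde{\scrG}$ by $\scrG$).} Split $\inner{\Delta W_{k+1},\tilde{\scrG}_{\eta,\alpha_{k}}(y_{k})}=\inner{\Delta W_{k+1},\scrG_{\eta,\alpha_{k}}(y_{k})}+\inner{\Delta W_{k+1},\tilde{\scrG}_{\eta,\alpha_{k}}(y_{k})-\scrG_{\eta,\alpha_{k}}(y_{k})}$. By non-expansiveness of the prox and the identity $\norm{B^{-1}s}=\norm{s}_{\ast}$ we get $\norm{\tilde{\scrG}_{\eta,\alpha_{k}}(y_{k})-\scrG_{\eta,\alpha_{k}}(y_{k})}=\tfrac{1}{\alpha_{k}}\norm{P_{\alpha_{k}}(y_{k},\hat{V}_{k+1})-T_{\eta,\alpha_{k}}(y_{k})}\leq\norm{\Delta W_{k+1}}_{\ast}$, so Cauchy--Schwarz bounds the residual inner product by $\norm{\Delta W_{k+1}}_{\ast}^{2}$, yielding exactly \eqref{eq:Psiprogress}. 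The only subtle points are bookkeeping: writing the optimality condition of the $B$-weighted proximal map so that $B\tilde{\scrG}_{\eta,\alpha_{k}}(y_{k})-\hat{V}_{k+1}$ is the subgradient, and keeping the primal norm on $\scrY$ and the dual norm on $\scrY^{\ast}$ consistent via $\norm{B^{-1}s}=\norm{s}_{\ast}$; there is no genuine analytic obstacle.
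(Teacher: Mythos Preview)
Your proof is correct and follows essentially the same approach as the paper's: descent lemma for $h_{\eta}$, optimality condition of the proximal step yielding the subgradient inequality for $r_{1}$, combination into the bound with $\tilde{\scrG}_{\eta,\alpha_{k}}$ in the cross term, and then the split $\tilde{\scrG}=\scrG+(\tilde{\scrG}-\scrG)$ together with non-expansiveness of the prox and Cauchy--Schwarz to absorb the residual into $\norm{\Delta W_{k+1}}_{\ast}^{2}$. The only differences are cosmetic (order of presentation); the argument is the same.
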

\begin{proof}
See Appendix \ref{proof:Lemma5.1}.
\end{proof}
Set
\begin{align*}
&E_{k+1}\eqdef \norm{\Delta W_{k+1}}^{2}_{\ast}+\inner{\Delta W_{k+1},\scrG_{\alpha_{k}}(y_{k})}\text{ and } \\
&\Psi^{\Opt}_{\eta}\eqdef \min_{\by\in\scrY}\Psi_{\eta}(y).
\end{align*}
Summing \eqref{eq:Psiprogress} from $k=1,\ldots,N$, we obtain
\[
\sum_{k=1}^{N}\alpha_{k}\left(1-\frac{\lip_{1}(h_{\eta})\alpha_{k}}{2}\right)\norm{\tilde{\scrG}_{\eta,\alpha_{k}}(y_{k})}^{2}\leq \Psi_{\eta}(y_{1})-\Psi_{\eta}(y_{N+1})+\sum_{k=1}^{N}\alpha_{k}E_{k+1} \leq \Psi_{\eta}(y_{1})-\Psi_{\eta}^{\Opt}+\sum_{k=1}^{N}\alpha_{k}E_{k+1}.
\]
Let $\scrF_{k}\eqdef \sigma(y_{1},\ldots,y_{k})$ denote the natural filtration up to time $k$ of the process, so that 
\[
\Ex_{k}(E_{k+1})\eqdef \Ex[E_{k+1}\vert\scrF_{k}]=\Ex[\norm{\Delta W_{k+1}}^{2}_{\ast}\vert\scrF_{k}]\leq \frac{\cs^{2}}{m_{k+1}},\quad\text{a.s.\,.}
\]
Therefore, using the law of iterated expectations, we obtain
\begin{equation}\label{eq:MainNonconvex}
\Ex\left[\sum_{k=1}^{N}\alpha_{k}\left(1-\frac{\lip_{1}(h_{\eta})\alpha_{k}}{2}\right)\norm{\tilde{\scrG}_{\eta,\alpha_{k}}(y_{k})}^{2}\right]\leq \Psi_{\eta}(y_{1})-\Psi_{\eta}^{\Opt}+\sum_{k=1}^{N}\frac{\alpha_{k}\cs^{2}}{m_{k+1}}
\end{equation}
where $\cs\eqdef (4+n)\abs{\lip_{0}(H(\cdot,\xi))}_{2}$. This yields our first main result in this paper:
\begin{theorem}\label{th:nonconvex1}
Let $(y_k)_{k\in\N}$ be generated by Algorithm~\ref{alg:proxderivativefree} with gradient estimator \eqref{eq:V}. Let the step sizes $(\alpha_{k})_{k\in\N}$ be chosen such that $\alpha_{k}\in(0,2/\lip_{1}(h_{\eta})]$, with $\alpha_{k}<2/\lip_{1}(h_{\eta})$ for at least on $k\in\{1,\ldots,N\}$. On $(\Omega,\scrF,\Pr)$ define an independent random variable $\kappa:\Omega\to\{1,\ldots,N\}$ with probability mass function 
\begin{equation}\label{eq:lawkappa}
p(k)\eqdef\Pr(\kappa=k)\eqdef \frac{\alpha_{k}-\alpha_{k}^{2}\lip_{1}(h_{\eta})/2}{\sum_{t=1}^{N}(\alpha_{t}-\alpha^{2}_{t}\lip_{1}(h_{\eta})/2)},\quad k\in\{1,\ldots,N\}.
\end{equation}
Then 
\begin{equation}\label{eq:complexityGap}
\Ex\left[\norm{\tilde{\scrG}_{\eta,\alpha_{\kappa}}(y_{\kappa})}^{2}\right]\leq \frac{\Psi_{\eta}(y_{1})-\Psi_{\eta}^{\Opt}+\sum_{k=1}^{N}\frac{\alpha_{k}\cs^{2}}{m_{k+1}}}{\sum_{t=1}^{N}(\alpha_{t}-\alpha^{2}_{t}\lip_{1}(h_{\eta})/2)},
\end{equation}
where $\cs\eqdef(4+n)\abs{\lip_{0}(H(\cdot,\xi))}_{2}$.
\end{theorem}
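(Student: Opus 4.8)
The plan is to derive \eqref{eq:complexityGap} directly from the accumulated progress inequality \eqref{eq:MainNonconvex}, which has already been established by summing the per-iteration estimate from Lemma~\ref{lem:Psiprogress} and taking expectations with the martingale-difference property of $\Delta W_{k+1}$ in hand. The left-hand side of \eqref{eq:MainNonconvex} is a weighted sum $\sum_{k=1}^N w_k \Ex[\norm{\tilde{\scrG}_{\eta,\alpha_k}(y_k)}^2]$ with nonnegative weights $w_k \eqdef \alpha_k(1-\alpha_k\lip_1(h_\eta)/2) = \alpha_k - \alpha_k^2\lip_1(h_\eta)/2 \geq 0$, the nonnegativity being guaranteed by the hypothesis $\alpha_k \in (0, 2/\lip_1(h_\eta)]$, and with $\sum_k w_k > 0$ because at least one step size is strictly below $2/\lip_1(h_\eta)$.

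The key step is the standard randomization trick: introduce the index $\kappa$ with mass function \eqref{eq:lawkappa}, i.e.\ $p(k) = w_k / \sum_{t=1}^N w_t$, independent of the process $(y_k)_k$ and the noise. First I would write, using independence of $\kappa$ and conditioning on $\scrF_N$ (or simply using the tower property over the joint law),
\[
\Ex\left[\norm{\tilde{\scrG}_{\eta,\alpha_\kappa}(y_\kappa)}^2\right] = \sum_{k=1}^N p(k)\, \Ex\left[\norm{\tilde{\scrG}_{\eta,\alpha_k}(y_k)}^2\right] = \frac{1}{\sum_{t=1}^N w_t}\sum_{k=1}^N w_k\, \Ex\left[\norm{\tilde{\scrG}_{\eta,\alpha_k}(y_k)}^2\right].
\]
Then I would bound the numerator sum by the right-hand side of \eqref{eq:MainNonconvex}, namely $\Psi_\eta(y_1) - \Psi_\eta^{\Opt} + \sum_{k=1}^N \alpha_k \cs^2/m_{k+1}$, and divide by $\sum_{t=1}^N w_t = \sum_{t=1}^N(\alpha_t - \alpha_t^2\lip_1(h_\eta)/2)$, which yields exactly \eqref{eq:complexityGap}. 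The value of $\cs$ is recorded from Lemma~\ref{lem:errorbound}, which supplies the per-sample variance bound $\cs^2/m$ that was used in passing from the summed inequality to \eqref{eq:MainNonconvex}.

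There is no real obstacle here; the statement is essentially a repackaging of \eqref{eq:MainNonconvex} as an expectation over a randomly stopped iterate. The only points requiring a word of care are: (i) that the weights $w_k$ are nonnegative so that $p(k)$ is a genuine probability mass function, which is why the step-size restriction $\alpha_k \le 2/\lip_1(h_\eta)$ is imposed; (ii) that $\sum_t w_t \neq 0$, ensured by the strict-inequality clause on at least one index; and (iii) that $\kappa$ is constructed on the same space $(\Omega,\scrF,\Pr)$ \emph{independently} of everything else, so that $\Ex[\,\cdot\,] = \Ex_\kappa \Ex[\,\cdot\mid\kappa]$ splits the expectation cleanly. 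With these in place the proof is a two-line computation followed by an invocation of \eqref{eq:MainNonconvex}.
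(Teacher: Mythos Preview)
Your proposal is correct and mirrors the paper's proof essentially verbatim: the paper also invokes \eqref{eq:MainNonconvex} together with the identity $\Ex[\norm{\tilde{\scrG}_{\eta,\alpha_{\kappa}}(y_{\kappa})}^{2}]=\sum_{k=1}^{N}p(k)\,\Ex[\norm{\tilde{\scrG}_{\eta,\alpha_{k}}(y_{k})}^{2}]$ coming from the independence of $\kappa$, and concludes immediately. Your additional remarks on why the $p(k)$ form a legitimate probability mass function are accurate and make explicit what the paper leaves implicit.
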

\begin{proof}
Using  eq. \eqref{eq:MainNonconvex}, together with the observation that 
\[
\Ex\left[\norm{\tilde{\scrG}_{\eta,\alpha_{\kappa}}(y_{\kappa})}^{2}\right]=\sum_{k=1}^{N}\frac{\alpha_{k}-\alpha_{k}^{2}\lip_{1}(h_{\eta})/2}{\sum_{t=1}^{N}(\alpha_{t}-\alpha^{2}_{t}\lip_{1}(h_{\eta})/2)}\Ex\left[\norm{\tilde{\scrG}_{\eta,\alpha_{k}}(y_{k})}^{2}\right],
\]
the thesis follows.
\end{proof}
A few remarks are in order. First, due to the ratio $\frac{\alpha_{k}}{m_{k+1}}$, there is a trade-off between too aggressive step-sizes and the size of the mini-batches. In fact, consider the particular step-size policy $\alpha_{k}\leq \frac{1}{\lip_{1}(h_{\eta})}$. Then, it follows $\frac{\lip_{1}(h_{\eta})}{2}\alpha^{2}_{k}\leq \frac{1}{2}\alpha_{k}$. Therefore, the numerator in our complexity bound \eqref{eq:complexityGap} can be further bounded as 
\[
\Ex\left[\norm{\tilde{\scrG}_{\eta,\alpha_{\kappa}}(y_{\kappa})}^{2}\right]\leq \frac{\Psi_{\eta}(y_{1})-\Psi_{\eta}^{\Opt}+\sum_{k=1}^{N}\frac{\alpha_{k}\cs^{2}}{m_{k+1}}}{\sum_{t=1}^{N}(\alpha_{t}/2)}.
\]
This bound suggests choosing step sizes like $\alpha_{k}=\frac{2\beta}{\lip_{1}(h_{\eta})\sqrt{k}}$ with $\beta\in(0,1/2)$, and mini-batches $m_{k+1}=\ca \sqrt{k}$, with $\ca>0$, to obtain the typical $\scrO(\log(N)/\sqrt{N})$ complexity estimate for proximal gradient methods. Indeed, such a step size choice yields the iteration complexity upper bound
\[
\Ex\left[\norm{\tilde{\scrG}_{\eta,\alpha_{\kappa}}(y_{\kappa})}^{2}\right]\leq \frac{\frac{\lip_{1}(h_{\eta})}{\beta}(\Psi_{\eta}(y_{1})-\Psi_{\eta}^{\Opt})+\frac{2\cs^{2}}{\ca}(1+\log(N))}{\sqrt{N}}.
\]
On the contrary, if a constant step size and constant mini-batch estimation strategy is adopted, then we see that convergence with respect to our stationary measure can only happen up to a plateau, a well-known fact when using stochastic approximation \cite{BotCurNoc18,GhaLanHon16}. Specifically, taking constant mini-batches $m_{k+1}=m$ and constant step-sizes $\alpha_{k}=\frac{2\beta}{\lip_{1}(h_{\eta})}$ for all $k\in\{1,\ldots,N\}$ and some $\beta\in (0,1/2)$, then our complexity bound is readily seen to become 
\[
\Ex\left[\norm{\tilde{\scrG}_{\eta,\alpha_{\kappa}}(y_{\kappa})}^{2}\right]\leq \frac{\lip_{1}(h_{\eta})(\Psi_{\eta}(y_{1})-\Psi_{\eta}^{\Opt})}{\beta(2-\beta)N}+\frac{2\cs^{2}}{(2-\beta)m}.
\]
Our next result measures is a complexity estimate in terms of the prox-gradient mapping involving the deterministic gradient $\nabla h_{\eta}$, instead of the stochastic approximation. 
\begin{corollary}\label{corr:Main}
Let $(y_k)_{k\in\N}$ be generated by Algorithm~\ref{alg:proxderivativefree} with gradient estimator \eqref{eq:V}. Assume that the step sizes $\alpha_{k}$ are chosen such that $\alpha_{k}\in(0,2/\lip_{1}(h_{\eta})]$, with $\alpha_{k}<2/\lip_{1}(h_{\eta})$ for at least one $k\in\{1,\ldots,N\}$. Let $\kappa:\Omega\to\{1,\ldots,N\}$ the discrete random variable with distribution \eqref{eq:lawkappa}. Then, 
\begin{equation}\label{eq:GNonconvex}
\Ex[\norm{\scrG_{\eta,\alpha_{\kappa}}(y_{\kappa})}^{2}]\leq \frac{4(\Psi_{\eta}(y_{1})-\Psi_{\eta}^{\Opt})}{\sum_{t=1}^{N}(2\alpha_{t}-\alpha^{2}_{t}\lip_{1}(h_{\eta}))}+\frac{\sum_{k=1}^{N}\frac{2\cs^{2}}{m_{k+1}}(4\alpha_{k}-\alpha^{2}_{k}\lip_{1}(h_{\eta}))}{\sum_{t=1}^{N}(2\alpha_{t}-\alpha^{2}_{t}\lip_{1}(h_{\eta}))}
\end{equation}
\end{corollary}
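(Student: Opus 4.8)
The plan is to obtain \eqref{eq:GNonconvex} by post-processing Theorem~\ref{th:nonconvex1} with the almost-sure comparison of Lemma~\ref{lem:RelationGradient}; the only real content is bookkeeping the weights $p(k)$ from \eqref{eq:lawkappa}. I would first fix $k\in\{1,\dots,N\}$ and apply Lemma~\ref{lem:RelationGradient} with $\by=y_{k}$, $t=\alpha_{k}$, giving $\norm{\scrG_{\eta,\alpha_{k}}(y_{k})}^{2}\le 2\norm{\tilde{\scrG}_{\eta,\alpha_{k}}(y_{k})}^{2}+2\norm{\Delta W_{k+1}}_{\ast}^{2}$ almost surely. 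Taking expectations, and using on the noise term the conditional bound $\Ex[\norm{\Delta W_{k+1}}_{\ast}^{2}\mid\scrF_{k}]\le \cs^{2}/m_{k+1}$ established just before Theorem~\ref{th:nonconvex1} together with the tower property, this becomes $\Ex[\norm{\scrG_{\eta,\alpha_{k}}(y_{k})}^{2}]\le 2\,\Ex[\norm{\tilde{\scrG}_{\eta,\alpha_{k}}(y_{k})}^{2}]+2\cs^{2}/m_{k+1}$ for each $k$.

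Next I would multiply this inequality by $p(k)$, sum over $k=1,\dots,N$, and exploit the independence of $\kappa$, exactly as in the proof of Theorem~\ref{th:nonconvex1}, to identify $\sum_{k}p(k)\Ex[\norm{\scrG_{\eta,\alpha_{k}}(y_{k})}^{2}]=\Ex[\norm{\scrG_{\eta,\alpha_{\kappa}}(y_{\kappa})}^{2}]$ and $\sum_{k}p(k)\Ex[\norm{\tilde{\scrG}_{\eta,\alpha_{k}}(y_{k})}^{2}]=\Ex[\norm{\tilde{\scrG}_{\eta,\alpha_{\kappa}}(y_{\kappa})}^{2}]$. This yields $\Ex[\norm{\scrG_{\eta,\alpha_{\kappa}}(y_{\kappa})}^{2}]\le 2\,\Ex[\norm{\tilde{\scrG}_{\eta,\alpha_{\kappa}}(y_{\kappa})}^{2}]+2\sum_{k}p(k)\cs^{2}/m_{k+1}$, into which I substitute \eqref{eq:complexityGap} for the first term.

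It then remains to tidy the constants. Abbreviating $S\eqdef\sum_{t=1}^{N}(\alpha_{t}-\alpha_{t}^{2}\lip_{1}(h_{\eta})/2)=\tfrac12\sum_{t=1}^{N}(2\alpha_{t}-\alpha_{t}^{2}\lip_{1}(h_{\eta}))$, one has $2/S=4/\sum_{t}(2\alpha_{t}-\alpha_{t}^{2}\lip_{1}(h_{\eta}))$ and $2p(k)=2(2\alpha_{k}-\alpha_{k}^{2}\lip_{1}(h_{\eta}))/\sum_{t}(2\alpha_{t}-\alpha_{t}^{2}\lip_{1}(h_{\eta}))$; hence the leading term equals $4(\Psi_{\eta}(y_{1})-\Psi_{\eta}^{\Opt})/\sum_{t}(2\alpha_{t}-\alpha_{t}^{2}\lip_{1}(h_{\eta}))$, and the two noise pieces $\tfrac{2}{S}\sum_{k}\alpha_{k}\cs^{2}/m_{k+1}$ and $2\sum_{k}p(k)\cs^{2}/m_{k+1}$ combine, over the common denominator $\sum_{t}(2\alpha_{t}-\alpha_{t}^{2}\lip_{1}(h_{\eta}))$, into $\sum_{k}\tfrac{\cs^{2}}{m_{k+1}}\bigl(4\alpha_{k}+2(2\alpha_{k}-\alpha_{k}^{2}\lip_{1}(h_{\eta}))\bigr)=\sum_{k}\tfrac{2\cs^{2}}{m_{k+1}}(4\alpha_{k}-\alpha_{k}^{2}\lip_{1}(h_{\eta}))$, which is precisely \eqref{eq:GNonconvex}. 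I do not expect any real obstacle here: the argument is a weighted average of per-iteration inequalities, and the only thing to be careful about is the elementary algebra relating the $\lip_{1}(h_{\eta})$-weighted sums, plus the routine measurability remark that $\scrG_{\eta,\alpha_{k}}(y_{k})$ is $\scrF_{k}$-measurable so the conditioning step is legitimate.
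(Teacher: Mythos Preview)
Your proposal is correct and follows essentially the same approach as the paper: apply Lemma~\ref{lem:RelationGradient} at each iterate, weight by $p(k)$, invoke the bound \eqref{eq:complexityGap} from Theorem~\ref{th:nonconvex1} on the $\tilde{\scrG}$-term, and combine the two noise contributions algebraically. The paper multiplies by the weights before taking expectations while you take expectations first and then weight, but this is immaterial and the resulting arithmetic is identical.
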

\begin{proof} 
From Lemma \ref{lem:RelationGradient} we readily obtain 
\[
\frac{\alpha_{k}}{2}(1-\frac{\alpha_{k}\lip_{1}(h_{\eta})}{2})\norm{\scrG_{\eta,\alpha_{k}}(y_{k})}^{2}\leq \alpha_{k}(1-\frac{\alpha_{k}\lip_{1}(h_{\eta})}{2})\norm{\tilde{\scrG}_{\eta,\alpha_{k}}(y_{k})}^{2}+\alpha_{k}(1-\frac{\alpha_{k}\lip_{1}(h_{\eta})}{2})\norm{\Delta W_{k+1}}^{2}_{\ast}. 
\]
Consequently, using \eqref{eq:complexityGap}:
\begin{align*}
\frac{1}{2}\Ex[\norm{\scrG_{\eta,\alpha_{\kappa}}(y_{\kappa})}^{2}]&=\frac{1}{2}\sum_{k=1}^{N}\frac{\alpha_{k}-\alpha^{2}_{k}\lip_{1}(h_{\eta})/2}{\sum_{t=1}^{N}(\alpha_{t}-\alpha^{2}_{t}\lip_{1}(h_{\eta})/2)}\Ex[\norm{\scrG_{\eta,\alpha_{k}}(y_{k})}^{2}]\\
&\leq\sum_{k=1}^{N}\frac{\alpha_{k}-\alpha_{k}^{2}\lip_{1}(h_{\eta})/2}{\sum_{t=1}^{N}(\alpha_{t}-\alpha^{2}_{t}\lip_{1}(h_{\eta})/2)}\Ex[\norm{\tilde{\scrG}_{\eta,\alpha_{k}}(y_{k})}^{2}]+\sum_{k=1}^{N}\frac{\alpha_{k}(1-\frac{\alpha_{k}\lip_{1}(h_{\eta})}{2})\Ex[\norm{\Delta W_{k+1}}^{2}_{\ast}]}{\sum_{t=1}^{N}(\alpha_{t}-\alpha^{2}_{t}\lip_{1}(h_{\eta})/2)}\\
&\leq \frac{\Psi_{\eta}(y_{1})-\Psi_{\eta}^{\Opt}+\sum_{k=1}^{N}\frac{\alpha_{k}\cs^{2}}{m_{k+1}}}{\sum_{t=1}^{N}(\alpha_{t}-\alpha^{2}_{t}\lip_{1}(h_{\eta})/2)}+\sum_{k=1}^{N}\frac{\alpha_{k}(1-\frac{\alpha_{k}\lip_{1}(h_{\eta})}{2})\frac{\cs^{2}}{m_{k+1}}}{\sum_{t=1}^{N}(\alpha_{t}-\alpha^{2}_{t}\lip_{1}(h_{\eta})/2)}\\
&=\frac{\Psi_{\eta}(y_{1})-\Psi^{\Opt}_{\eta}}{\sum_{t=1}^{N}(\alpha_{t}-\alpha^{2}_{t}\lip_{1}(h_{\eta})/2)}+\frac{\sum_{k=1}^{N}\alpha_{k}\frac{\cs^{2}}{m_{k+1}}(2-\frac{\alpha_{k}\lip_{1}(h_{\eta})}{2})}{\sum_{t=1}^{N}(\alpha_{t}-\alpha^{2}_{t}\lip_{1}(h_{\eta})/2)}
\end{align*}
\end{proof}
\begin{corollary}\label{corr:Main2} 
Let $(y_{k})_{k=0}^{N}$ be generated by Algorithm \ref{alg:proxderivativefree} with gradient estimator \eqref{eq:V}. Choosing the step size $\alpha_{k}=\frac{2\beta}{\lip_{1}(h_{\eta})\sqrt{k}},\beta\in(0,1/2)$, and the sampling rate $m_{k+1}=\ca\sqrt{k},\ca>0$, and the time window $N\geq 2$. Then, we have 
\begin{align*}
\Ex[\norm{\scrG_{\eta,\alpha_{\kappa}}(y_{\kappa})}^{2}]&\leq \frac{2 \lip_{1}(h_{\eta})(\Psi_{\eta}(y_{1})-\Psi_{\eta}^{\Opt})}{\beta\sqrt{N}}+\frac{\frac{8\cs^{2}}{\ca}(1+\log(N))}{\sqrt{N}}.
\end{align*}
The total number of calls to the stochastic oracle and lower level solutions to find a point $y\in\scrY$ such that $\Ex[\norm{\scrG_{\eta}(y)}^{2}]\leq \eps$ is bounded by $\scrO(\eps^{-3})$.
\end{corollary}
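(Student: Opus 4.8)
The plan is to feed the prescribed step-size and mini-batch schedules into the estimate of Corollary~\ref{corr:Main} and perform the elementary sum estimates; no new idea is needed, and the only mildly delicate points are checking admissibility of the step sizes and translating the resulting rate into an oracle-complexity bound.

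First I would verify the hypotheses of Corollary~\ref{corr:Main}. Since $\beta\in(0,1/2)$, for every $k$ we have $\alpha_{k}=\frac{2\beta}{\lip_{1}(h_{\eta})\sqrt{k}}\le \frac{2\beta}{\lip_{1}(h_{\eta})}<\frac{1}{\lip_{1}(h_{\eta})}<\frac{2}{\lip_{1}(h_{\eta})}$, so $\alpha_{k}\in(0,2/\lip_{1}(h_{\eta}))$ throughout, and the discrete index $\kappa$ with law \eqref{eq:lawkappa} is well defined. Moreover $\alpha_{k}^{2}\lip_{1}(h_{\eta})=\alpha_{k}\cdot\frac{2\beta}{\sqrt{k}}\le\alpha_{k}$, hence $2\alpha_{k}-\alpha_{k}^{2}\lip_{1}(h_{\eta})\ge\alpha_{k}$ and therefore
\[
\sum_{t=1}^{N}\bigl(2\alpha_{t}-\alpha_{t}^{2}\lip_{1}(h_{\eta})\bigr)\ge\sum_{t=1}^{N}\alpha_{t}=\frac{2\beta}{\lip_{1}(h_{\eta})}\sum_{t=1}^{N}\frac{1}{\sqrt{t}}\ge\frac{2\beta}{\lip_{1}(h_{\eta})}\sqrt{N},
\]
using $\sum_{t=1}^{N}t^{-1/2}\ge N\cdot N^{-1/2}=\sqrt{N}$. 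Substituting this lower bound into the first term of \eqref{eq:GNonconvex} gives at once $\frac{4(\Psi_{\eta}(y_{1})-\Psi_{\eta}^{\Opt})}{\sum_{t}(2\alpha_{t}-\alpha_{t}^{2}\lip_{1}(h_{\eta}))}\le\frac{2\lip_{1}(h_{\eta})(\Psi_{\eta}(y_{1})-\Psi_{\eta}^{\Opt})}{\beta\sqrt{N}}$.

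For the variance term I would bound the numerator of the second summand in \eqref{eq:GNonconvex} by discarding the negative part, $4\alpha_{k}-\alpha_{k}^{2}\lip_{1}(h_{\eta})\le 4\alpha_{k}$, and inserting $m_{k+1}=\ca\sqrt{k}$ together with $\alpha_{k}=\frac{2\beta}{\lip_{1}(h_{\eta})\sqrt{k}}$:
\[
\sum_{k=1}^{N}\frac{2\cs^{2}}{m_{k+1}}\bigl(4\alpha_{k}-\alpha_{k}^{2}\lip_{1}(h_{\eta})\bigr)\le\sum_{k=1}^{N}\frac{8\cs^{2}\alpha_{k}}{\ca\sqrt{k}}=\frac{16\beta\cs^{2}}{\ca\,\lip_{1}(h_{\eta})}\sum_{k=1}^{N}\frac{1}{k}\le\frac{16\beta\cs^{2}}{\ca\,\lip_{1}(h_{\eta})}\bigl(1+\log N\bigr),
\]
where I used $\sum_{k=1}^{N}k^{-1}\le 1+\log N$. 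Dividing by the denominator lower bound $\frac{2\beta}{\lip_{1}(h_{\eta})}\sqrt{N}$ produces $\frac{8\cs^{2}(1+\log N)}{\ca\sqrt{N}}$, which is exactly the second term of the asserted inequality; adding the two contributions establishes the displayed bound.

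Finally, for the complexity claim I would note that the right-hand side is $\scrO\!\left(\frac{\log N}{\sqrt{N}}\right)$, so $\Ex[\norm{\scrG_{\eta,\alpha_{\kappa}}(y_{\kappa})}^{2}]\le\eps$ holds once $N=\scrO(\eps^{-2})$, up to a logarithmic factor in $\eps^{-1}$ that I would absorb into the $\scrO(\cdot)$. Since iteration $k+1$ consumes $m_{k+1}=\ca\sqrt{k}$ fresh samples, i.e.\ $\scrO(m_{k+1})$ evaluations of the (inner-solved) hyperobjective $H$, the cumulative number of oracle calls and lower-level solves after $N$ iterations is $\sum_{k=1}^{N}\ca\sqrt{k}=\scrO(N^{3/2})$; substituting $N=\scrO(\eps^{-2})$ yields the advertised $\scrO(\eps^{-3})$ bound. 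The only place requiring any care is this last step, where the logarithmic factor is suppressed when inverting the rate — a harmless and standard abuse — while everything preceding it is a direct specialization of Corollary~\ref{corr:Main}.
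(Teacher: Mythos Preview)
Your proposal is correct and follows essentially the same route as the paper: you specialize Corollary~\ref{corr:Main}, use $\alpha_{k}\lip_{1}(h_{\eta})\le 1$ to get $2\alpha_{k}-\alpha_{k}^{2}\lip_{1}(h_{\eta})\ge\alpha_{k}$ in the denominator, drop the negative term in the numerator, and then evaluate the resulting harmonic and square-root sums before reading off the oracle complexity. The only cosmetic difference is that the paper lower-bounds $\sum_{t=1}^{N}t^{-1/2}$ via the integral estimate $2\sqrt{N+1}-2\ge\sqrt{N}$ (which is where the hypothesis $N\ge 2$ enters), whereas your termwise bound $\sum_{t=1}^{N}t^{-1/2}\ge N\cdot N^{-1/2}=\sqrt{N}$ is even simpler and valid for all $N\ge 1$.
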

\begin{proof}
We start with recalling a simple integral bound. Note that 
$$
\sum_{t=1}^{N}\frac{1}{\sqrt{t}}\geq \int_{0}^{N}\frac{1}{\sqrt{x+1}}\dif x=2\sqrt{N+1}-2\geq\sqrt{N}
$$
for $N\geq 2$. Using this bound, the specific choices for the step sizes and the mini-batch size, lead to the following inequalities:
\begin{align*}
\Ex[\norm{\scrG_{\eta,\alpha_{\kappa}}(y_{\kappa})}^{2}]&\leq \frac{4(\Psi_{\eta}(y_{1})-\Psi_{\eta}^{\Opt})}{\sum_{t=1}^{N}\alpha_{t}}+\frac{\sum_{k=1}^{N}\frac{2\cs^{2}}{m_{k+1}}(4\alpha_{k}-\alpha^{2}_{k}\lip_{1}(h_{\eta}))}{\sum_{t=1}^{N}\alpha_{t}}\\
&\leq \frac{4(\Psi_{\eta}(y_{1})-\Psi_{\eta}^{\Opt})}{\sum_{t=1}^{N}\alpha_{t}}+\frac{\sum_{k=1}^{N}\frac{8\cs^{2}}{m_{k+1}}\alpha_{k}}{\sum_{t=1}^{N}\alpha_{t}}\\
&\leq\frac{2\lip_{1}(h_{\eta})(\Psi_{\eta}(y_{1})-\Psi_{\eta}^{\Opt})}{\beta\sqrt{N}}+\frac{\frac{8\cs^{2}}{\ca}(1+\log(N))}{\sqrt{N}}.
\end{align*}
Hence, the iteration complexity of the method is bounded by $\scrO(\eps^{-2})$. Now, to bound the oracle complexity, note that in each iteration of Algorithm \ref{alg:proxderivativefree} we need $m_{k+1}$ Gaussian vectors $U$ and the same number of random vectors $\xi=(\xi_{1},\xi_{2})$ to construct the random vector $\sum_{i=1}^{m_{k+1}}\frac{H(y^{k}+\eta U^{i}_{k+1},\xi^{i}_{k+1})}{\eta} BU^{i}_{k+1}$. We therefore have $m_{k+1}$ calls of the stochastic function $H(\cdot,\xi)$ in every single iteration. The total number of calls is thus 
$\sum_{k=1}^{N}m_{k+1}=\ca\sum_{k=1}^{N}\sqrt{k}\leq \frac{2\ca}{3}N^{3/2}$
As $N=\scrO(\eps^{-2})$, the oracle complexity is upper bounded by $\scrO(\eps^{-3})$. Similarly, in every iteration we need $m_{k+1}$ solutions of the lower level problem. Hence, by the above computation, the total number of lower level solves is bounded by $\scrO(\eps^{-3})$. 
\end{proof}

\subsection{Inexact lower level solution}
\label{sec:inexact}
%
For the complexity analysis of the inexact regime, we have to adapt the definition of the gradient mapping accordingly to 
\begin{equation}
\tilde{\scrG}^{\beta}_{\eta,t}(\by,\omega)\eqdef \frac{1}{t}\left(\by-P_{t}(\by,\hat{V}^{\beta}_{\eta,m}(\by,\omega))\right).
\end{equation}
Using this merit function and the definition of the error increment 
\begin{align*}
\Delta W^{\beta}_{k+1}&\eqdef \hat{V}^{\beta_{k}}_{k+1}-\nabla h_{\eta}(y_{k})=\hat{V}_{k+1}-a_{k+1}+b_{k+1}-\nabla h_{\eta}(y_{k})=\Delta W_{k+1}-a_{k+1}+b_{k+1},
\end{align*}
we can repeat the one-step analysis of the exact case to obtain the bound 
\begin{equation}\label{eq:iterative_error_decay}
\begin{split}
\Psi_{\eta}(y_{k+1})-\Psi_{\eta}(y_{k})\leq& -\alpha_{k}\left(1-\frac{\alpha_{k}\lip_{1}(h_{\eta})}{2}\right)\norm{\tilde{\scrG}^{\beta_{k}}_{\eta,\alpha_{k}}(y_{k})}^{2}\\
&+\alpha_{k}\inner{\Delta W^{\beta}_{k+1},\scrG_{\eta,\alpha_{k}}(y_{k})}+\alpha_{k}\norm{\Delta W^{\beta}_{k+1}}^{2}_{\ast}
\end{split}
\end{equation}
Lemma~\ref{lem:RelationGradient} generalizes in the inexact case in the following way:
\begin{lemma}\label{lem:RelationBiasedGradient}
We have 
\begin{equation}
\norm{\scrG_{\eta,t}(\by)}^{2}\leq 2\norm{\tilde{\scrG}_{\eta,t}^\beta(\by)}^{2}+2\norm{\Delta W_{\eta,m}^\beta(\by)}^{2}_{\ast}\qquad \text{a.s.} 
\end{equation}
\end{lemma}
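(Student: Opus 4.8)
The plan is to mimic the proof of Lemma~\ref{lem:RelationGradient} verbatim, replacing the exact estimator $\hat{V}_{\eta,m}(\by)$ by the biased estimator $\hat{V}^{\beta}_{\eta,m}(\by)$ throughout. The only structural fact that was used in the exact case was non-expansiveness of the proximal operator $\prox_{tr_{1}}$, together with the identity $P_{t}(\by,\nabla h_{\eta}(\by))=T_{\eta,t}(\by)$, and the definition $\Delta W^{\beta}_{\eta,m}(\by)=\hat{V}^{\beta}_{\eta,m}(\by)-\nabla h_{\eta}(\by)$ of the biased error process. None of these rely on unbiasedness, so the argument transfers without change. I expect no genuine obstacle here; the statement is purely a deterministic (almost-sure) triangle-inequality estimate and does not touch the bias bounds of Lemma~\ref{lem:biasestimate} or the variance bounds of Lemma~\ref{lem:errorboundinexact}.

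Concretely, first I would write, using the definition \eqref{eq:gradient} of $\scrG_{\eta,t}$ and inserting the splitting point $P_{t}(\by,\hat{V}^{\beta}_{\eta,m}(\by))$,
\begin{align*}
\norm{\scrG_{\eta,t}(\by)}^{2}
&=\norm{\frac{1}{t}\bigl[\by-P_{t}(\by,\hat{V}^{\beta}_{\eta,m}(\by))\bigr]+\frac{1}{t}\bigl[P_{t}(\by,\hat{V}^{\beta}_{\eta,m}(\by))-T_{\eta,t}(\by)\bigr]}^{2}\\
&\leq 2\norm{\tilde{\scrG}^{\beta}_{\eta,t}(\by)}^{2}+\frac{2}{t^{2}}\norm{P_{t}(\by,\hat{V}^{\beta}_{\eta,m}(\by))-T_{\eta,t}(\by)}^{2},
\end{align*}
where the inequality is $\norm{a+b}^2\le 2\norm{a}^2+2\norm{b}^2$ and the first term is recognized as $2\norm{\tilde{\scrG}^{\beta}_{\eta,t}(\by)}^{2}$.

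Then I would bound the second term: since $P_{t}(\by,v)=\prox_{tr_{1}}(\by-tB^{-1}v)$ and $T_{\eta,t}(\by)=P_{t}(\by,\nabla h_{\eta}(\by))$, non-expansiveness of $\prox_{tr_{1}}$ gives
\[
\norm{P_{t}(\by,\hat{V}^{\beta}_{\eta,m}(\by))-T_{\eta,t}(\by)}\leq \norm{tB^{-1}\bigl(\hat{V}^{\beta}_{\eta,m}(\by)-\nabla h_{\eta}(\by)\bigr)}=t\norm{\Delta W^{\beta}_{\eta,m}(\by)}_{\ast},
\]
using $\norm{B^{-1}s}=\norm{s}_{\ast}$. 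Substituting this into the previous display yields
\[
\norm{\scrG_{\eta,t}(\by)}^{2}\leq 2\norm{\tilde{\scrG}^{\beta}_{\eta,t}(\by)}^{2}+2\norm{\Delta W^{\beta}_{\eta,m}(\by)}^{2}_{\ast}\qquad\text{a.s.},
\]
which is the claim. The whole argument is a line-by-line copy of the unbiased proof, so the write-up is short and the main "difficulty" is simply bookkeeping the $\beta$-superscripts consistently.
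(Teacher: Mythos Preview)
Your proposal is correct and matches the paper's own proof exactly: the paper simply states that the assertion follows line by line from Lemma~\ref{lem:RelationGradient} by replacing $V_{\eta,m}(\by)$ with $V_{\eta,m}^{\beta}(\by)$, which is precisely what you carry out in detail.
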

\begin{proof}
The assertion follows line by line as in Lemma~\ref{lem:RelationGradient} by replacing $V_{\eta,m}(\by)$ with $V_{\eta,m}^\beta(\by)$.
\end{proof}

Using this lemma directly in the penultimate display, we see that for $\alpha_{k}\in(0,2/\lip_{1}(h_{\eta})]$ 
\begin{align*}
\Psi_{\eta}(y_{k+1})-\Psi_{\eta}(y_{k})&\leq -\frac{\alpha_{k}}{2}\left(1-\frac{\alpha_{k}\lip_{1}(h_{\eta})}{2}\right)\norm{\scrG_{\eta,\alpha_{k}}(y_{k})}^{2}+\alpha_{k}\inner{\Delta W^{\beta}_{k+1},\scrG_{\eta,\alpha_{k}}(y_{k})}+\alpha_{k}\norm{\Delta W^{\beta}_{k+1}}^{2}_{\ast}\\
&+\alpha_{k}\left(1-\frac{\alpha_{k}\lip_{1}(h_{\eta})}{2}\right)\norm{\Delta W^{\beta}_{k+1}}^{2}_{\ast}.
\end{align*}
Applying the Young's inequality of the inner product, we conclude that for arbitrary $\delta>0$ 
\begin{align*}
\Psi_{\eta}(y_{k+1})-\Psi_{\eta}(y_{k})\leq -\frac{\alpha_{k}}{2}\left(1-\frac{1}{\delta}-\frac{\alpha_{k}\lip_{1}(h_{\eta})}{2}\right)\norm{\scrG_{\eta,\alpha_{k}}(y_{k})}^{2}+\alpha_{k}\left(2+\frac{\delta}{2}-\frac{\alpha_{k}\lip_{1}(h_{\eta})}{2}\right)\norm{\Delta W^{\beta}_{k+1}}^{2}_{\ast}.
\end{align*}
Rearranging this expression and summing both sides from $k=1$ to $N$, we remain with 
\[\sum_{k=1}^{N}\frac{\alpha_{k}}{2}\left(\frac{\delta-1}{\delta}-\frac{\alpha_{k}\lip_{1}(h_{\eta})}{2}\right)\norm{\scrG_{\eta,\alpha_{k}}(y_{k})}^{2}\leq \Psi_{\eta}(y_{1})-\Psi^{\Opt}_{\eta}
+\sum_{k=1}^{N}\alpha_{k}\left(\frac{4+\delta}{2}-\frac{\alpha_{k}\lip_{1}(h_{\eta})}{2}\right)\norm{\Delta W^{\beta}_{k+1}}^{2}_{\ast}.\]
Since $\norm{\Delta W^{\beta}_{k+1}}^{2}_{\ast}\leq 3\norm{\Delta W_{k+1}}^{2}_{\ast}+3\norm{a_{k+1}}^{2}_{\ast}+3\norm{b_{k+1}}^{2}_{\ast}$, we can take iteratively conditional expectations to obtain the main complexity bound for the inexact regime. 
\begin{theorem}\label{th:complexityInexact}
Let $(y_{k})_{k=0}^{N}$ be generated by Algorithm \ref{alg:proxderivativefree} with inexact gradient estimator \eqref{eq:Vinexact}, $\delta>1$ and $r,s\geq 1$ such that $\frac{2s(r-1)}{r}=p\geq 2$, where $p$ is the exponent in Definition \ref{def:inexact_lower}. Suppose that the step sizes $\alpha_{k}$ are chosen such that $\alpha_{k}\in(0,\frac{2(\delta-1)}{\delta\lip_{1}(h_{\eta})}]$, with $\alpha_{k}<\frac{2(\delta-1)}{\delta\lip_{1}(h_{\eta})}$ for at least on $k\in\{1,\ldots,N\}$. On $(\Omega,\scrF,\Pr)$ define an independent random variable $\kappa:\Omega\to\{1,\ldots,N\}$ with probability mass function 
$$
p(k)=\Pr(\kappa=k)\eqdef\frac{\alpha_{k}\frac{\delta-1}{\delta}-\alpha_{k}^{2}\lip_{1}(h_{\eta})/2}{\sum_{t=1}^{N}(\alpha_{t}\frac{\delta-1}{\delta}-\alpha^{2}_{t}\lip_{1}(h_{\eta})/2)}\quad \forall k\in\{1,\ldots,N\}.
$$
Let $D_{k}\eqdef \frac{3(4+\delta)}{2}\left(\frac{\cs^{2}}{m_{k+1}}+C_{F}\frac{2\beta^{2}_{k}}{\eta^{2}}\right)$. Then, 
\begin{equation}
\frac{1}{2}\Ex[\norm{\scrG_{\eta,\alpha_{\kappa}}(y_{\kappa})}^{2}]\leq \frac{\Psi_{\eta}(y_{1})-\Psi^{\Opt}_{\eta}}{\sum_{t=1}^{N}(\alpha_{t}\frac{\delta-1}{\delta}-\alpha^{2}_{t}\lip_{1}(h_{\eta})/2)}+\frac{\sum_{k=1}^{N}\alpha_{k}D_{k}}{\sum_{t=1}^{N}(\alpha_{t}\frac{\delta-1}{\delta}-\alpha^{2}_{t}\lip_{1}(h_{\eta})/2)}.
\end{equation}
\end{theorem}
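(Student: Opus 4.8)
The plan is to start from the telescoped inequality established immediately before the statement, namely
\[
\sum_{k=1}^{N}\frac{\alpha_{k}}{2}\left(\frac{\delta-1}{\delta}-\frac{\alpha_{k}\lip_{1}(h_{\eta})}{2}\right)\norm{\scrG_{\eta,\alpha_{k}}(y_{k})}^{2}\leq \Psi_{\eta}(y_{1})-\Psi^{\Opt}_{\eta}+\sum_{k=1}^{N}\alpha_{k}\left(\frac{4+\delta}{2}-\frac{\alpha_{k}\lip_{1}(h_{\eta})}{2}\right)\norm{\Delta W^{\beta}_{k+1}}^{2}_{\ast},
\]
which follows by summing the one-step bound \eqref{eq:iterative_error_decay}, using Lemma~\ref{lem:RelationBiasedGradient}, and absorbing the cross term by Young's inequality. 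The step-size window $\alpha_{k}\leq \tfrac{2(\delta-1)}{\delta\lip_{1}(h_{\eta})}$ makes every coefficient $\tfrac{\alpha_{k}}{2}(\tfrac{\delta-1}{\delta}-\tfrac{\alpha_{k}\lip_{1}(h_{\eta})}{2})$ nonnegative, so the pmf $p(\cdot)$ in the statement is well defined; and since $\kappa$ is drawn independently of the process, $\Ex[\norm{\scrG_{\eta,\alpha_{\kappa}}(y_{\kappa})}^{2}]=\sum_{k}p(k)\,\Ex[\norm{\scrG_{\eta,\alpha_{k}}(y_{k})}^{2}]$ with $p(k)$ proportional to $2\cdot\tfrac{\alpha_{k}}{2}(\tfrac{\delta-1}{\delta}-\tfrac{\alpha_{k}\lip_{1}(h_{\eta})}{2})$. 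Dividing the left-hand side by $\sum_{t}(\alpha_{t}\tfrac{\delta-1}{\delta}-\alpha_{t}^{2}\lip_{1}(h_{\eta})/2)$ therefore reproduces exactly $\tfrac12\Ex[\norm{\scrG_{\eta,\alpha_{\kappa}}(y_{\kappa})}^{2}]$.

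Next I would control the stochastic terms by conditioning on $\scrF_{k}=\sigma(y_{1},\dots,y_{k})$. From $\Delta W^{\beta}_{k+1}=\Delta W_{k+1}-a_{k+1}+b_{k+1}$ and $\norm{\Delta W^{\beta}_{k+1}}^{2}_{\ast}\leq 3\norm{\Delta W_{k+1}}^{2}_{\ast}+3\norm{a_{k+1}}^{2}_{\ast}+3\norm{b_{k+1}}^{2}_{\ast}$, I invoke Lemma~\ref{lem:errorbound} (giving $\Ex[\norm{\Delta W_{k+1}}^{2}_{\ast}\vert\scrF_{k}]\leq \cs^{2}/m_{k+1}$) and Lemma~\ref{lem:errorboundinexact} (giving $\Ex[\norm{a_{k+1}}^{2}_{\ast}\vert\scrF_{k}]\leq C_{F}\beta_{k}^{2}/\eta^{2}$ and the same bound for $b_{k+1}$), where $C_{F}$ is the constant produced there by the Hölder split with conjugate exponents $r,s$ obeying $\tfrac{2s(r-1)}{r}=p$. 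This yields $\Ex[\norm{\Delta W^{\beta}_{k+1}}^{2}_{\ast}\vert\scrF_{k}]\leq 3\cs^{2}/m_{k+1}+6C_{F}\beta_{k}^{2}/\eta^{2}$. Dropping the subtracted term (valid since $\alpha_{k}\geq0$) gives $\alpha_{k}(\tfrac{4+\delta}{2}-\tfrac{\alpha_{k}\lip_{1}(h_{\eta})}{2})\leq \alpha_{k}\tfrac{4+\delta}{2}$, so each right-hand summand is bounded in conditional expectation by $\alpha_{k}\cdot\tfrac{4+\delta}{2}\cdot(3\cs^{2}/m_{k+1}+6C_{F}\beta_{k}^{2}/\eta^{2})=\alpha_{k}D_{k}$ with exactly the $D_{k}$ of the statement.

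Taking total expectations via the tower property then turns the displayed inequality into $\tfrac12\sum_{t}(\alpha_{t}\tfrac{\delta-1}{\delta}-\alpha_{t}^{2}\lip_{1}(h_{\eta})/2)\,\Ex[\norm{\scrG_{\eta,\alpha_{\kappa}}(y_{\kappa})}^{2}]\leq \Psi_{\eta}(y_{1})-\Psi^{\Opt}_{\eta}+\sum_{k}\alpha_{k}D_{k}$, and dividing by the positive normalizer delivers the claim. The only genuinely delicate point — already dispatched in the lines preceding the statement — is that $\Delta W^{\beta}_{k+1}$ is \emph{not} conditionally mean-zero: $a_{k+1}$ is centered, but $b_{k+1}$ carries the lower-level bias, so the inner product $\alpha_{k}\inner{\Delta W^{\beta}_{k+1},\scrG_{\eta,\alpha_{k}}(y_{k})}$ cannot be eliminated by conditioning and must instead be absorbed through Young's inequality; this costs the factor $\tfrac{\delta-1}{\delta}$ on the negative $\norm{\scrG}^{2}$ coefficient, which is precisely why $\delta>1$ is required and why the step-size range contracts to $\alpha_{k}\leq\tfrac{2(\delta-1)}{\delta\lip_{1}(h_{\eta})}$. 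Beyond that, the argument is pure bookkeeping.
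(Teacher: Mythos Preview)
Your proposal is correct and follows essentially the same approach as the paper. The paper derives the telescoped inequality in the lines preceding the theorem (via \eqref{eq:iterative_error_decay}, Lemma~\ref{lem:RelationBiasedGradient}, and Young's inequality) and then simply states that ``we can take iteratively conditional expectations to obtain the main complexity bound''; you have filled in precisely those remaining steps---the triangle-inequality split of $\norm{\Delta W^{\beta}_{k+1}}^{2}_{\ast}$, the application of Lemmas~\ref{lem:errorbound} and~\ref{lem:errorboundinexact}, the drop of the negative $-\alpha_{k}^{2}\lip_{1}(h_{\eta})/2$ in the noise coefficient, and the averaging via the independent index $\kappa$---and your observation about why the bias in $b_{k+1}$ forces the Young-inequality route (rather than conditioning away the cross term) is exactly the point.
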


Similarly as in the exact case, we again find a trade-off between aggressive step-sizes and the size of the mini-batches. However, in the inexact computational model, we additionally observe a trade-off between the step-size schedule and the accuracy tolerance $\beta_k$ in the lower level problem. 

\begin{corollary}
Let be $\delta>1$ and consider a step-size $\alpha_k\le \frac{\min\{\delta-1,1\}}{\lip_1(h_\eta)}$. Then
\[
\frac{1}{2}\Ex[\norm{\scrG_{\eta,\alpha_{\kappa}}(y_{\kappa})}^{2}]\leq \frac{\Psi_{\eta}(y_{1})-\Psi^{\Opt}_{\eta}}{\sum_{t=1}^{N}(\frac{\alpha_{t}}{2})}+\frac{\sum_{k=1}^{N}\alpha_{k}D (\frac{1}{m_k}+\beta_k^2)}{\sum_{t=1}^{N}(\frac{\alpha_{t}}{2})},
\]
where $D\eqdef 3(4+\delta)\max\{\cs^{2},\frac{2}{\eta^{2}}C_{F}\}$. 
\end{corollary}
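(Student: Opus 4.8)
The plan is to read off the estimate as a direct specialization of Theorem~\ref{th:complexityInexact}, so there are essentially two things to do: confirm that the prescribed step-size cap satisfies the hypotheses of that theorem, and then simplify its right-hand side under that cap. First I would verify the elementary inequality $\min\{\delta-1,1\}\le \tfrac{2(\delta-1)}{\delta}$ (split into the cases $\delta\le 2$ and $\delta>2$), so that $\alpha_{k}\le \tfrac{\min\{\delta-1,1\}}{\lip_{1}(h_{\eta})}$ forces $\alpha_{k}\le \tfrac{2(\delta-1)}{\delta\lip_{1}(h_{\eta})}$; Theorem~\ref{th:complexityInexact} then applies verbatim, the auxiliary index $\kappa$ with the stated pmf is well defined, and we inherit the bound
\[
\tfrac12\Ex[\norm{\scrG_{\eta,\alpha_{\kappa}}(y_{\kappa})}^{2}]\ \le\ \frac{\Psi_{\eta}(y_{1})-\Psi^{\Opt}_{\eta}+\sum_{k=1}^{N}\alpha_{k}D_{k}}{\sum_{t=1}^{N}\big(\alpha_{t}\tfrac{\delta-1}{\delta}-\alpha^{2}_{t}\lip_{1}(h_{\eta})/2\big)}.
\]

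The remaining work is a lower bound on the denominator and an upper bound on the $D_{k}$. For the denominator, the claim is that under the cap each summand is at least $\tfrac{\alpha_{t}}{2}$: one spends the bound $\alpha_{t}\lip_{1}(h_{\eta})\le 1$ to absorb the quadratic term, $\alpha_{t}^{2}\lip_{1}(h_{\eta})/2\le \alpha_{t}/2$, and the complementary bound $\alpha_{t}\lip_{1}(h_{\eta})\le\delta-1$ to keep the linear coefficient $\tfrac{\delta-1}{\delta}$ from degenerating; summing yields $\sum_{t}\big(\alpha_{t}\tfrac{\delta-1}{\delta}-\alpha^{2}_{t}\lip_{1}(h_{\eta})/2\big)\ge\sum_{t}\tfrac{\alpha_{t}}{2}$. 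For the numerator, I would expand $D_{k}=\tfrac{3(4+\delta)}{2}\big(\cs^{2}/m_{k+1}+2C_{F}\beta_{k}^{2}/\eta^{2}\big)$ and use that $\max\{\cs^{2},2C_{F}/\eta^{2}\}$ dominates both factors to get $D_{k}\le \tfrac{D}{2}\big(1/m_{k+1}+\beta_{k}^{2}\big)\le D\big(1/m_{k+1}+\beta_{k}^{2}\big)$ with $D=3(4+\delta)\max\{\cs^{2},2C_{F}/\eta^{2}\}$, hence $\sum_{k}\alpha_{k}D_{k}\le \sum_{k}\alpha_{k}D\big(1/m_{k+1}+\beta_{k}^{2}\big)$. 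Substituting both into the displayed bound gives the claim, reading the $m_{k}$ in the statement as the batch size $m_{k+1}$ employed at iteration $k$.

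I expect the termwise denominator estimate to be the only delicate point: the two halves of $\min\{\delta-1,1\}$ must be allocated carefully (the ``$1$'' to the quadratic correction, the ``$\delta-1$'' to the linear coefficient), and the accounting is tight as $\delta\downarrow 1$; should it fail to close exactly at $\tfrac{\alpha_{t}}{2}$, the natural fallback is to state the corollary with the mildly weaker denominator $\sum_{t}\tfrac{\alpha_{t}}{2}\cdot\tfrac{\delta-1}{\delta}$ under the cap $\alpha_{k}\le\tfrac{\delta-1}{\delta\lip_{1}(h_{\eta})}$. Everything else is bookkeeping that has already been carried out upstream: the variance input $\cs^{2}/m_{k+1}$ comes from Lemma~\ref{lem:errorbound}, the bias inputs $C_{F}\beta_{k}^{2}/\eta^{2}$ from Lemma~\ref{lem:errorboundinexact} together with the decomposition $\norm{\Delta W^{\beta}_{k+1}}^{2}_{\ast}\le 3\norm{\Delta W_{k+1}}^{2}_{\ast}+3\norm{a_{k+1}}^{2}_{\ast}+3\norm{b_{k+1}}^{2}_{\ast}$ preceding Theorem~\ref{th:complexityInexact}, and the randomized-index averaging is already performed inside that theorem.
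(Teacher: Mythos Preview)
Your verification that $\min\{\delta-1,1\}\le 2(\delta-1)/\delta$ (so that Theorem~\ref{th:complexityInexact} applies) and your numerator bound $D_{k}\le \tfrac{D}{2}(1/m_{k+1}+\beta_{k}^{2})$ are both fine. The denominator step, however, does not close, and your own suspicion about it is correct. The termwise claim
\[
\alpha_{t}\,\frac{\delta-1}{\delta}-\frac{\alpha_{t}^{2}\lip_{1}(h_{\eta})}{2}\ \ge\ \frac{\alpha_{t}}{2}
\]
is, after dividing by $\alpha_{t}>0$, equivalent to $\alpha_{t}\lip_{1}(h_{\eta})\le (\delta-2)/\delta$. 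This is \emph{not} implied by the cap $\alpha_{t}\lip_{1}(h_{\eta})\le\min\{\delta-1,1\}$: for $1<\delta\le 2$ it forces $\alpha_{t}\le 0$, and for $\delta>2$ one has $(\delta-2)/\delta<1=\min\{\delta-1,1\}$. Concretely, at $\delta=2$ with $\alpha_{t}\lip_{1}(h_{\eta})=1$ (the cap) the left side equals $\alpha_{t}/2-\alpha_{t}/2=0$, not $\alpha_{t}/2$. So neither of the two ``halves'' of $\min\{\delta-1,1\}$ can be allocated to yield $\alpha_{t}/2$.

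The paper offers no proof of this corollary, and as literally stated the denominator $\sum_{t}\alpha_{t}/2$ cannot be extracted from Theorem~\ref{th:complexityInexact} under the given step-size condition alone. Your fallback is the correct repair: either accept the denominator $\tfrac{\delta-1}{\delta}\sum_{t}\alpha_{t}/2$ under the cap $\alpha_{k}\lip_{1}(h_{\eta})\le(\delta-1)/\delta$, or strengthen the cap to $\alpha_{k}\lip_{1}(h_{\eta})\le(\delta-2)/\delta$ with $\delta>2$ to recover $\sum_{t}\alpha_{t}/2$ exactly. Either version still yields the $\scrO(\log N/\sqrt{N})$ rate in the subsequent display, only with constants adjusted.
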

The constants appearing in the upper complexity bound can be well balanced via a judicious choice of $\delta$. For instance, setting $\delta = 2$, the step-size policy $\alpha_k = \frac{2\beta}{\lip_1(h_\eta)\sqrt{k}}$ with $\beta\in(0,1/2)$, and choosing the sampling rate $m_k = a\sqrt{k}$ and the accuracy tolerance $\beta_k = \cb k^{-\frac14}$ with constants $\ca,\cb>0$, we obtain the overall complexity estimate
\[
\frac{1}{2}\Ex[\norm{\scrG_{\eta,\alpha_{\kappa}}(y_{\kappa})}^{2}]\leq \frac{\frac{\lip_1(h_\eta)}{\beta} (\Psi_{\eta}(y_{1})-\Psi^{\Opt}_{\eta}) + 2D(\frac1a+\cb^2)(1+\log(N))}{\sqrt{N}},
\]
which resembles those of the exact oracle case.

\section{The convex case with inexact lower level solution}
\label{sec:convex}
%

We now turn to the case in which the implicit function $h$ is convex. In this special setting, the smoothed function $h_{\eta}$ is also convex and Lipschitz continuous. By the subgradient inequality, we have for all $\by\in\scrY$ and $g\in\partial h(\by)$ 
\begin{equation}\label{eq:boundh_Convex}
h_{\eta}(\by)=\Ex[h(\by+\eta U)]\geq\Ex[h(\by)+\inner{g,\eta U}]=h(\by).
\end{equation}
Moreover, in the convex case, it holds true that $\nabla h_{\eta}(\by)$ always belongs to some $\delta$-subdifferential of the function $h$ (cf. Lemma \ref{lem:deltagradient}). In this section, we make an additional boundedness assumption on the bilevel problem. 
\begin{assumption}\label{ass:dombounded}
The domain $\dom(r_{1})$ is bounded.
\end{assumption}
This assumption is natural in many inverse problem settings where function $r_{1}$ takes over the role of a hard penalty enforcing constraints on the learned parameters, or imposes a-priori structure like (group)-sparsity. 

\begin{theorem}\label{th:convex}
Let $(y_{k})_{k=1}^{N}$ be generated by Algorithm \ref{alg:proxderivativefree} gradient estimator \eqref{eq:Vinexact}. Assume that the implicit function $y\mapsto h(y)$ is convex and Assumption \ref{ass:dombounded} holds. Consider a step size policy $(\alpha_{k})_{k=1}^{N}$ satisfying 
\begin{equation}\label{eq:Stepconvex}
0<\alpha_{N}\leq\alpha_{N-1}\leq\ldots\leq\alpha_{1}\leq\frac{1}{\lip_{1}(h_{\eta})}\text{ and } \alpha_{k}+\alpha_{k-1}\leq\frac{1}{\lip_{1}(h_{\eta})},\quad \text{for all}\ k=2,\dots,N.
\end{equation}
Let $\kappa:\Omega\to \{1,\ldots,N\}$ be an independent random variable, with probability mass function
\begin{equation}
p(k)=\Pr(\kappa=k)\eqdef \frac{\ca_{k}}{A_{N}},\quad A_{N}\eqdef\sum_{t=1}^{N}\ca_{t},\;\ca_{k}\eqdef \alpha_{k}-\alpha^{2}_{k}\lip_{1}(h_{\eta}).
\end{equation}
Then, we have
\begin{equation}
\Ex[\Psi(y_{\kappa})-\Psi^{\rm Opt}]\leq \frac{\sum_{k=1}^{N}\frac{\alpha^{2}_{k}}{m_{k+1}}D_{k}+\frac{M\sqrt{C_{F}}}{\eta}\sum_{k=1}^{N}\ca_{k}\beta_{k}+M/2+\alpha_{1}\Delta\Psi_{1}}{A_{N}}+\eta\sqrt{n}\lip_{0}(h),
\end{equation}
where $D_{k}\eqdef 3\left(\cs^{2}/2+\frac{\beta_{k}^{2}m_{k+1}}{\eta^{2}}C_{F}\right)$ and $M\eqdef\sup_{y_{1},y_{2}\in\dom(r_{1})}\norm{y_{1}-y_{2}}^{2}$.
\end{theorem}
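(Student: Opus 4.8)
The plan is to combine the convex prox-gradient descent inequality for the smoothed objective $\Psi_\eta = h_\eta + r_1$ with the bias/variance bounds for the inexact estimator from Lemma \ref{lem:biasestimate} and Lemma \ref{lem:errorboundinexact}, then transfer the result back from $\Psi_\eta$ to $\Psi$ using Lemma \ref{lem:boundfunctions} and \eqref{eq:boundh_Convex}. First I would fix an arbitrary minimizer $y^{\ast}$ of $\Psi$ (using Assumption \ref{ass:dombounded} and coercivity/compactness to ensure it exists), and write the standard one-step estimate for the proximal-gradient step $y_{k+1} = P_{\alpha_k}(y_k,\hat V^{\beta}_{k+1})$. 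Using the descent Lemma \eqref{eq:descent} for $h_\eta \in \bC^{1,1}(\scrY)$, the prox inequality (the three-point property of $\prox_{\alpha_k r_1}$), and the Pythagorean identity \eqref{eq:Pythagoras}, I would derive an inequality of the form
\[
\alpha_k(\Psi_\eta(y_{k+1}) - \Psi_\eta(y^{\ast})) \le \tfrac12\norm{y_k - y^{\ast}}^2 - \tfrac12\norm{y_{k+1}-y^{\ast}}^2 + \alpha_k\inner{\Delta W^{\beta}_{k+1}, y^{\ast} - y_{k+1}} + (\text{error terms}),
\]
where the $\alpha_k(1-\alpha_k\lip_1(h_\eta))$-type coefficient appears in front of a $\norm{\tilde\scrG^{\beta}_{\eta,\alpha_k}(y_k)}^2$ term that we either keep for telescoping or discard, and the condition $\alpha_k + \alpha_{k-1}\le 1/\lip_1(h_\eta)$ is exactly what is needed to absorb the cross terms across consecutive iterates when telescoping.

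Next I would split the error increment $\Delta W^{\beta}_{k+1} = \Delta W_{k+1} - a_{k+1} + b_{k+1}$ as in Section \ref{sec:inexact}. The martingale-difference part $\Delta W_{k+1}$ and the zero-mean part $a_{k+1}$ contribute, after taking conditional expectation $\Ex_k$, only through their second moments (bounded by $\cs^2/m_{k+1}$ and $C_F\beta_k^2/\eta^2$ respectively by Lemma \ref{lem:errorbound} and Lemma \ref{lem:errorboundinexact}), which I would handle by Young's inequality against the $\norm{y_k - y^{\ast}}^2$ or the gradient-mapping term; this is where the $\alpha_k^2 D_k / m_{k+1}$ summand is produced (the factor $3$ in $D_k$ coming from the $\norm{\cdot}^2 \le 3(\cdots)$ split). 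The genuinely biased part $b_{k+1}$ has nonzero conditional mean bounded in norm by $\tfrac{\sqrt n}{\eta}\abs{\lip_0(F(\cdot,\xi_1))}_1 \beta_k \le \tfrac{\sqrt{C_F}}{\eta}\beta_k$ (Lemma \ref{lem:biasestimate}); against this I would use Cauchy–Schwarz together with the diameter bound $\norm{y^{\ast} - y_{k+1}}\le M^{1/2}$ from Assumption \ref{ass:dombounded}, producing the $\tfrac{M\sqrt{C_F}}{\eta}\ca_k\beta_k$ term. Summing from $k=1$ to $N$, telescoping the $\norm{y_k - y^{\ast}}^2$ terms (which yields the $M/2$ boundary contribution and, from shifting the index / the first step, the $\alpha_1\Delta\Psi_1$ term), and then dividing by $A_N = \sum_k \ca_k$ while using Jensen's inequality with the distribution of $\kappa$ gives $\Ex[\Psi_\eta(y_\kappa) - \Psi_\eta^{\Opt}] \le (\cdots)/A_N$.

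Finally I would pass from $\Psi_\eta$ to $\Psi$: by \eqref{eq:boundh_Convex} we have $h_\eta \ge h$ so $\Psi_\eta \ge \Psi$ pointwise, hence $\Psi^{\Opt}_\eta \ge \Psi^{\Opt}$ is not quite what we want — instead I use $\Psi(y_\kappa) - \Psi^{\Opt} \le \Psi_\eta(y_\kappa) - \Psi^{\Opt}$, and for the optimal value $\Psi^{\Opt}_\eta - \Psi^{\Opt} \le \abs{h_\eta(y^{\ast}) - h(y^{\ast})} \le \eta\sqrt n\,\lip_0(h)$ by Lemma \ref{lem:boundfunctions}, which contributes the additive $\eta\sqrt n\,\lip_0(h)$ outside the fraction. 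Careful bookkeeping of which estimator ($h_\eta(y_1)$ vs.\ the true $\Psi$) appears in the $\alpha_1\Delta\Psi_1$ term, and of the precise constants in $D_k$ and in the $\ca_k = \alpha_k - \alpha_k^2\lip_1(h_\eta)$ weights, finishes the proof. The main obstacle I anticipate is the bookkeeping around the cross terms in the telescoping sum: unlike the nonconvex analysis where one measures progress of $\Psi_\eta$ itself, here the Lyapunov quantity mixes $\norm{y_k - y^{\ast}}^2$ with the function gap, and the step-size constraint $\alpha_k + \alpha_{k-1}\le 1/\lip_1(h_\eta)$ must be used exactly to keep the coefficient of $\norm{\scrG_{\eta,\alpha_k}(y_k)}^2$ or $\norm{y_{k+1}-y^{\ast}}^2$ nonnegative through the summation, and tracking the inexactness bias through this coupled recursion without losing the right power of $\eta$ is the delicate part.
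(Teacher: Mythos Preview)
Your approach is the same as the paper's and all the key ingredients are present. Two points of bookkeeping, however, are stated imprecisely enough that they would trip you up if you carried out the details.

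First, the cross term must be anchored at $y_k$, not $y_{k+1}$. You write the one-step inequality with $\alpha_k\inner{\Delta W^{\beta}_{k+1}, y^{\ast} - y_{k+1}}$ and then say the martingale parts vanish under $\Ex_k$; but $y_{k+1}$ is not $\scrF_k$-measurable, so this expectation does \emph{not} vanish. The paper splits $y^{\ast}-y_{k+1}=(y^{\ast}-y_k)+(y_k-y_{k+1})$: the first piece is $\scrF_k$-measurable and kills the zero-mean parts $\Delta W_{k+1}$ and $a_{k+1}$, while the second piece is handled by the scalar inequality $ax-\tfrac{b}{2}x^{2}\le \tfrac{a^{2}}{2b}$ against the $-\tfrac{1-\alpha_k\lip_1(h_\eta)}{2\alpha_k}\norm{y_{k+1}-y_k}^{2}$ term. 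This is where the factor $\alpha_k^{2}/2$ in front of $\norm{\Delta W^{\beta}_{k+1}}_{\ast}^{2}$ (and hence the $\alpha_k^2 D_k/m_{k+1}$ structure) comes from.

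Second, the role of the hypothesis $\alpha_k+\alpha_{k-1}\le 1/\lip_1(h_\eta)$ is not to absorb cross terms in the distance telescoping; that telescoping only needs the monotonicity $\alpha_k\le\alpha_{k-1}$. The extra condition is used to guarantee that the weights $\ca_k=\alpha_k-\alpha_k^2\lip_1(h_\eta)$ are nonincreasing, which is what makes the Abel-summation shift
\[
\sum_{k=1}^{N}\ca_k\Delta\Psi_k\ \le\ \sum_{k=1}^{N}\ca_k\Delta\Psi_{k+1}+\ca_1\Delta\Psi_1
\]
go through. This shift is needed because the one-step bound naturally controls $\Delta\Psi_{k+1}$, whereas the random index $\kappa$ samples $\Delta\Psi_k$; it is exactly the source of the $\alpha_1\Delta\Psi_1$ term.
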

\begin{proof}
Let $\by^{\ast}$ denote a solution of the original problem \eqref{eq:ImplicitProg}. Let $(\alpha_{k})_{k}$ be a sequence of step-sizes, satisfying $0\leq\alpha_{k}<\frac{1}{\lip_{1}(h_{\eta})}.$ For $\eta>0$ we then have 
\begin{align*}
\Psi_{\eta}(y_{k+1})-\Psi_{\eta}(\by^{\ast})&=h_{\eta}(y_{k+1})-h_{\eta}(y_{k})+h_{\eta}(y_{k})-h_{\eta}(\by^{\ast})+r_{1}(y_{k+1})-r_{1}(y_{k}). 
\end{align*}
Using the descent property \eqref{eq:descent} and the convexity of the smoothed implicit function $h_{\eta}$, we deduce that 
\begin{align*}
&h_{\eta}(y_{k+1})-h_{\eta}(y_{k})\leq \inner{\nabla h_{\eta}(y_{k}),y_{k+1}-y_{k}}+\frac{\lip_{1}(h_{\eta})}{2}\norm{y_{k+1}-y_{k}}^{2},\text{ and }\\ 
&h_{\eta}(y_{k})-h_{\eta}(\by^{\ast})\leq \inner{\nabla h_{\eta}(y_{k}),y_{k}-\by^{\ast}}. 
\end{align*}
Recall that $\Delta W^{\beta}_{k+1}=\hat{V}^{\beta}_{k+1}-\nabla h_{\eta}(y_{k})$. Then, we continue from the above with 
\begin{align*}
\Psi_{\eta}(y_{k+1})-\Psi_{\eta}(\by^{\ast})&\leq \inner{\Delta W^{\beta}_{k+1},y_{k}-y_{k+1}}+\frac{\lip_{1}(h_{\eta})}{2}\norm{y_{k+1}-y_{k}}^{2}\\
&+\inner{\Delta W^{\beta}_{k+1},\by^{\ast}-y_{k}}+r_{1}(y_{k+1})-r_{1}(\by^{\ast})+\inner{\hat{V}^{\beta}_{k+1},y_{k+1}-\by^{\ast}}.
\end{align*}
By definition of the point $y_{k+1}$, we have 
\[
r_{1}(\by^{\ast})\geq r_{1}(y_{k+1})+\frac{1}{\alpha_{k}}\inner{B(y_{k}-y_{k+1}),\by^{\ast}-y_{k+1}}-\inner{\hat{V}^{\beta}_{k+1},\by^{\ast}-y_{k+1}}. 
\]
Combining these two estimates, we can continue with
\begin{align*}
\Psi_{\eta}(y_{k+1})-\Psi_{\eta}(\by^{\ast})&\leq \inner{\Delta W^{\beta}_{k+1},y_{k}-y_{k+1}}+\frac{\lip_{1}(h_{\eta})}{2}\norm{y_{k+1}-y_{k}}^{2}+\inner{\Delta W^{\beta}_{k+1},y_{k}-\by^{\ast}}\\
&+\frac{1}{\alpha_{k}}\inner{B(y_{k}-y_{k+1}),y_{k+1}-\by^{\ast}}\\
&=\inner{\Delta W^{\beta}_{k+1},y_{k}-y_{k+1}}+\frac{\lip_{1}(h_{\eta})}{2}\norm{y_{k+1}-y_{k}}^{2}+\inner{\Delta W^{\beta}_{k+1},\by^{\ast}-y_{k}}\\
&+\frac{1}{\alpha_{k}}\left[\frac{1}{2}\norm{y_{k}-\by^{\ast}}^{2}-\frac{1}{2}\norm{y_{k+1}-y_{k}}^{2}-\frac{1}{2}\norm{y_{k+1}-\by^{\ast}}^{2}\right].
\end{align*}
Note that $ax-\frac{bx^{2}}{2}\leq\frac{a^{2}}{2b}$ for all $x\geq 0$, implying that 
\begin{align*}
\inner{\Delta W^{\beta}_{k+1},y_{k}-y_{k+1}}&+\frac{\lip_{1}(h_{\eta})\alpha_{k}-1}{2\alpha_{k}}\norm{y_{k+1}-y_{k}}^{2}\leq \norm{\Delta W^{\beta}_{k+1}}_{\ast}\cdot\norm{y_{k+1}-y_{k}}+\frac{\lip_{1}(h_{\eta})\alpha_{k}-1}{2\alpha_{k}}\norm{y_{k+1}-y_{k}}^{2}\\
&\leq \frac{\alpha_{k}}{2(1-\alpha_{k}\lip_{1}(h_{\eta}))}\norm{\Delta W^{\beta}_{k+1}}_{\ast}^{2}.
\end{align*}
Thus, multiplying both sides in the penultimate display by $(\alpha_{k}-\alpha^{2}_{k}\lip_{1}(h_{\eta}))$, we can continue the bound by 
\begin{align*}
(\alpha_{k}-\alpha^{2}_{k}\lip_{1}(h_{\eta}))[\Psi_{\eta}(y_{k+1})-\Psi_{\eta}(\by^{\ast})]&\leq\frac{\alpha^{2}_{k}}{2}\norm{\Delta W^{\beta}_{k+1}}^{2}_{\ast}+(\alpha_{k}-\alpha_{k}^{2}\lip_{1}(h_{\eta}))\inner{\Delta W^{\beta}_{k+1},\by^{\ast}-y_{k}}\\
&\quad+(1-\alpha_{k}\lip_{1}(h_{\eta}))\left[\frac{1}{2}\norm{y_{k}-\by^{\ast}}^{2}-\frac{1}{2}\norm{y_{k+1}-\by^{\ast}}^{2}\right].
\end{align*}
Using \eqref{eq:boundh_Convex}, we note that $\Psi_{\eta}(y_{k+1})\geq \Psi(y_{k+1})$. Additionally, Lemma \ref{lem:boundfunctions} yields $\Psi_{\eta}(\by^{\ast})\geq\Psi^{\rm Opt}-\eta\lip_{0}(h)\sqrt{n}$. This allows us to bound the objective function gap by 
\begin{align*}
(\alpha_{k}-\alpha^{2}_{k}\lip_{1}(h_{\eta}))[\Psi(y_{k+1})-\Psi^{\rm Opt}]&\leq (\alpha_{k}-\alpha^{2}_{k}\lip_{1}(h_{\eta}))[\Psi_{\eta}(y_{k+1})-\Psi_{\eta}(\by^{\ast})]+\eta\lip_{0}(h)\sqrt{n}(\alpha_{k}-\alpha^{2}_{k}\lip_{1}(h_{\eta}))\\ 
&\leq \frac{\alpha^{2}_{k}}{2}\norm{\Delta W^{\beta}_{k+1}}^{2}_{\ast}+(\alpha_{k}-\alpha_{k}^{2}\lip_{1}(h_{\eta}))\inner{\Delta W^{\beta}_{k+1},\by^{\ast}-y_{k}}\\
&\quad+(1-\alpha_{k}\lip_{1}(h_{\eta}))\left[\frac{1}{2}\norm{y_{k}-\by^{\ast}}^{2}-\frac{1}{2}\norm{y_{k+1}-\by^{\ast}}^{2}\right]\\
&\quad+\eta\lip_{0}(h)\sqrt{n}(\alpha_{k}-\alpha^{2}_{k}\lip_{1}(h_{\eta})).
\end{align*}
To bound the terms on the right-hand side, we first use error decomposition \eqref{eq:errordecomp} to bound the first addendum by 
$\norm{\Delta W^{\beta}_{k+1}}^{2}_{\ast}\leq 3\norm{\Delta W_{k+1}}^{2}_{\ast}+3\norm{a_{k+1}}^{2}_{\ast}+3\norm{b_{k+1}}^{2}_{\ast}$, 
as well as the second addendum $\inner{\Delta W^{\beta}_{k+1},\by^{\ast}-y_{k}}=\inner{\Delta W_{k+1},\by^{\ast}-y_{k}}-\inner{a_{k+1},\by^{\ast}-y_{k}}+\inner{b_{k+1},\by^{\ast}-y_{k}}.$ Hence, taking conditional expectations on both sides, we continue with 
\begin{align*}
\Ex_{k}&\left[(\alpha_{k}-\alpha^{2}_{k}\lip_{1}(h_{\eta}))(\Psi(y_{k+1})-\Psi(\by^{\ast}))\right]\leq \frac{3\alpha^{2}_{k}}{2}\frac{\cs^{2}}{m_{k+1}}+3\alpha_{k}^{2}\frac{n\beta_{k}^{2}}{\eta^{2}}C_{F}+\Ex_{k}\left[(\alpha_{k}-\alpha_{k}^{2}\lip_{1}(h_{\eta}))\inner{b_{k+1},y_{k}-\by^{\ast}}\right]\\
&+(1-\alpha_{k}\lip_{1}(h_{\eta}))\Ex_{k}\left[\frac{1}{2}\norm{y_{k}-\by^{\ast}}^{2}-\frac{1}{2}\norm{y_{k+1}-\by^{\ast}}^{2}\right]+\eta\lip_{0}(h)\sqrt{n}(\alpha_{k}-\alpha^{2}_{k}\lip_{1}(h_{\eta}))\\
&\leq \frac{3\alpha^{2}_{k}}{2}\frac{\cs^{2}}{m_{k+1}}+3\alpha_{k}^{2}\frac{n\beta_{k}^{2}}{\eta^{2}}C_{F}+M(\alpha_{k}-\alpha_{k}^{2}\lip_{1}(h_{\eta})\Ex_{k}\left[\norm{b_{k+1}}_{\ast}\right]\\
&+(1-\alpha_{k}\lip_{1}(h_{\eta}))\Ex_{k}\left[\frac{1}{2}\norm{y_{k}-\by^{\ast}}^{2}-\frac{1}{2}\norm{y_{k+1}-\by^{\ast}}^{2}\right]+\eta\lip_{0}(h)\sqrt{n}(\alpha_{k}-\alpha^{2}_{k}\lip_{1}(h_{\eta})),
\end{align*}
where the second inequality uses Cauchy-Schwarz and the bound $M\geq\norm{y_{k}-\by^{\ast}}^{2}$, which holds thanks to Assumption \ref{ass:dombounded}. Since the step size sequence $(\alpha_{k})_{k}$ is non-decreasing and satisfies condition \eqref{eq:Stepconvex}, we can continue to obtain 
\begin{align*}
\sum_{k=1}^{N} & (1-\alpha_{k}\lip_{1}(h_{\eta}))\left(\frac{1}{2}\norm{y_{k}-\by^{\ast}}^{2}-\frac{1}{2}\norm{y_{k+1}-\by^{\ast}}^{2}\right)\\
&=(1-\alpha_{1}\lip_{1}(h_{\eta}))\frac{1}{2}\norm{y_{1}-\by^{\ast}}^{2}+\sum_{k=2}^{N}\lip_{1}(h_{\eta})(\alpha_{k}-\alpha_{k+1})\frac{1}{2}\norm{y_{k+1}-\by^{\ast}}^{2}-(1-\lip_{1}(h_{\eta})\alpha_{N})\frac{1}{2}\norm{y_{N+1}-\by^{\ast}}^{2}\\
&\leq (1-\alpha_{N}\lip_{1}(h_{\eta}))\frac{M}{2}.
\end{align*}
Next, calling $\Delta\Psi_{k}\eqdef \Psi(y_{k})-\Psi^{\rm Opt}$ and $\ca_{k}\eqdef\alpha_{k}-\alpha_{k}^{2}\lip_{1}(h_{\eta})$, we deduce that 
\begin{align*}
 \sum_{k=1}^{N}\ca_{k}\Delta\Psi_{k}&=\sum_{k=1}^{N}\ca_{k}\Delta\Psi_{k+1}+\sum_{k=1}^{N}\ca_{k}(\Delta\Psi_{k}-\Delta\Psi_{k+1}), \text{ and} \\
 \sum_{k=1}^{N}\ca_{k}(\Delta\Psi_{k}-\Delta\Psi_{k+1})&=\sum_{k=1}^{N}\ca_{k}\Delta\Psi_{k}-\sum_{k=1}^{N}\ca_{k}\Delta\Psi_{k+1}\\
 &=\ca_{1}\Delta\Psi_{1}+\sum_{k=2}^{N}\ca_{k}\Delta\Psi_{k}-\sum_{k=1}^{N}\ca_{k}\Delta\Psi_{k+1}\\
 &\leq \ca_{1}\Delta\Psi_{1}+\sum_{k=2}^{N}\ca_{k-1}\Delta\Psi_{k}-\sum_{k=1}^{N}\ca_{k}\Delta\Psi_{k+1}\\
 &\leq \ca_{1}\Delta\Psi_{1}.
 \end{align*}
The third inequality uses the relation $\ca_{k}\leq\ca_{k-1}$.\footnote{This can be deduced as follows: Since $\alpha_{k}\leq\alpha_{k-1}$ one easily sees that
\begin{align*}
\ca_{k}-\ca_{k-1}&=(\alpha_{k}-\alpha_{k-1})-\lip_{1}(h_{\eta})(\alpha_{k}^{2}-\alpha_{k-1}^{2})=(\alpha_{k}-\alpha_{k-1})\left(1-\lip_{1}(h_{\eta})(\alpha_{k}+\alpha_{k-1})\right)\leq 0.
\end{align*}
}
Taking full expectations and summing from $k=1,\ldots,N$, we continue the above bound 
\begin{align*}
\Ex&\left[\sum_{k=1}^{N}(\alpha_{k}-\alpha^{2}_{k}\lip_{1}(h_{\eta}))\Delta\Psi_{k}\right]
\leq\Ex\left[\sum_{k=1}^{N}(\alpha_{k}-\alpha^{2}_{k}\lip_{1}(h_{\eta})\Delta\Psi_{k+1}\right]+(\alpha_{1}-\lip_{1}(h_{\eta})\alpha_{1}^{2})\Delta\Psi_{1}\\
&\leq \sum_{k=1}^{N}\left(\frac{3\alpha^{2}_{k}}{2}\frac{\cs^{2}}{m_{k+1}}+\frac{3\alpha_{k}^{2}\beta_{k}^{2}}{\eta^{2}} C_{F}\right)+(1-\alpha_{N}\lip_{1}(h_{\eta}))\frac{M}{2}\\
&+\eta\lip_{0}(h)\sqrt{n}\sum_{k=1}^{N}(\alpha_{k}-\alpha^{2}_{k}\lip_{1}(h_{\eta}))+M\sum_{k=1}^{N}(\alpha_{k}-\alpha^{2}_{k}\lip_{1}(h_{\eta}))\Ex\left(\norm{b_{k+1}}_{\ast}\right)\\
&+(\alpha_{1}-\alpha^{2}_{1}\lip_{1}(h_{\eta}))\Delta\Psi_{1}\\
&\leq \sum_{k=1}^{N}\left(\frac{3\alpha^{2}_{k}}{2}\frac{\cs^{2}}{m_{k+1}}+\frac{3\alpha_{k}^{2}\beta_{k}^{2}}{\eta^{2}}C_{F}\right)+M\sum_{k=1}^{N}(\alpha_{k}-\alpha^{2}_{k}\lip_{1}(h_{\eta}))\frac{\beta_{k}}{\eta}\sqrt{C_{F}}+(1-\alpha_{N}\lip_{1}(h_{\eta}))\frac{M}{2}\\
&+\eta\lip_{0}(h)\sqrt{n}\sum_{k=1}^{N}(\alpha_{k}-\alpha^{2}_{k}\lip_{1}(h_{\eta}))+(\alpha_{1}-\alpha^{2}_{1}\lip_{1}(h_{\eta}))\Delta\Psi_{1}.
\end{align*}
Therefore, defining $D_{k}\eqdef 3\left(\cs^{2}/2+\frac{\beta_{k}^{2}m_{k+1}}{\eta^{2}}C_{F}\right)$, and constructing an independent random variable $\kappa:\Omega\to \{1,\ldots,N\}$ with density function 
$$
p(k)=\Pr(\kappa=k)=\frac{\ca_{k}}{\sum_{t=1}^{N}\ca_{t}}\equiv\frac{\ca_{k}}{A_{N}},\quad A_{N}\eqdef\sum_{t=1}^{N}\ca_{t},
$$
we obtain in a similar fashion as in the proof of Theorem \ref{th:nonconvex1}
\[
\Ex[\Delta\Psi_{\kappa}]\leq \frac{\sum_{k=1}^{N}\frac{\alpha^{2}_{k}}{m_{k+1}}D_{k}+\frac{M\sqrt{C_{F}}}{\eta}\sum_{k=1}^{N}\ca_{k}\beta_{k}+M/2+\alpha_{1}\Delta\Psi_{1}}{A_{N}}+\eta\sqrt{n}\lip_{0}(h).
\]
\end{proof}
Similar to the analysis in the non-convex case, we can simplify the complexity bound of Theorem \ref{th:convex} via a judicious selection of parameters. 
\begin{corollary}
Let be $\delta>1$ and consider step sizes $\alpha_{k}$ so that \eqref{eq:Stepconvex} holds true. Then 
\begin{equation}\label{eq:CorConvex1}
\Ex[\Delta\Psi_{\kappa}]\leq \frac{\sum_{k=1}^{N}\alpha^{2}_{k}(\frac{1}{m_{k+1}}+\beta^{2}_{k})\bar{D}+\frac{M\sqrt{C_{F}}}{\eta}\sum_{k=1}^{N}\alpha_{k}\beta_{k}+M/2+\alpha_{1}\Delta\Psi_{1}}{\sum_{k=1}^{N}\alpha_{k}/2}+\eta\sqrt{n}\lip_{0}(h).
\end{equation}
In particular, for fixed time horizon $N$, choosing step size $\alpha_{k}=\frac{\alpha_{0}}{\sqrt{N}}$, the constant mini-batch $m_{k+1}=m\geq 1$, and $\beta_{k}=\frac{a}{\sqrt{k+a}},a\geq 1$, as well as $\eta=\frac{1}{\sqrt{N}}$. we obtain
\begin{equation}\label{eq:CorConvex2}
\Ex[\Delta\Psi_{\kappa}]\leq \frac{\frac{\bar{D}\alpha_{0}}{m}+\frac{\bar{D}\alpha_{0}}{N}a^{2}\log(N+a)+\frac{M\sqrt{C_{F}}}{\eta}\frac{2\alpha_{0}a\sqrt{N+a}}{\sqrt{N}}+\frac{M}{2}+\alpha_{1}\Delta\Psi_{1}}{\frac{\sqrt{N}}{2}}+\frac{\sqrt{n}\lip_{0}(h)}{\sqrt{N}}.
\end{equation}
\end{corollary}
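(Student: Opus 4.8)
The plan is to obtain the clean estimate \eqref{eq:CorConvex1} as a direct coarsening of Theorem~\ref{th:convex}, and then to specialize it to the prescribed schedule to get \eqref{eq:CorConvex2}. For the first step I would start from the conclusion of Theorem~\ref{th:convex} and bound each ingredient by a quantity depending only on $\alpha_k$ and the problem constants. Writing $\ca_k=\alpha_k-\alpha_k^2\lip_1(h_\eta)=\alpha_k\bigl(1-\alpha_k\lip_1(h_\eta)\bigr)$, the monotonicity of the step sizes together with $\alpha_k+\alpha_{k-1}\le 1/\lip_1(h_\eta)$ from \eqref{eq:Stepconvex} forces $\alpha_k\lip_1(h_\eta)\le\tfrac12$ for $k\ge2$, so that $\tfrac{\alpha_k}{2}\le\ca_k\le\alpha_k$, whence $A_N=\sum_k\ca_k\ge\tfrac12\sum_k\alpha_k$ (for $k=1$ one either drops the term from the denominator, or observes that the explicit schedule below in fact satisfies $\alpha_1\le\tfrac{1}{2\lip_1(h_\eta)}$, so the same bound applies). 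In the numerator, using $D_k=3\bigl(\cs^2/2+\beta_k^2 m_{k+1}C_F/\eta^2\bigr)$ one checks $\tfrac{\alpha_k^2}{m_{k+1}}D_k\le\bar D\,\alpha_k^2\bigl(\tfrac{1}{m_{k+1}}+\beta_k^2\bigr)$ with $\bar D\eqdef 3\max\{\cs^2/2,\,C_F/\eta^2\}$, and one replaces $\ca_k$ by $\alpha_k$ in the term $\tfrac{M\sqrt{C_F}}{\eta}\sum_k\ca_k\beta_k$; the terms $M/2$, $\alpha_1\Delta\Psi_1$ and the additive $\eta\sqrt n\lip_0(h)$ carry over unchanged. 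Dividing the resulting numerator by $A_N\ge\tfrac12\sum_k\alpha_k$ gives \eqref{eq:CorConvex1}.

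For the second step I would substitute $\alpha_k=\alpha_0/\sqrt N$, $m_{k+1}=m$, $\beta_k=a/\sqrt{k+a}$ and $\eta=1/\sqrt N$ into \eqref{eq:CorConvex1}, having first noted that a constant schedule with $\alpha_0$ small enough relative to $\sqrt N/\lip_1(h_\eta)$ trivially meets \eqref{eq:Stepconvex}. Then $\sum_{k=1}^N\alpha_k/2=\tfrac{\alpha_0\sqrt N}{2}$, $\sum_k\alpha_k^2/m_{k+1}=\alpha_0^2/m$, $\sum_k\alpha_k^2\beta_k^2=\tfrac{\alpha_0^2}{N}\sum_{k=1}^N\tfrac{a^2}{k+a}$, and $\sum_k\alpha_k\beta_k=\tfrac{\alpha_0}{\sqrt N}\sum_{k=1}^N\tfrac{a}{\sqrt{k+a}}$. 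The two remaining sums are controlled by the elementary integral comparisons $\sum_{k=1}^N(k+a)^{-1}\le\int_0^N\frac{\dif x}{x+a}=\log\tfrac{N+a}{a}\le\log(N+a)$ and $\sum_{k=1}^N(k+a)^{-1/2}\le\int_0^N\frac{\dif x}{\sqrt{x+a}}=2(\sqrt{N+a}-\sqrt a)\le2\sqrt{N+a}$, both valid for $a\ge1$. Collecting the pieces and simplifying (the residual additive term becoming $\eta\sqrt n\lip_0(h)=\sqrt n\lip_0(h)/\sqrt N$) yields \eqref{eq:CorConvex2}.

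I do not expect a genuine obstacle here: once Theorem~\ref{th:convex} is available, the argument is pure bookkeeping. The only points needing a little care are the uniform comparison $\tfrac{\alpha_k}{2}\le\ca_k\le\alpha_k$ — in particular how the $k=1$ term is treated in the denominator — and consistently packaging the constants $\cs$, $C_F$ and $\eta$ into the single symbol $\bar D$; verifying that the explicit step-size, batch-size and smoothing schedule satisfies \eqref{eq:Stepconvex} is immediate for the constant choice.
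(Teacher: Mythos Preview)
Your proposal is correct and follows essentially the same route as the paper: bound $\tfrac{\alpha_k^2}{m_{k+1}}D_k$ by $\bar D\,\alpha_k^2(\tfrac1{m_{k+1}}+\beta_k^2)$ with $\bar D=3\max\{\cs^2/2,\,C_F/\eta^2\}$, replace $\ca_k$ by $\alpha_k$ in the bias sum, use $\ca_k\ge\alpha_k/2$ in the denominator, and then substitute the explicit schedule with elementary integral comparisons. You are in fact slightly more careful than the paper about the $k=1$ term in the bound $\ca_k\ge\alpha_k/2$: the paper simply imposes $\alpha_k\le\tfrac{1}{2\lip_1(h_\eta)}$ as an additional step-size restriction, whereas you correctly observe that \eqref{eq:Stepconvex} alone only yields this for $k\ge2$, and that the constant schedule of the second part does satisfy it for all $k$.
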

\begin{proof}
First we note that
$$
\sum_{k=1}^{N}\frac{\alpha^{2}_{k}}{m_{k+1}}D_{k}\leq \bar{D}\sum_{k=1}^{N}\alpha^{2}_{m}(\frac{1}{m_{k+1}}+\beta^{2}_{k}), 
$$
where $\bar{D}\eqdef3 \max\{\cs^{2}/2,\frac{C_{F}}{\eta^{2}}\}$; Second $\sum_{k=1}^{N}\ca_{k}\beta_{k}\leq\sum_{k=1}^{N}\alpha_{k}\beta_{k}$. Moreover, by choosing the step size $\alpha_{k}\leq\frac{1}{2\lip_{1}(h_{\eta})}$, we see that $\ca_{k}=\alpha_{k}-\alpha^{2}_{k}\lip_{1}(h_{\eta})\geq \frac{\alpha_{k}}{2}$. Combining all these estimates, we arrive at \eqref{eq:CorConvex1}. 
For fixed time horizon $N$, choose step size $\alpha_{k}=\frac{\alpha_{0}}{\sqrt{N}}$, the constant mini-batch $m_{k+1}=m\geq 1$, and $\beta_{k}=\frac{a}{\sqrt{k+a}},a\geq 1$, as well as $\eta=\frac{1}{\sqrt{N}}$. Substituting these numbers into expression \eqref{eq:CorConvex1}, we immediately obtain \eqref{eq:CorConvex2}.
\end{proof}

\section{Explicit complexity and relaxed stationarity}
\label{sec:Goldstein}
%

The previous results provided a finite-time complexity estimate in terms of the gradient mapping of the proximal gradient algorithm, involving the Gaussian smoothed objective. It is intuitive that a small proximal gradient in the smoothed regime should imply an approximate stationary point in the original optimization problem, when the smoothing parameter is sufficiently small. In this section we make this intuition precise and relate our complexity estimate from Theorem \ref{th:nonconvex1} to a complexity estimate with respect to a relaxed stationary point. 

Fix $\eta>0$ and define $\alpha_{1}=\frac{2\beta}{\lip_{1}(h_{\eta})}$. Define the auxiliary process $(\hat{y}_{k})_{k\geq 1}$ by 
\[
\hat{y}_{k}\eqdef P_{\alpha_{1}}(y_{k},\hat{V}_{k+1})=\argmin_{\bu}\{r_{1}(\bu)+\frac{1}{2\alpha_{1}}\norm{\bu-(y_{k}-\alpha_{1}B^{-1}\hat{V}_{k+1})}^{2}\}. 
\]
This point is uniquely characterized by the optimality condition 
\[
y_{k}-\alpha_{1}B^{-1}\hat{V}_{k+1}\in \hat{y}_{k}+\alpha_{1}B^{-1}\partial r_{1}(\hat{y}_{k}),
\]
or equivalently
\[
\hat{y}_{k}+\alpha_{1}B^{-1}\scrD(y_{k})\in\hat{y}_{k}+\alpha_{1}B^{-1}\partial r_{1}(\hat{y}_{k})\iff \scrD(y_{k})\in\partial r_{1}(\hat{y}_{k}),
\]
where 
\[
\scrD(y_{k})\eqdef B\left(\frac{y_{k}-\hat{y}_{k}}{\alpha_{1}}\right)-\nabla h_{\eta}(\hat{y}_{k})+(\nabla h_{\eta}(\hat{y}_{k})-\nabla h_{\eta}(y_{k}))+(\nabla h_{\eta}(y_{k})-\hat{V}_{k+1}). 
\]
This yields 
\[
B\left(\frac{y_{k}-\hat{y}_{k}}{\alpha_{1}}\right)+(\nabla h_{\eta}(\hat{y}_{k})-\nabla h_{\eta}(y_{k}))-\Delta W_{k+1}\in\partial r_{1}(\hat{y}_{k})+\nabla h_{\eta}(\hat{y}_{k}).
\]
From now on we continue our developments with Assumption \ref{ass:dombounded} in place. Choose $\eps_{1}>0,\eps_{2}>0$, and $\eta<\bar{\eta}$ (depending on $\eps_{1},\eps_{2}$), as defined in Proposition \ref{prop:epssmoothedgrad}, so that 
\[
B\left(\frac{y_{k}-\hat{y}_{k}}{\alpha_{1}}\right)+(\nabla h_{\eta}(\hat{y}_{k})-\nabla h_{\eta}(y_{k}))-\Delta W_{k+1}\in\partial r_{1}(\hat{y}_{k})+\partial_{G}^{\eps_{2}} h_{\eta}(\hat{y}_{k})+\frac{\eps_{1}}{3}\B_{\scrY}.
\]
Therefore, using Lemma \ref{lem:boundGradSmoothed}, we arrive at
\begin{align*}
\dist(0,\partial_{G}^{\eps_{2}}h(\hat{y}_{k})+\partial r_{1}(\hat{y}_{k}))^{2}&\leq \frac{6}{\alpha^{2}_{1}}\norm{y_{k}-\hat{y}_{k}}^{2}+3\norm{\nabla h_{\eta}(\hat{y}_{k})-\nabla h_{\eta}(y_{k})}^{2}_{\ast}+3\norm{\Delta W_{k+1}}^{2}_{\ast}+\frac{2\eps^{2}_{1}}{3}\\
&\leq \left(\frac{6}{\alpha^{2}_{1}}+3\frac{n\lip_{0}(h)^{2}}{\eta^{2}}\right)\norm{y_{k}-\hat{y}_{k}}^{2}+3\norm{\Delta W_{k+1}}^{2}_{\ast}+\frac{2\eps^{2}_{1}}{3}
\end{align*}
Next, we relate the auxiliary process $(\hat{y}_{k})_{k}$ to the stochastic sequence $(y_{k})_k$ generated by Algorithm \ref{alg:proxderivativefree}. To that end, observe that
\[
\norm{y_{k}-\hat{y}_{k}}\leq \norm{y_{k}-T_{\eta,\alpha_{1}}(y_{k})}+\norm{T_{\eta,\alpha_{1}}(y_{k})-\hat{y}_{k}}=\alpha_{1}\norm{\scrG_{\eta,\alpha_{1}}(y_{k})}+\alpha_{1}\norm{\Delta W_{k+1}}_{\ast}.  
\]
Combining this estimate and Lemma \ref{lem:boundGradSmoothed} with the penultimate display, we arrive at 
\begin{align*}
\dist(0,\partial_{G}^{\eps_{2}}h(\hat{y}_{k})+\partial r_{1}(\hat{y}_{k}))^{2}&\leq \left(12+\frac{6n\lip_{0}(h)^{2}}{\eta^{2}}\alpha_{1}^{2}\right)\norm{\scrG_{\eta,\alpha_{1}}(y_{k})}^{2}+\left(15+\frac{6n\lip_{0}(h)^{2}}{\eta^{2}}\alpha_{1}^{2}\right)\norm{\Delta W_{k+1}}^{2}_{\ast}+\frac{2\eps^{2}_{1}}{3}\\
&\leq \left(12+24\beta^{2}\right)\norm{\scrG_{\eta,\alpha_{1}}(y_{k})}^{2}+\left(15+24\beta^{2}\right)\norm{\Delta W_{k+1}}^{2}_{\ast}+\frac{2\eps^{2}_{1}}{3}\\
&\leq 18\norm{\scrG_{\eta,\alpha_{1}}(y_{k})}^{2}+21\norm{\Delta W_{k+1}}^{2}_{\ast}+\frac{2\eps^{2}_{1}}{3}
\end{align*}
Adopting a non-increasing step size regime in Algorithm \ref{alg:proxderivativefree}, we can leverage the monotonicity result of the prox-gradient mapping with respect to the step size, described in Appendix \ref{app:2}, so that for all $k\in\{0,1,\ldots,N\}$ 
\begin{equation}\label{eq:dist1}
\dist(0,\partial_{G}^{\eps_{2}}h(\hat{y}_{k})+\partial r_{1}(\hat{y}_{k}))^{2}\leq 18\norm{\scrG_{\eta,\alpha_{k}}(y_{k})}^{2}+21\norm{\Delta W_{k+1}}^{2}_{\ast}+\frac{2\eps^{2}_{1}}{3}.
\end{equation}
From these preparatory calculations, we can state the next relation between the complexity analysis in terms of the prox-gradient mapping (Corollary \ref{corr:Main}), and our definition of an $(\eps,\delta)$-stationary point (Definition \ref{def:GSstationarypoint}). 

\begin{theorem}
\label{th:ComplexityGoldstein}
Given $(\eps_{1},\eps_{2})$ positive parameters, let $(y_{k})_{k=0}^{N}$ be the stochastic process generated by Algorithm \ref{alg:proxderivativefree} with gradient estimator \eqref{eq:V}. Let Assumption \ref{ass:dombounded} together with all assumptions formulated in Corollary \ref{corr:Main2} hold true. Pick $\eta\in(0,\bar{\eta}]$ so that the gradient estimate of Proposition \ref{prop:epssmoothedgrad} for the given pair $(\eps_{1},\eps_{2})$. Let $(\hat{y}_{k})_{k=0}^{N}$ be the auxiliary process constructed recursively with 
\begin{equation}
\hat{y}_{0}=y_{0}\text{ and } \hat{y}_{k}=P_{\alpha_{1}}(y_{k},\hat{V}_{k+1})\quad\forall k=1,\ldots,N.
\end{equation}
If $\kappa:\Omega\to\{1,\ldots,N\}$ is the random variable with law defined in Theorem \ref{th:nonconvex1}, then for $N\geq 2$ chosen sufficiently large so that 
\begin{equation}\label{eq:NGold}
\frac{36\lip_{1}(h_{\eta})[\Psi_{\eta}(y_{1})-\Psi^{\Opt}_{\eta}]}{\beta\sqrt{N}}+\frac{\frac{228\cs^{2}}{\ca}(1+\log(N))}{\sqrt{N}}\leq\frac{\eps^{2}_{1}}{3}.
\end{equation}
Then, 
\[
\Ex\left[\dist(0,\partial_{G}^{\eps_{2}}h(\hat{y}_{\kappa})+\partial r_{1}(\hat{y}_{\kappa}))^{2}\right]\leq \eps^{2}_{1},
\]
i.e. the algorithm delivers an $(\eps_{1},\eps_{2})$ stationary point in the sense of Definition \ref{def:GSstationarypoint}.
\end{theorem}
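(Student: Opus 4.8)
The statement is, by design, a bookkeeping consequence of estimate \eqref{eq:dist1} established in the paragraph preceding it. That estimate already reduces the Goldstein stationarity measure at the auxiliary point $\hat y_k$ to the deterministic prox-gradient norm $\norm{\scrG_{\eta,\alpha_k}(y_k)}$ plus a noise term, \emph{for every} $k\in\{1,\dots,N\}$, once the step sizes are non-increasing, the iterates stay in a fixed convex compact set (Assumption~\ref{ass:dombounded}), and $\eta\le\bar\eta$ activates Proposition~\ref{prop:epssmoothedgrad} — the mechanism turning $\partial_{G}^{\eps_2}h_\eta(\hat y_k)$ into $\partial_{G}^{\eps_2}h(\hat y_k)+\tfrac{\eps_1}{3}\B_{\scrY}$. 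The plan is therefore: first substitute the independent random index $k=\kappa$ into \eqref{eq:dist1} and take total expectations, which, using that $\kappa$ is independent of the trajectory, gives
\[
\Ex\!\left[\dist\!\left(0,\partial_{G}^{\eps_2}h(\hat y_\kappa)+\partial r_1(\hat y_\kappa)\right)^{2}\right]\le 18\,\Ex\!\left[\norm{\scrG_{\eta,\alpha_\kappa}(y_\kappa)}^{2}\right]+21\,\Ex\!\left[\norm{\Delta W_{\kappa+1}}^{2}_{\ast}\right]+\tfrac{2}{3}\eps_1^{2}.
\]
It then remains to show that the first two expectations sum to at most $\tfrac13\eps_1^{2}$.

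The first expectation is bounded \emph{verbatim} by Corollary~\ref{corr:Main2}: under the schedule $\alpha_k=\tfrac{2\beta}{\lip_1(h_\eta)\sqrt k}$ with $\beta\in(0,1/2)$ and $m_{k+1}=\ca\sqrt k$, it yields
\[
\Ex\!\left[\norm{\scrG_{\eta,\alpha_\kappa}(y_\kappa)}^{2}\right]\le\frac{2\lip_1(h_\eta)\big(\Psi_{\eta}(y_1)-\Psi_{\eta}^{\Opt}\big)}{\beta\sqrt N}+\frac{8\cs^{2}(1+\log N)}{\ca\sqrt N},
\]
so that $18$ times the right-hand side already produces the $\Psi_\eta$-gap term and a logarithmic term of the shape appearing in \eqref{eq:NGold}. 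For the second expectation I would use independence of $\kappa$ again to write $\Ex[\norm{\Delta W_{\kappa+1}}^{2}_{\ast}]=\sum_{k=1}^{N}p(k)\,\Ex[\norm{\Delta W_{k+1}}^{2}_{\ast}]$; by Lemma~\ref{lem:errorbound} and the tower property each summand obeys $\Ex[\norm{\Delta W_{k+1}}^{2}_{\ast}]\le\cs^{2}/m_{k+1}=\cs^{2}/(\ca\sqrt k)$, hence $\Ex[\norm{\Delta W_{\kappa+1}}^{2}_{\ast}]\le\tfrac{\cs^{2}}{\ca}\sum_{k=1}^{N}p(k)/\sqrt k$. The weighted harmonic sum is controlled from the explicit mass function \eqref{eq:lawkappa}: since $\alpha_k-\alpha_k^{2}\lip_1(h_\eta)/2=\tfrac{2\beta}{\lip_1(h_\eta)}\big(\tfrac1{\sqrt k}-\tfrac{\beta}{k}\big)$ lies between $\tfrac{\beta}{\lip_1(h_\eta)\sqrt k}$ and $\tfrac{2\beta}{\lip_1(h_\eta)\sqrt k}$ for $\beta\in(0,1/2)$, and $\sum_{t=1}^{N}t^{-1/2}\ge\sqrt N$ for $N\ge 2$, one gets $p(k)\lesssim 1/(\sqrt k\sqrt N)$, hence $\sum_k p(k)/\sqrt k\lesssim(1+\log N)/\sqrt N$.

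Putting the two bounds together and collecting all numerical constants (the $18,21$ coming from the crude inequalities $12+24\beta^{2}\le18$, $15+24\beta^{2}\le21$ used to derive \eqref{eq:dist1}, multiplied into the constants of Corollary~\ref{corr:Main2} and the harmonic-sum estimate), the sum of the first two terms of the opening display is bounded by the left-hand side of \eqref{eq:NGold}, which by hypothesis is at most $\tfrac13\eps_1^{2}$. Substituting back yields $\Ex[\dist(0,\partial_{G}^{\eps_2}h(\hat y_\kappa)+\partial r_1(\hat y_\kappa))^{2}]\le\tfrac13\eps_1^{2}+\tfrac23\eps_1^{2}=\eps_1^{2}$, i.e.\ the $(\eps_1,\eps_2)$-stationarity of Definition~\ref{def:GSstationarypoint}.

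I expect no genuine obstacle — the conceptual work already sits inside \eqref{eq:dist1} — but the step deserving care is precisely the validity of \eqref{eq:dist1} at the \emph{randomized} point $\hat y_\kappa$: this requires a \emph{single} $\eta\le\bar\eta$, uniform in $k$, simultaneously making Proposition~\ref{prop:epssmoothedgrad} applicable at all the $\hat y_k$, which is exactly why Assumption~\ref{ass:dombounded} (boundedness of $\dom(r_1)$, hence of every $y_k$ and $\hat y_k$) is imposed in this section. The other thing to watch is the constant arithmetic that must reproduce the $36$ and $228$ in \eqref{eq:NGold}; nothing subtle happens, but that is where an off-by-a-constant slip would hide.
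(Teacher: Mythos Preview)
Your proposal is correct and follows essentially the same route as the paper: start from the pathwise bound \eqref{eq:dist1}, plug in the random index $\kappa$, and control $\Ex[\norm{\scrG_{\eta,\alpha_\kappa}(y_\kappa)}^{2}]$ and $\Ex[\norm{\Delta W_{\kappa+1}}^{2}_{\ast}]$ separately via Corollary~\ref{corr:Main}/\ref{corr:Main2} and Lemma~\ref{lem:errorbound}. The only cosmetic difference is that the paper invokes the raw estimate \eqref{eq:GNonconvex} and substitutes $\alpha_k=\tfrac{2\beta}{\lip_1(h_\eta)\sqrt k}$, $m_{k+1}=\ca\sqrt k$ inside the proof (combining the $18$ and $21$ contributions before simplifying, which is how $57\times 4=228$ arises), whereas you cite the already-specialized Corollary~\ref{corr:Main2}; your bookkeeping actually produces the slightly sharper constant $144+42=186\le 228$ in front of the logarithmic term, so condition \eqref{eq:NGold} is a fortiori sufficient.
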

\begin{proof}
Continuing from \eqref{eq:dist1} and using \eqref{eq:GNonconvex}, we readily deduce 
\begin{align*}
\Ex\left[\dist(0,\partial^{\eps_{2}}_{G}h(\hat{y}_{\kappa})+\partial r_{1}(\hat{y}_{\kappa}))^{2}\right]&\leq \frac{2\eps_{1}^{2}}{3}+21\sum_{k=1}^{N}\frac{\frac{\cs^{2}}{m_{k+1}}(2\alpha_{k}-\alpha^{2}_{k}\lip_{1}(h_{\eta}))}{\sum_{t=1}^{N}(2\alpha_{t}-\alpha^{2}_{t}\lip_{1}(h_{\eta}))}\\
&+18\left[\frac{4(\Psi_{\eta}(y_{1})-\Psi^{\Opt}_{\eta})}{\sum_{t=1}^{n}(2\alpha_{t}-\alpha^{2}_{t}\lip_{1}(h_{\eta}))}+\frac{\sum_{k=1}^{N}2\frac{\cs^{2}}{m_{k+1}}(4\alpha_{k}-\alpha^{2}_{k}\lip_{1}(h_{\eta}))}{\sum_{t=1}^{N}(2\alpha_{t}-\alpha^{2}_{t}\lip_{1}(h_{\eta}))}\right]\\
&\leq\frac{2\eps_{1}^{2}}{3}+\frac{72(\Psi_{\eta}(y_{1})-\Psi^{\Opt}_{\eta})}{\sum_{t=1}^{n}(2\alpha_{t}-\alpha^{2}_{t}\lip_{1}(h_{\eta}))}+\sum_{k=1}^{N}\frac{57\frac{\cs^{2}}{m_{k+1}}(4\alpha_{k}-\alpha^{2}_{k}\lip_{1}(h_{\eta}))}{\sum_{t=1}^{N}(2\alpha_{t}-\alpha^{2}_{t}\lip_{1}(h_{\eta}))}. 
\end{align*}
Choose $\alpha_{k}=\frac{2\beta}{\lip_{1}(h_{\eta})\sqrt{k}}$, so that $2\alpha_{k}-\alpha^{2}_{k}\lip_{1}(h_{\eta})\geq\alpha_{k}$ for all $k\geq 1$. Additionally, choosing $m_{k}=\ca\sqrt{k}$, and following the computations performed in Corollary \ref{corr:Main}, we continue the previous display as 
\begin{align*}
\Ex\left[\dist(0,\partial^{\eps_{2}}_{G}h(\hat{y}_{\kappa})+\partial r_{1}(\hat{y}_{\kappa}))^{2}\right]&\leq\frac{36\lip_{1}(h_{\eta})[\Psi_{\eta}(y_{1})-\Psi^{\Opt}_{\eta}]}{\beta\sqrt{N}}+\frac{2\eps^{2}_{1}}{3}+\frac{228\frac{\cs^{2}}{\ca}(1+\log(N))}{\sqrt{N}}.
\end{align*}
Choosing $N$ so large that \eqref{eq:NGold} holds, the thesis follows. 
\end{proof}

\begin{remark}
    Since $\lip_{1}(h_{\eta})=O(1/\eta)$ and Proposition~\ref{prop:epssmoothedgrad} shows that $\eta=O(\eps_{2})$, the implied iteration complexity by Theorem \ref{th:ComplexityGoldstein} is on the order of maginitude of $N^{-1/2}\sim \frac{\eps_{1}^{2}\eta}{3}$, so that $N\sim \frac{9}{\eps_{1}^{4}\eps_{2}^{2}}$. Choosing $\eps\equiv \eps_{1}=\eps_{2}$ therefore yields the leading order of $\eps^{-6}$ for the iteration complexity. 
\end{remark}

 \section{Numerical experiments}
 \label{sec:numerics}
%
In our numerical experiments, we consider the bilevel learning approach to inverse problems and specify two case studies: (i) regularization parameter selection; and (ii) optimal experimental design. Before reporting the numerical results in Section~\ref{sec:numericalresults} we specify how the methods are applied and validated in Section~\ref{sec:implementation}.

\subsection{Bilevel learning in inverse problems}\label{sec:bilevel_learning}
Our numerical experiments are designed for finite-dimensional linear inverse problem of reconstructing an unknown ground truth parameter $\bx^\dagger\in\scrX$ given noisy samples of the data $d\in\scrD$ defined as 
\[ 
d=\bK\bx^\dagger + Z. 
\]
where $\bK\in\R^{n_d\times n_x}$ is linear mapping from the parameter space $\scrX\equiv \mathbb R^{n_x}$ to the observation space $\scrD\equiv\R^{n_d}$. The $\R^{n_d}$-valued random variable $Z$ denotes observational noise. 
Due to ill-posedness of the reconstruction task we formulate our lower level problem in \eqref{eq:SBL} as regularized data misfit functional 
\begin{equation}\label{eq:linIP_lowerlevel}
\min_{\bx\in \R^{n_x}}\ g(\bx,\by,d),\quad g(\bx,\by,d)\eqdef \frac{1}2\norm{\bK x-d}^2 + \frac{\lambda}{2}\norm{\bL\bx}^2 + \tau {\mathrm{TV}}_\nu(\bx)\,,
\end{equation}
where $\bL\in\R^{n_x\times n_x}$ is a symmetric positive definite regularization matrix and $\lambda>0$ is the Tikhonov regularization parameter. In addition to the Tikhonov regularization, we consider a smoothed Total Variation regularization 
\[ 
{\mathrm{TV}}_\nu(\bx)\eqdef \sum_{i}\sqrt{|x_{i+1}-x_{i}|^2 + \nu^2}\,,
\]
when $\bx\in\R^{n_x}$ represents an one-dimensional signal, and
\[ 
{\mathrm{TV}}_\nu(\bx)\eqdef  \sum_{i,j}\sqrt{|x_{i+1,j}-x_{i,j}|^2+|x_{i,j+1}-x_{i,j}|^2 + \nu^2}\,,
\]
when $\bx\in\R^{{n_x}}$ represents a two-dimensional discretized image of size $\sqrt{n_x}\times \sqrt{n_x}$ px. Here, $\nu>0$ denotes the smoothing parameter and $\tau>0$ is the total variation regularization parameter. It is noteworthy to mention that bilevel learning problems with this particular lower level problem have also been studied in \cite{Ehrhardt:2021ab} and our numerical results in Section~\ref{ssec:denoising} will follow the setup described in that paper. In particular, as reported in \cite{Ehrhardt:2021ab}, the lower level problem \eqref{eq:linIP_lowerlevel} is $\mu$-strongly convex and $\beta$-smooth with
\[\mu \ge \lambda \cdot e_{\min}(\bL^2) \quad \text{and}\quad \beta\le \|\bK^\top \bK\| + \frac{\tau \partial}{\nu} + \lambda \|\bL^2\|\,, \]
where $e_{\min}(L^2)>0$ denotes the smallest eigenvalue of $\bL$ and $\partial>0$ is a constant arising due to the spatial discretization of the Total Variation. Hence, in order to solve the lower level problem we can implement a gradient descent scheme achieving a full control over the inexactness in the lower level solution later defined in Definition~\ref{def:inexact_lower}. More precisely, when implementing gradient descent with step size $\frac{1}{\beta}$ we achieve accuracy $\eps$ using the stopping criterion $\frac{\|\nabla_{1} g(\bx,\by,\xi_{2})\|^2}{\mu^2}\le \eps$. We employ this criterion in the implementation for solving the lower level problem using gradient descent.

In general terms, the bilevel learning approach for inverse problems can be formulated as statistical learning problem. In order to do so, we view the unknown parameter and the data as jointly distributed random variables $(X,D):\Omega \to \R^{n_x}\times \R^{n_d}$ defined as
\begin{equation*}
D(\omega) = \bK X(\omega) + Z(\omega),\quad \omega\in\Omega\,,
\end{equation*}
on the joint probability space $(\Omega,\scrF,\Pr)$. The random variables $(X,D)$ take the role of $(\xi_{1},\xi_{2})$ in our general stochastic bilevel optimization problem. The upper level problem in \eqref{eq:SBL} can be expressed as 
\begin{equation}\label{eq:linIP_upperlevel}
\min_{\by\in\scrY}\ F(x^\ast(\by,D),X)+r_1(\by),\quad F(x^\ast(\by,D),X)\eqdef \Ex_{\Pr}[\norm{\bx^\ast(\by,D)-X}^2]\, ,
\end{equation}
where for each sample $D(\omega)=d\in\mathbb R^{n_d}$ the vector $\bx^\ast(\by,d)\in\mathbb R^{n_x}$ denotes the solution of the lower level problem \eqref{eq:linIP_lowerlevel} for given parameters $\by$ (to be specified later) and $r_1:\scrY \to \R_+$ is possible additional regularization of the hyperparameter $\by$. Here, we view the training data consisting of independent ground truth realization $X$ and noisy observation $D$. The goal of the bilevel optimization problem \eqref{eq:linIP_upperlevel} is to choose parameters $\by$ such that the reconstruction via the lower level problem \eqref{eq:linIP_lowerlevel} is optimal over the considered (training) data distribution of $(X,D)$ in the mean-square sense. 

\subsubsection{Selecting the regularisation parameter} 
The reconstruction of the unknown parameter by solving \eqref{eq:linIP_lowerlevel} crucially depends on the choice of the regularization parameters $\lambda,\ \nu$ and the smoothing parameter $\nu$. Consequently, we let $\by=(\lambda,\tau,\nu)\in\scrY\equiv\mathbb R_+^3$. 
To estimate the signal 
based on observed data, the empirical risk minimization analogue of the stochastic optimization problem \eqref{eq:SBL} is often considered in the literature. We are given a sample $\{x_{1},\ldots,x_{N}\}\subset\R^{n_x}$ and $\{d_{1},\ldots,d_{N}\}\subset\R^{n_d}$, which are i.i.d realizations of the random element $(X,D)$. This setting can be included in \eqref{eq:SBL} and \eqref{eq:linIP_lowerlevel}-\eqref{eq:linIP_upperlevel} respectively by considering the 
empirical measure
\[
\Pr(A)\eqdef \frac{1}{N}\sum_{i=1}^{N}\delta_{(x_i,d_i)}(A)\qquad\forall A\in\scrB(\R^{n_x}\times \R^{n_d}). 
\]
over the labled data set $\{(x_1,d_1),\dots,(x_N,d_N)\}$. In this case, the upper level objective function reduces to the finite sum problem  
\[
h(\by)=\frac{1}{N}\sum_{i=1}^{N}F(\bx^{\ast}(\by,d_{i}),x_{i}) = \frac{1}{N}\sum_{i=1}^N \|x^\ast(\by,d_i)-x_i\|^2.
\]
This corresponds to a supervised learning problem where the labeled data set 
$\{(x_{1},d_{1}),\ldots,(x_{N},d_{N})\}$ is used to train $\bx\in\scrX$ as a function of the hyperparameter $\by\in\scrY$ and the data point $(x_{i},d_{i})$, so that 
\[
\bx^{\ast}_{i}(\by)\eqdef \bx^{\ast}(\by,d_i)\qquad \forall 
1\leq i\leq N .
\]
The optimization problem \eqref{eq:SBL} reduces then to the finite-sum composite problem 
\begin{equation}\label{eq:P}\tag{P}
\begin{split}
 & \min_{\by}\frac{1}{N}\sum_{i=1}^{N}\|\bx^{\ast}_{i}(\by)-x_{i}\|^2+r_{1}(\by) \\
 & \text{ s.t.: } \bx^{\ast}_{i}(\by)\in\argmin_{\bx\in\scrX}\{g(\bx,\by,d_{i})+r_{2}(\bx)\}\quad 1\leq i\leq N.
 \end{split}
 \end{equation}

\subsubsection{Optimal experimental design}\label{ex:OED}
In inverse problems, the forward model often depends on design parameters such as sensor placements or angle selection. In optimal experimental design (OED) one seeks to enhance the reconstruction by selecting the design parameters. 
Commonly, OED problems are formulated based on a pool of $n$ possible candidates for design. The goal is to find an optimal subset of $k$ observations from all candidates. While OED problems are often formulated in a Bayesian framework, aiming to maximize the expected information gain \cite{Lindley1956,Chaloner1995,Rainforth2024}, in our setting we formulate the OED problem as a stochastic bilevel optimization problem. A related approach has been proposed and studied in \cite{Ruthotto2018} with the motivation to incorporate state constraints.

Suppose that the variable $\by$ captures design parameters for the measurement process. The inner problem models the inverse problem for given design parameters. Following similar notation as the previous example, this could be included in \eqref{eq:SBL} and \eqref{eq:linIP_lowerlevel}-\eqref{eq:linIP_upperlevel} respectively
where the design parameters $\by\in\scrY$ may have a direct influence on the forward operator $\bK_{\by}$. In order to do so, we assume that the pool of design parameters is given by a fixed vector $\theta=(\theta_1,\dots,\theta_n)\in\R^n$. For a subset $J=\{j_1,\dots,j_k\}\subset \{1,\dots,n\}$ of size $|J|=k$ we define the projection $\theta_J:=\pi_J \theta := (\theta_{j_1},\dots,\theta_{j_k})\in\R^k$. The linear forward model is assumed to be depending on a selection of $k$ design parameters such that we may write $\bK_J = \bK(\pi_J\theta)\in\R^{n_d\times n_x}$ for any choice $J\subset \{1,\dots,n\}$. More precisely, we assume that the mapping $J\mapsto \bK_J$ is measurable and given a subset $J$, the inverse problem reads as
\[d = \bK_J \bx + Z\,, \]
where $Z$ may also depend on $J$. For example, one can view the measurements $d=\bK\bx$ as consisting of $n$ atomic measurements, corresponding to (blocks of) rows of $\bK$. In our OED formulation, we then aim to find a set of $k \leq n$ experiments that lead to the best possible reconstruction over a training data set. 

To model the OED, we introduce the set of possible policies $\scrP:=\{p=(p_1,\dots,p_n)\in\R^n: \sum_{i=1}^n p_i = 1,\ p_i\ge0\}$ which describe the probabilities of selecting a particular (block of) rows from $\bK$. The parameter for the upper level problem now consists of this policy, 
i.e., $\by \in \scrP$. In this case, we view the unknown parameter $X$ and the noisy observations $D$ as jointly distributed random variable $(X,D):\Omega\to\R^{n_x}\times \R^{n_d}$ defined by
\begin{equation}\label{eq:data_distr}
 D(\omega) = \bK_{J(\omega)} X(\omega) + Z(\omega),\quad \omega\in\Omega\,,
\end{equation}
where the random variables $X$, $Z:\Omega\to\R^{n_d}$ and $J:\Omega\to \bigtimes\limits_{i=1}^k \{1,\dots,n\} $ are assumed to be independent. The solution map $\bx^*(\by,d)$ now denotes the solution to \eqref{eq:linIP_lowerlevel} with $\bK \equiv \bK_J$ and $d$ drawn according the data-generating process outlined above. Note that the forward operator $\bK_J$ consists of $k$ (blocks of) rows, drawn at random according to the policy $p$.

\subsection{Implementation and validation}\label{sec:implementation}
In both examples, we have implemented Algorithm~\ref{alg:proxderivativefree} with inexact lower level solution. Following our theoretical findings 
we solve the lower level problem up to accuracy $\beta_k = \frac{\beta_0}{\sqrt{k}}$, $\beta_0>0$ and increase the batch size of the random gradient estimator \eqref{eq:Vinexact} by $m_k = \sqrt{k}\cdot m_0$, $m_0\in\mathbb N$. We adopt the step-size policy $\alpha_k = \frac{\alpha_0}{\sqrt{k}}$, $\alpha_0>0$. The smoothing parameter is fixed at level $\eta=0.01$ for the first example and decreased by the schedule $\eta_k = 1/\sqrt{k}$ in the second example.  In order to numerically illustrate the convergence of the generated trajectory, we plot the summation over the random operator $\tilde{\scrG}_{\eta,k}^{\beta_k}$ scaled by the step-size policy, i.e.~we demonstrate that
\[ \Delta_k := \sum_{s=1}^k \alpha_s \tilde{\scrG}_{\eta,s}^{\beta_s}\]
remains bounded. To verify the generalization properties of the method, we generate a validation data set independent of the data set applied in the application of Algorithm~\ref{alg:proxderivativefree}, defined as i.i.d.~sample $(x^{i,{\mathrm{val}}},d^{i,{\mathrm{val}}})_{i=1}^{m_{\mathrm{val}}}$, $m_{\mathrm{val}}\in\mathbb N$, of $(X,D)$ defined in \eqref{eq:data_distr}. As a result we plot the normalized empirical errors in the upper level 
\begin{equation}\label{eq:validation_error}
e_i(\by) := \frac{\|\bx^\ast(\by,d^{i,{\mathrm{val}}})- x^{i,{\mathrm{val}}}\|}{\|x^{i,{\mathrm{val}}}\|}\quad \forall i\in\{1,\dots,m_{\mathrm{val}}\}.
\end{equation}
The lower level solution $\bx^\ast(\by,d^{i,{\mathrm{val}}})$ is obtained via the gradient descent method with tolerance $\beta = 10^{-7}$. In this visualization, we compare the generalization error for different choices of $\by$ with the resulting learned parameters by Algorithm~\ref{alg:proxderivativefree}.


\subsection{Numerical results}\label{sec:numericalresults}

\subsubsection{One-dimensional signal denoising}\label{ssec:denoising}

In the first experiment, we consider a simple one-dimensional image denoising problem, inspired by \cite{Ehrhardt:2021ab}. The goal is to reconstruct a noisy one dimensional piece-wise constant signal. We represent the signal as random vector $X =(X({t_1}),\dots, X({t_{n_x}}))^\top\in \R^{n_x}$, with $n_x = 256$ sample points, corrupted by Gaussian white noise
\[D({t}) = X(t) + \sigma\,Z_t\,,\quad t\in\{t_1,\dots,t_{n_x}\} \]
where $(Z_t)_{t\in\{t_1,\dots,t_{n_x}\}}$ are independent and identically distributed with $Z_1\sim\Normal(0,1)$, $\sigma=\sqrt{0.001}$. In our experiments, we set for $t_i = \frac{i}{n_x}$, $i=1,\dots,n_x$, 
\[ X({t_i},\omega)= \mathbf{1}_{[C(\omega),R(\omega)]}(t_i)\,,\]
where $C,R$ are two independent uniformly distributed random variables with $C\sim\Uniform([\frac18,\frac14])$ and $R\sim\Uniform([\frac38,\frac78])$. 

In our implementation, we have introduced the parametrization $\lambda(y^{(1)}) = 10^{y^{(1)}}$, $\tau(y^{(2)}) = 10^{y^{(2)}}$ and $\nu(y^{(3)}) = 10^{y^{(3)}}$ with additional constrain $\by = (y^{(1)},y^{(2)},y^{(3)}) \in [-7,7]^3\subset\mathbb R^3$. The resulting proximal operator becomes a projection operator into $[-7,7]^3$.  Moreover, we apply a second order regularization matrix $\bL^2 = 0.01^2\Delta^{-1}$ for the Tikhonov regularization in the lower level problem \eqref{eq:linIP_lowerlevel}, where $\Delta$ denotes the (discretized) Laplace operator. In addition, we include a regularized upper level defined as
\begin{equation*}
\min_{\by\in[-7,7]^3}\ \Ex_{\Pr_{0}}[\|\bx^\ast(\by,D)-X\|^2] + 10^{-6} \left(\frac{\|\bK^\top \bK\| + \frac{\tau(y^{(2)}) \partial}{\nu(y^{(3)})} + \lambda(y^{(1)}) \|\bL^2\|}{\lambda(y^{(1)}) e_{\min}(\bL^2)}\right)^2\, ,
\end{equation*}
to avoid too large condition numbers of the lower level problem. Finally, we set $\alpha_0=1$, $\beta_0=0.01$ and $m_0=1$, and terminate Algorithm~\ref{alg:proxderivativefree} after $N=700$ iterations.

In Figure~\ref{fig:1} (a)-(c) we plot the resulting regularization parameters generated by Algorithm~\ref{alg:proxderivativefree}, where we observe that all three parameters converge. This result can also be observed from Figure~\ref{fig:1} (d), where we demonstrate that the summation over the random operators $\tilde{\scrG}_{\eta,k}^{\beta_k}$ remains bounded. The resulting reconstruction of the signal using the learned regularization parameters for solving the lower level problem \eqref{eq:linIP_lowerlevel} is plotted in Figure~\ref{fig:2} (b).  As comparison, in Figure~\ref{fig:2} (c)-(f), we plot the reconstructions of the signal using different regularization parameters chosen by hand. In all four cases, we have chosen a smoothing parameter $\nu = 10^{-3}$. The comparison of the reconstruction already suggest that our learned regularization parameter using Algorithm~\ref{alg:proxderivativefree} outperforms the fixed regularization parameters. This suggestion is further demonstrated in Figure~\ref{fig:3} where we compare the generalization error \eqref{eq:validation_error} over validation data set independent of the training data set.

\begin{figure}[htb!]
\includegraphics[width=1\textwidth]{./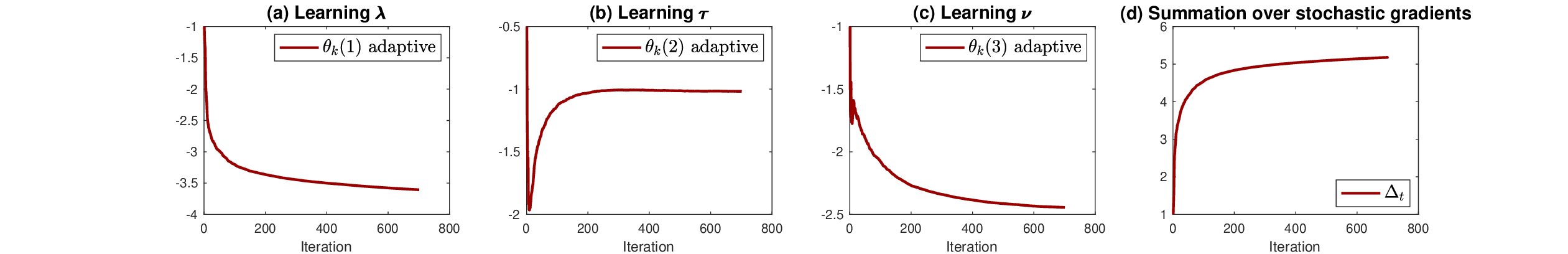}
\caption{(a)-(c) Learned regularization parameters $(y_k^{(1)},y_k^{(2)},y_k^{(3)})$ resulting from Algorithm~\ref{alg:proxderivativefree} and (d) summation over the random operators $\tilde{\scrG}_{\eta,t}^{\beta_t}$.} \label{fig:1}
\end{figure}

\begin{figure}[htb!]
\includegraphics[width=1\textwidth]{./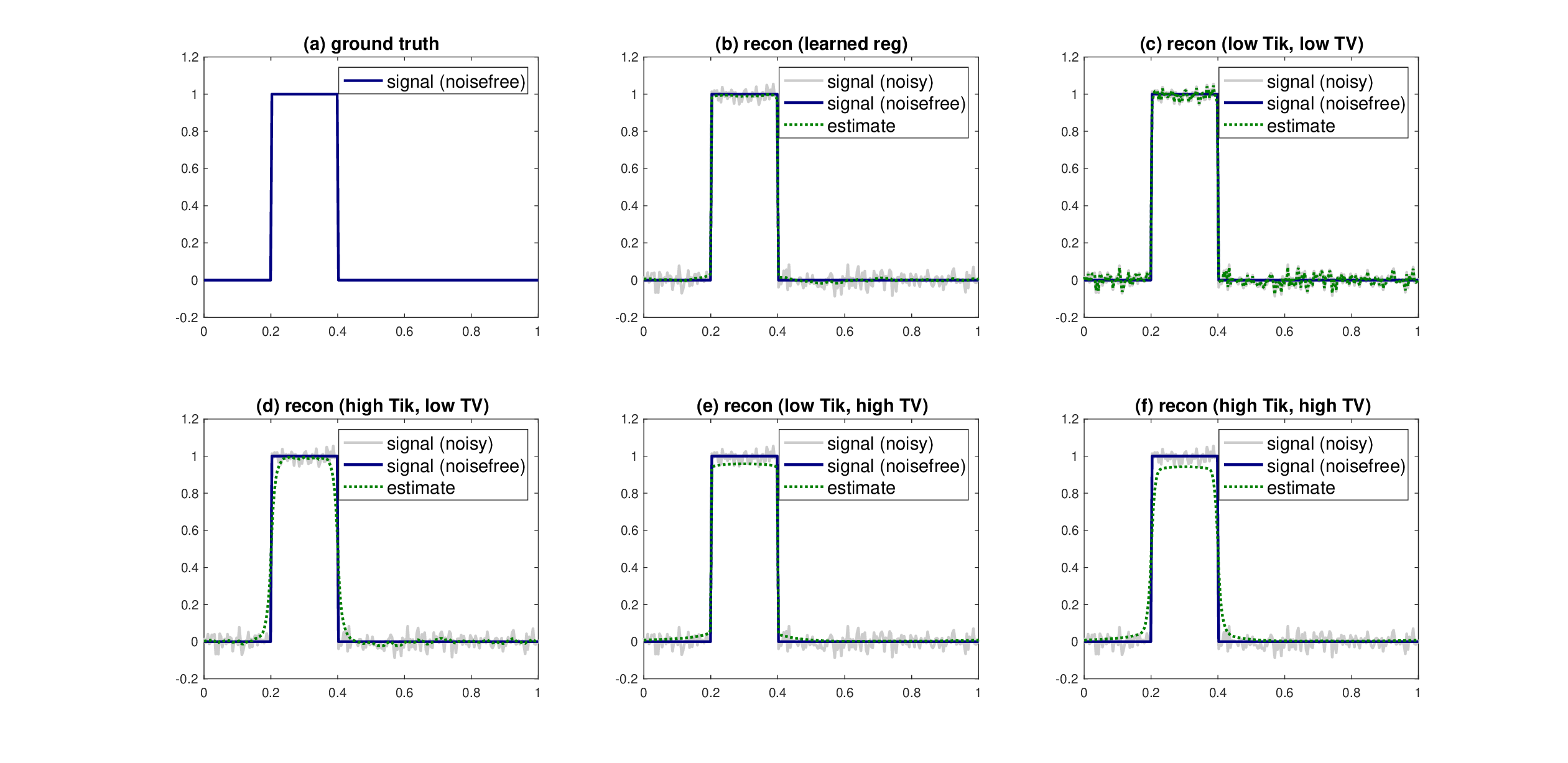}
\caption{(a) Ground truth signal and (b) reconstruction of the signal using  the learned regularization parameters $(\lambda,\tau,\nu)(y_N)$ after $N=300$ iterations. As comparison we show the reconstruction (c) using low Tikhonov regularization with $\lambda = 10^{-3}$ and low TV regularization with $\tau = 10^{-3}$, (d) using high Tikhonov regularization with $\lambda = 10^{-1}$ and low TV regularization with $\tau = 10^{-3}$, (e) using low Tikhonov regularization with $\lambda = 10^{-3}$ and high TV regularization with $\tau = 1$, and (f) using high Tikhonov regularization with $\lambda = 10^{-1}$ and high TV regularization with $\tau = 1$. In (c)-(f) we have fixed the smoothing parameter $\nu=10^{-3}$.} \label{fig:2}
\end{figure}

\begin{figure}[htb!]
\centering \includegraphics[width=0.4\textwidth]{./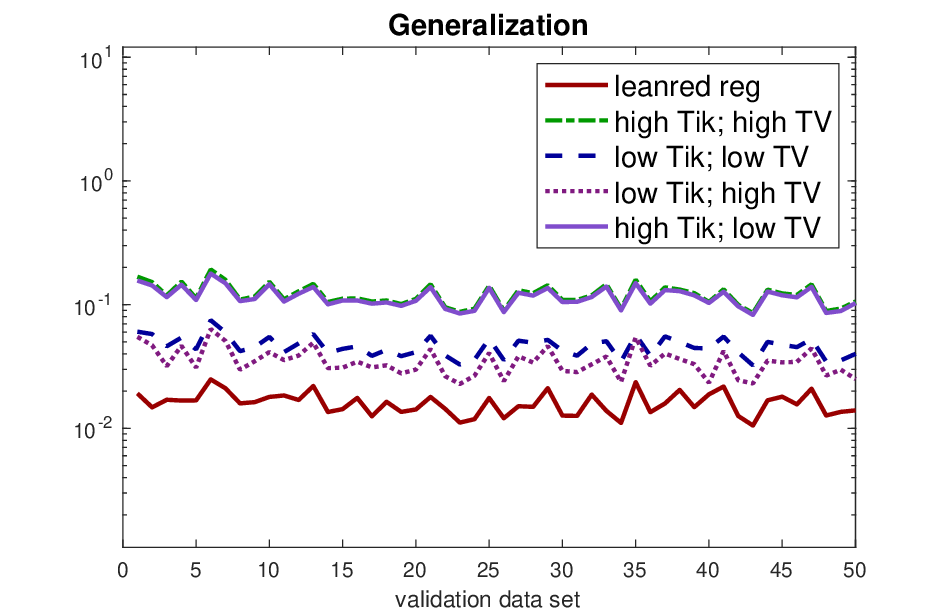}
\caption{Pointwise generalization error in the upper level $e_i(\lambda,\tau,\nu)$ over the validation data set $(x_i^{\mathrm{val}},d_i^{\mathrm{val}})_{i=1}^{m_{\mathrm{val}}}$, $m_{\mathrm{val}}=50$.  We plot the errors for the different choices of regularization parameters from Figure~\ref{fig:2}.}\label{fig:3}
\end{figure}

\subsubsection{Image reconstruction based on  the radon transform}\label{ex:radon}
In X-ray tomography, the forward operator $\bK_\theta$ is a discretization of the Radon transform \cite{Hansen2021}, where data are collected at various angles $\theta \in [0,\pi)$. The unknown $\bx$ represents a 2D image, and the measurements $d(\theta) = \bK_\theta \bx$ for one angle represents the line integrals of that image along straight lines at angle $\theta$. Collecting a large number of angles $\theta \in [0,\pi)$ leads to a well-posed inverse problem and generally yields a good reconstruction. For practical applications it is of interest to reduce the number of angles, dictating the use of additional regularization to fill in the missing information.

In the following experiment, we assume that we are allowed to pick $k=6$ angles out of a pool of $n = 64$ possible angles $\theta_i = \frac{(i-1)\pi}{n}$. The goal is to reconstruct images of size $64\times 64$ px given the noisy measurements $(d(\theta_{j_1}),\dots,d(\theta_{j_6}))\in\R^6$ constructed by $d(\theta_{j_i}) = \bK_{\theta_{j_i}}\bx + Z_{i}$, $i=1,\dots,6$ where $(Z_i)_{i=1,\dots,6}$ are independent and identical distributed according to $\Normal(0,0.01^2)$. Our set of images consist of randomly generated triangles of varying size, rotation in the space and varying gray levels ranging from $0.5$ to $1$. The angles and direction of the triangles are kept fixed. In Figure~\ref{fig:ex2_samples}, we show i.i.d.~realization of $16$ different images. 
\begin{figure}[htb!]
\centering \includegraphics[width=0.6\textwidth]{./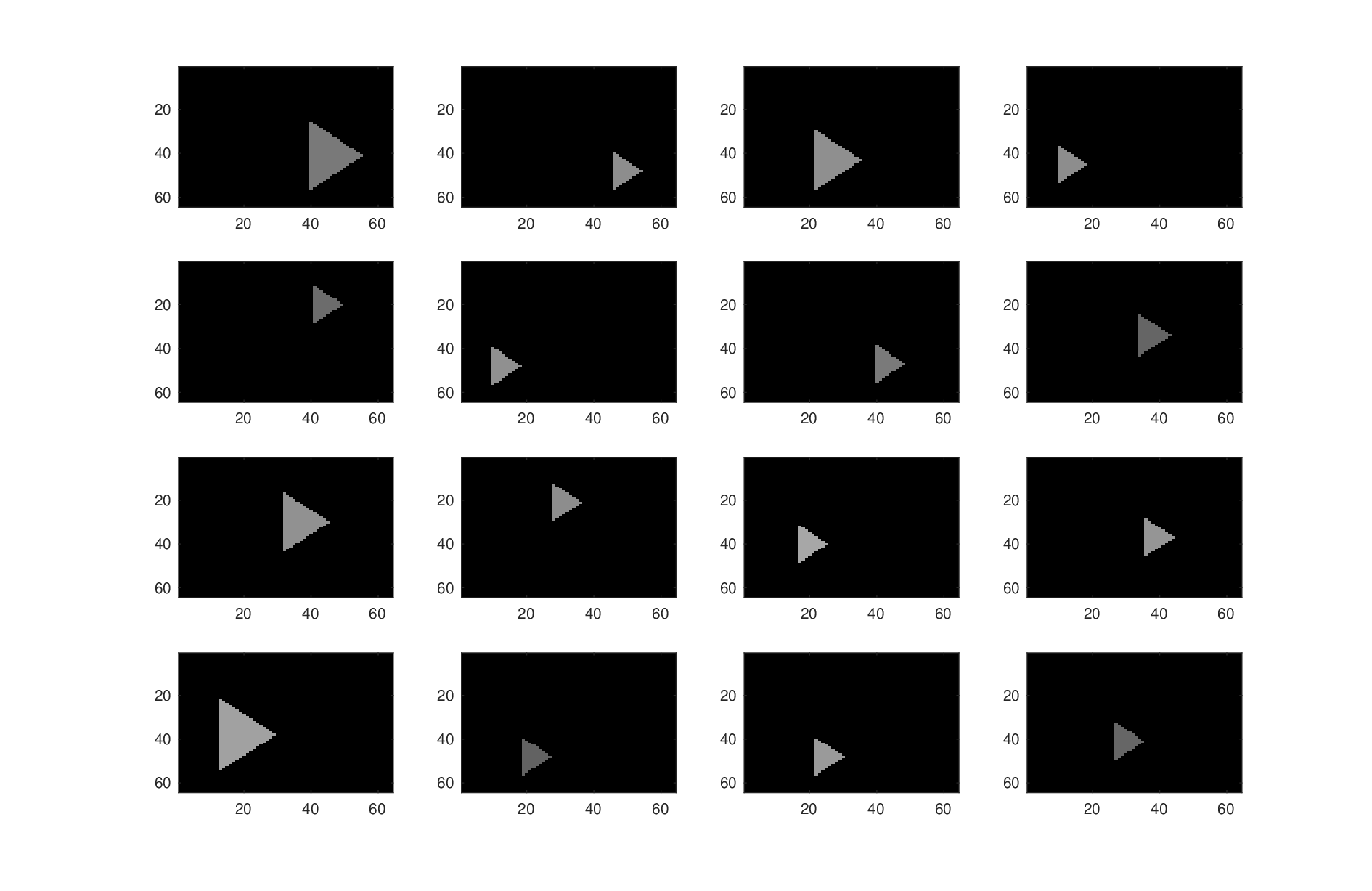}
\caption{Realizations of the random triangles in Example~\ref{ex:radon}}\label{fig:ex2_samples}
\end{figure}

In order to enhance the reconstruction accuracy we have implemented the OED problem of choosing the best possible policy over the set of all possible angles \cite{10572344}. The regularization parameters are again parametrized as $\lambda(y^{(1)})=10^{y^{(1)}}$, $\tau(y^{(2)}) = 10^{y^{(2)}}$ and $\nu(y^{(3)}) = 10^{y^{(3)}}$ with the additional constraint $(y^{(1)},y^{(2)},y^{(3)})\in[-7,7]^3$. Moreover, we set the regularization matrix $\bL =\Id$ for the Tikhonov regularization in the lower level problem \eqref{eq:linIP_lowerlevel}. In addition, we incorporate state constrain to the lower level solution forcing the solutions to remain non-negative which is implemented using a projected gradient method. 
For the parametrization of the policy we've used a soft-max parametrization such that the probability distributions are defined as 
\[ p_i = \frac{\exp(\theta_i)}{\sum_{j=1}^n\exp(\theta_j)},\quad i=1,\dots,n\,.\]
To compromise the notation, we define $(y^{(4)},\dots y^{(n+3)}):=(\theta_1,\dots,\theta_n)$ such that our upper level problem \eqref{eq:linIP_upperlevel} is a minimization problem over a space $\scrY$ of dimension $n+3 = 67$. Given a policy $p=(p_1,\dots,p_n)\in\scrP$, the data $D$ is generated by
\[D(\omega) = \bK_{J(\omega)} X(\omega) + Z(\omega)\,, \]
where $J:\Omega\to \bigtimes\limits_{i=1}^k \{1,\dots,n\}$ is a random variable generating $k$ samples of the policy $p$ without replacement, $\bK_J=(\bK_{\theta_{j_1}},\dots, \bK_{\theta_{j_6}})$ denotes the forward map given $J=(j_1,\dots,j_6)\in\{1,\dots,n\}^6$ realized angles and $Z=(Z_1,\dots,Z_6)$ denotes the i.i.d.~measurement noise with $Z_i\sim\Normal(0,0.01^2)$. As discussed above, we assume that all random variables $J$, $X^\dagger$ and $Z$ are independent.

In our numerical implementation, we have chosen the uniform policy $(p_1,\dots,p_n) = (1/n,\dots,1/n)$ as initial condition. The same policy is used as comparison in our validation over the validation data set. Algorithm~\ref{alg:proxderivativefree} with inexact lower level solution is applied with $\alpha_0 = 0.2$, $\beta_0=0.1$ and $m_0=1$, and terminated after $N=2000$ iterations. The generalization performance is illustrated in Figure~\ref{fig:ex2_generalization}, where we have applied various configurations of regularization parameters together with the uniform policy. Among fixed choices of regularization parameters, we have also implemented the bilevel learning approach for selecting the regularization parameters $(\lambda,\tau,\nu)$ with a fixed uniform policy. Overall, we observe a significant improvement by applying our learned policy. The resulting reconstructions for the different choices of regularization parameters and policies are shown in Figure~\ref{fig:ex2_recon_collection}. These reconstructions further demonstrate the significant improvement through the proposed OED approach based on the stochastic bilevel optimization problem.

\begin{figure}[htb!]
\centering \includegraphics[width=1\textwidth]{./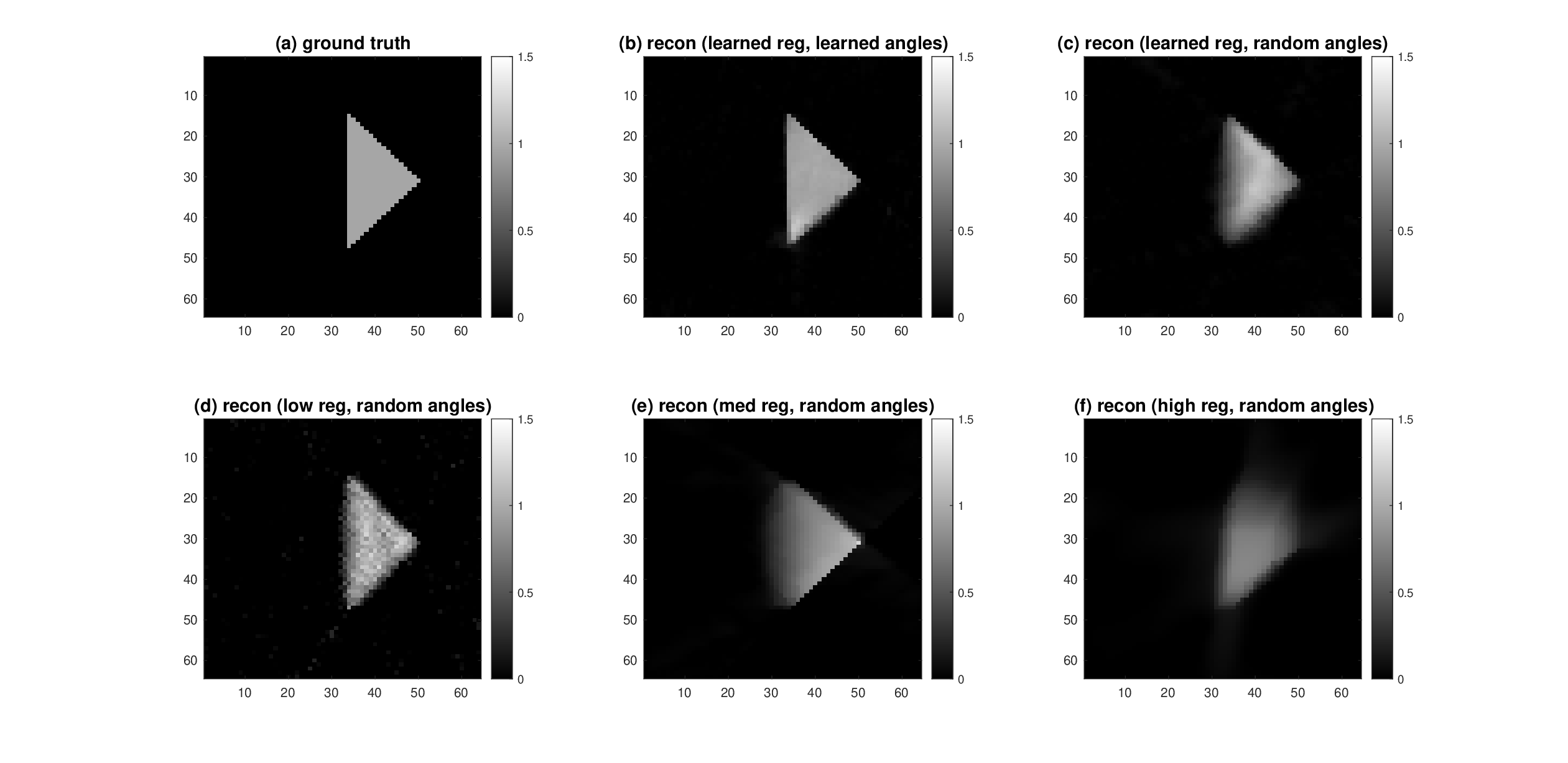}
\caption{(a) Ground truth image and (b) reconstruction of the image using the learned regularization parameters $(\lambda,\tau,\nu)(\by_N)$ and the learned policy $p(\by_N)$ after $N=2000$ iterations. As comparison we show the reconstruction of the image (c) using the learned regularization parameters $(\lambda,\tau,\nu)(\by_N)$ after $N=2000$ iterations and a fixed uniform policy, (d) using low regularization with $\lambda = 10^{-9}$, $\tau = 10^{-9}$, $\nu = 10^{-2}$ and uniform policy, (e) using medium regularization with $\lambda = 10^{-3}$, $\tau = 10^{-3}$, $\nu = 10^{-2}$ and uniform policy, (f) using high regularization with $\lambda = 10^{-2}$, $\tau = 10^{-2}$, $\nu = 10^{-2}$ and uniform policy. }\label{fig:ex2_recon_collection}
\end{figure}

\begin{figure}[htb!]
\centering \includegraphics[width=0.4\textwidth]{./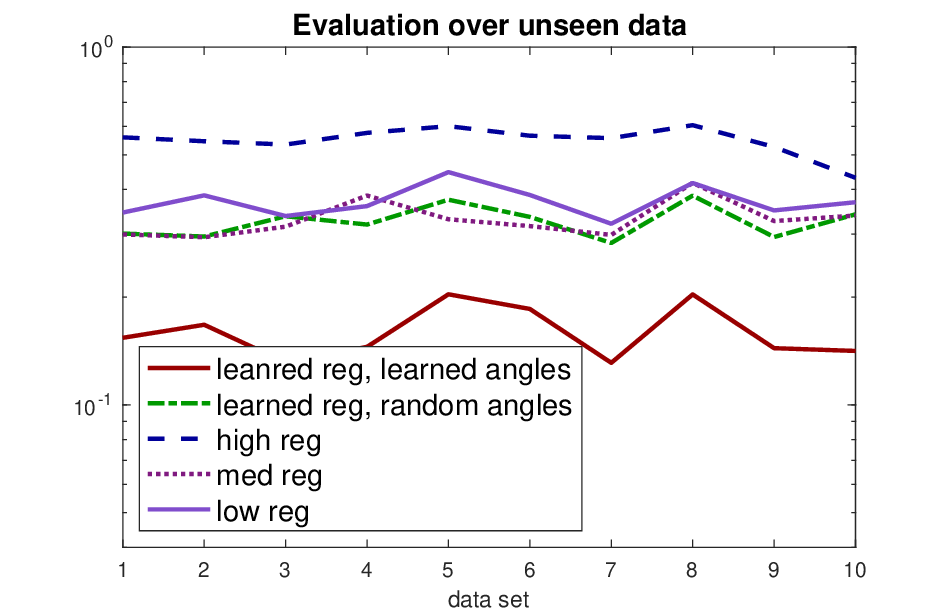}
\caption{Pointwise generalization error in the upper level $e_i(p,\lambda,\tau,\nu)$ over the validation data set $(x_i^{\mathrm{val}},d_i^{\mathrm{val}})_{i=1}^{m_{\mathrm{val}}}$, $m_{\mathrm{val}}=10$.  We plot the errors for the different choices of regularization parameters from Figure~\ref{fig:ex2_recon_collection}.}\label{fig:ex2_generalization}
\end{figure}

%
%
%
%

 \section{Conclusion}
 \label{sec:conclusion}
%
In this paper we've studied a zeroth-order gradient method for a particular class of stochastic bilevel programs which arise naturally in data-driven learning of inverse problems. Our complexity estimates adapt to smoothing and inexact solutions of the lower level problem. Our theoretical and numerical results display the favourable properties of our scheme. In future work, we plan to continue this line of research along the following directions: 
\begin{itemize}
\item Higher-order numerical methods: The merit function employed in this paper is a stationary point. In non-convex optimization, an important question is whether our method is able to avoid saddle-points. For this, we plan to develop stochastic Newton methods, employing derivative-free gradient estimation strategies, as done in this paper.
\item Weakening the Lipschitz continuity assumptions of the hyperobjective. Interesting recent results in this direction are reported in \cite{Lei:2024aa}. 
\item Construction of the random estimator: In this paper we adopt a Monte-Carlo approach to estimate the directional deriviative using iid Gaussian directions. It would be interesting to include more structure in this sampling approach. Quasi- or Multi-level Monte Carlo approaches would be interesting new stochastic simulation approaches to reduce the computational costs \cite{Giles_2015}. 
\end{itemize} 

\paragraph*{Acknowledgments}
 The first author thanks the FMJH Program Gaspard Monge for optimization and operations research and their interactions with data science for financial support.
 \begin{appendix}
 \section{Properties of the Gaussian smoothing}
 \label{app:Smoothing}
%

%
Let $\scrE$ be a finite-dimensional real vector space, and define $M_{p}\eqdef \Ex[\norm{U}^{p}]$.
 \begin{lemma}[\cite{Nesterov:2017wm}, Lemma 1]
 \label{lem:Gaussbound}
 We have $M_{0}=1, M_{2}=n$ and for $p\in[0,2]$, 
 \begin{equation}\label{eq:Gaussboundlow}
 M_{p}\leq n^{p/2}. 
 \end{equation}
 If $p\geq 2$, then 
 \begin{equation}\label{eq:Gaussboundhigh}
 n^{p/2}\leq M_{p}\leq (p+n)^{p/2} 
 \end{equation}
 \end{lemma}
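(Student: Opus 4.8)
The statement to prove is Lemma~\ref{lem:Gaussbound} (bounds on the moments $M_p = \Ex[\|U\|^p]$ of a standard Gaussian vector on an $n$-dimensional space). Here is my proof plan.

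\textbf{Setup and reduction.} First I would observe that since $U \sim \Normal(0,\Id_{\scrE})$ means $\Pr_1\circ U^{-1}$ has density $\pi$ with respect to $\Leb_{\scrY}$, the quantity $\|U\|^2 = \inner{BU,U}$ is distributed as a sum of $n$ independent squared standard normals, i.e. $\|U\|^2 \sim \chi^2_n$. (Concretely: diagonalize $B$, change variables so the density becomes the isotropic standard Gaussian, and $\|U\|^2$ becomes $\sum_{i=1}^n Z_i^2$ with $Z_i$ i.i.d.\ $\Normal(0,1)$.) Hence $M_p = \Ex[(\chi^2_n)^{p/2}]$, which can be written in closed form via the Gamma function: $M_p = 2^{p/2}\,\Gamma\!\left(\frac{n+p}{2}\right)\big/\Gamma\!\left(\frac{n}{2}\right)$. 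The special values $M_0 = 1$ and $M_2 = \Ex[\|U\|^2] = n$ (trace of the identity, or $\Gamma(\frac{n}{2}+1)/\Gamma(\frac{n}{2}) = \frac{n}{2}$ times $2$) follow immediately.

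\textbf{The case $p \in [0,2]$.} For the upper bound $M_p \le n^{p/2}$, the cleanest route is Jensen's inequality: the map $t \mapsto t^{p/2}$ is concave on $[0,\infty)$ when $p/2 \in [0,1]$, so $M_p = \Ex[(\|U\|^2)^{p/2}] \le (\Ex[\|U\|^2])^{p/2} = n^{p/2}$. This handles \eqref{eq:Gaussboundlow} in one line.

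\textbf{The case $p \ge 2$.} The lower bound $n^{p/2} \le M_p$ is again Jensen, now in the other direction: $t \mapsto t^{p/2}$ is convex for $p/2 \ge 1$, so $M_p = \Ex[(\|U\|^2)^{p/2}] \ge (\Ex[\|U\|^2])^{p/2} = n^{p/2}$. The upper bound $M_p \le (p+n)^{p/2}$ is the only part requiring a genuine estimate. I would prove it by induction on $p$ restricted to even integers first, or more slickly: write $M_p = \Ex[\|U\|^{p-2}\cdot\|U\|^2]$ and use integration by parts / the Gaussian Stein identity $\Ex[\|U\|^{p-2}\|U\|^2] = \Ex[\Div(\|U\|^{p-2}\,U)]$ — but this is essentially the route in Nesterov's original Lemma~1. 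A self-contained alternative: use the Gamma-ratio formula above together with the elementary inequality $\Gamma(x+s) \le \Gamma(x)\,(x+s)^s$ for $x>0$, $s\ge 0$ (which follows from log-convexity of $\Gamma$, or from $\Gamma(x+s)/\Gamma(x)$ being bounded by $(x+s)^s$ via Wendel's inequality), applied with $x = n/2$ and $s = p/2$: this gives $M_p = 2^{p/2}\Gamma(\frac{n+p}{2})/\Gamma(\frac{n}{2}) \le 2^{p/2}(\frac{n+p}{2})^{p/2} = (n+p)^{p/2}$.

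\textbf{Main obstacle.} The only nontrivial step is the upper bound in \eqref{eq:Gaussboundhigh}; everything else is Jensen or a direct computation. The risk is in justifying the Gamma-function inequality $\Gamma(x+s)\le \Gamma(x)(x+s)^s$ cleanly without importing heavy special-function machinery. If I want to keep the exposition elementary and aligned with \cite{Nesterov:2017wm}, I would instead prove $M_p \le (p+n)^{p/2}$ by induction: verify $p=2$ (equality $M_2 = n \le (n+2)$), and for the inductive step use Hölder's inequality $M_p \le M_{p+2}^{p/(p+2)}$ combined with a recursion for $M_{p+2}$ in terms of $M_p$ obtained from Gaussian integration by parts ($\Ex[\|U\|^{p}] $ relates to $\Ex[\|U\|^{p+2}]$ via $\Ex[\Div(\,\cdot\,)]$), yielding the claimed polynomial growth. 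Since the paper cites this as a known lemma, a short proof referencing \cite[Lemma 1]{Nesterov:2017wm} for the delicate bound, with the Jensen arguments spelled out for completeness, is the pragmatic choice.
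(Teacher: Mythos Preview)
The paper does not supply its own proof of this lemma; it merely states the result with a citation to \cite[Lemma~1]{Nesterov:2017wm}. So there is nothing to compare against except the cited source, and your proposal is essentially a reconstruction of that argument together with a clean alternative for the hardest part.

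Your proof is correct. The Jensen steps for \eqref{eq:Gaussboundlow} and for the lower bound in \eqref{eq:Gaussboundhigh} are exactly right, and the special values $M_0=1$, $M_2=n$ are immediate. For the upper bound $M_p\le (p+n)^{p/2}$ with $p\ge 2$, your Gamma-ratio route is valid: the inequality $\Gamma(x+s)\le \Gamma(x)(x+s)^s$ for $x>0$, $s\ge 0$ does follow from log-convexity via $\log\Gamma(x+s)-\log\Gamma(x)=\int_0^s\psi(x+t)\,\mathrm{d}t\le s\,\psi(x+s)\le s\log(x+s)$, using that the digamma function $\psi$ is increasing and satisfies $\psi(y)<\log y$. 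Applied with $x=n/2$, $s=p/2$ this gives precisely $(n+p)^{p/2}$. This is arguably more self-contained than the Gaussian integration-by-parts recursion used in \cite{Nesterov:2017wm}, at the cost of invoking a standard digamma bound. Since the paper treats the lemma as a black-box citation anyway, either route is acceptable; your pragmatic closing remark (spell out Jensen, cite Nesterov for the delicate bound) matches exactly what the paper in effect does.
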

 
\begin{proof}[Proof of Lemma \ref{lem:Liph}]
For all $\by_{1},\by_{2}\in\scrY$, we have
\begin{align*}
\abs{h_{\eta}(\by_{1})-h_{\eta}(\by_{2})}&=\abs{\Ex_{\Pr_{1}}[h(\by_{1}+\eta U)]-\Ex_{\Pr_{1}}[h(\by_{2}+\eta U)]}\\
&=\abs{\Ex_{\Pr_{1}}[h(\by_{1}+\eta U)-h(\by_{2}+\eta U)]}\\
&\leq \Ex_{\Pr_{1}}\left[\abs{h(\by_{1}+\eta U)-h(\by_{2}+\eta U)}\right]\\
&\leq \lip_{0}(h)\norm{\by_{1}-\by_{2}}.
\end{align*}
\end{proof}

\begin{proof}[Proof of Lemma \ref{lem:boundfunctions}]
For any $\by\in\scrY$ we have
\begin{align*}
\abs{h_{\eta}(\by)-h(\by)}\leq \Ex_{\Pr_{1}}\left[\abs{h(\by+\eta U)-h(\by)}\right]\leq \eta\lip_{0}(h)\Ex_{\Pr_{1}}\left[\norm{U}\right]=\eta\lip_{0}(h)\sqrt{n}. 
\end{align*}
\end{proof}

\begin{proof}[Proof of Lemma \ref{lem:boundGradSmoothed}]
Using the formula \eqref{eq:smoothing}, for any $\by\in\scrY$, we can directly differentiate under the integral to obtain 
\begin{align*}
\nabla h_{\eta}(\by)&=\frac{1}{\Upsilon\eta^{n}}\int_{\scrY}h(\bz)\exp\left(-\frac{1}{2\eta^{2}}\norm{\bz-\by}^{2}\right)\frac{B(\bz-\by)}{\eta^{2}}\dif\bz\\
&=\frac{1}{\Upsilon}\int_{\scrY}\frac{1}{\eta}h(\by+\eta\bu)\exp\left(-\frac{1}{2}\norm{\bu}^{2}\right)B\bu\dif\bu\\
&=\Ex_{\Pr_{1}}\left[\frac{h(\by+\eta U)-h(\by)}{\eta}BU\right]\\
&=\Ex_{\Pr_{1}}\left[\frac{h(\by+\eta U)}{\eta}BU\right].
\end{align*}
Now let $\by_{1},\by_{2}\in\scrY$ so that 
\begin{align*}
\norm{\nabla h_{\eta}(\by_{1})-\nabla h_{\eta}(\by_{2})}_{\ast}&\leq \Ex_{\Pr_{1}}\left[\abs{\frac{h(\by_{1}+\eta U)-h(\by_{2}+\eta U)}{\eta}}\norm{BU}_{\ast}\right]\\
&\leq \lip_{0}(h)\frac{\norm{\by_{1}-\by_{2}}}{\eta}\Ex_{\Pr_{1}}\left[\norm{U}\right]\\
&\leq\lip_{0}(h)\frac{\norm{\by_{1}-\by_{2}}}{\eta}\sqrt{n}
\end{align*}
where the last inequality uses \cite[][Lemma 1]{Nesterov:2017wm}. To obtain the bound on the gradient norm, we continue from the first relation, showing that 
\begin{align*}
\norm{\nabla h_{\eta}(\by)}^{2}_{\ast}&\leq \Ex_{\Pr_{1}}\left[\abs{\frac{h(\by+\eta U)-h(\by)}{\eta}}^{2}\norm{BU}^{2}_{\ast}\right]\\
&\leq \lip_{0}(h)^{2}\Ex_{\Pr_{1}}\left[\norm{U}^{2}\cdot\norm{BU}^{2}_{\ast}\right]\\
&=\lip_{0}(h)^{2}\Ex_{\Pr_{1}}\left[\norm{U}^{4}\right]\leq \lip_{0}(h)^{2}(4+n)^{2}.
\end{align*}
The last equality uses again \cite[][Lemma 1]{Nesterov:2017wm}.
\end{proof}
 \section{Technical Proofs}
 \label{app:1}
%

\subsection{Proof of Lemma \ref{lem:errorbound}}
Given $\by\in\scrY$, we use the law of iterated expectations to compute
\begin{align*}
\Ex_{\Pr}\left[\hat{V}_{\eta,m}(\by)\right]&=\Ex_{\Pr}\left[\frac{1}{m}\sum_{i=1}^{m}\hat{\nabla}_{(U^{i},\eta)}H(\by,\xi^{i})\right]\\
&=\Ex_{\Pr}\left[\frac{1}{m}\sum_{i=1}^{m}\Ex_{\Pr}\left(\hat{\nabla}_{(U^{i},\eta)}H(\by,\xi^{i})\vert \sigma(U^{i})\right)\right]\\
&=\frac{1}{m}\sum_{i=1}^{m}\Ex_{\Pr}\left[\frac{h(\by+\eta U^{i})-h(\by)}{\eta}BU^{i}\right]\\
&=\nabla h_{\eta}(\by)
\end{align*}
where the last equality uses eq. \eqref{eq:GradImplicit}. For the second bound, observe that 
\begin{align*}
\Ex_{\Pr}\left[\norm{\hat{V}_{\eta,m}(\by)}^{2}_{\ast}\right]&=\Ex_{\Pr}\left[\norm{\frac{1}{m}\sum_{i=1}^{m}\left(\hat{\nabla}_{(U^{i},\eta)}H(\by,\xi^{i})-\nabla h_{\eta}(\by)\right)+\nabla h_{\eta}(\by)}^{2}_{\ast}\right]\\
&=\frac{1}{m^{2}}\Ex_{\Pr}\left[\norm{\sum_{i=1}^{m}\left(\hat{\nabla}_{(U^{i},\eta)}H(\by,\xi^{i})-\nabla h_{\eta}(\by)\right)}^{2}_{\ast}\right]+\norm{\nabla h_{\eta}(\by)}^{2}_{\ast}.
\end{align*}
Define the centered random variable $X_{i}\eqdef \hat{\nabla}_{(U^{i},\eta)}H(\by,\xi^{i})-\nabla h_{\eta}(\by)$ for $1\leq i\leq m$, to obtain an i.i.d collection of zero-mean random variables in $\scrY^{\ast}$. Therefore, we can continue from the last line of the previous display by noting that 
\begin{align*}
\Ex_{\Pr}\left[\norm{\sum_{i=1}^{m}X_{i}}^{2}_{\ast}\right]&=\Ex_{\Pr}\left[\inner{B^{-1}\sum_{i=1}^{m}X_{i},\sum_{i=1}^{m}X_{i}}\right]\\
&=\sum_{i,j}\Ex_{\Pr}\left[\inner{B^{-1}X_{i},X_{j}}\right]=\sum_{i=1}^{m}\Ex_{\Pr}\left[\inner{B^{-1}X_{i},X_{i}}\right]\\
&=\sum_{i=1}^{m}\Ex_{\Pr}[\norm{X_{i}}^{2}_{\ast}].
\end{align*}
Since $\Ex_{\Pr}[\norm{X_{i}}^{2}_{\ast}]=\Ex_{\Pr}\left[\norm{\hat{\nabla}_{(U,\eta)}H(\by,\xi)}^{2}_{\ast}\right]-\norm{\nabla h_{\eta}(\by)}^{2}_{\ast}$, it follows 
\[
\Ex_{\Pr}\left[\norm{\hat{V}_{\eta,m}(\by)}^{2}_{\ast}\right]\leq\frac{1}{m} \Ex_{\Pr}\left[\norm{\hat{\nabla}_{(U,\eta)}H(\by,\xi)}^{2}_{\ast}\right]+(1-\frac1m)\|\nabla h_\eta(\by)\|_\ast^2.
\]
\begin{align*}
\Ex_{\Pr}\left[\norm{\hat{\nabla}_{(U,\eta)}H(\by,\xi)}^{2}_{\ast}\right]&=\Ex_{\Pr}\left[\norm{\frac{H(\by+\eta U,\xi)-H(\by,\xi)}{\eta}BU}^{2}_{\ast}\right]\\
&=\Ex_{\Pr}\left[\abs{\frac{H(\by+\eta U,\xi)-H(\by,\xi)}{\eta}}^{2}\cdot\norm{BU}^{2}_{\ast}\right]\\
&\leq \Ex_{\Pr}\left[\lip_{0}(H(\cdot,\xi))^{2}\norm{U}^{4}\right]\stackrel{\textnormal{Lemma \ref{lem:Gaussbound}}}{\leq} \abs{\lip_{0}(H(\cdot,\xi))}_{2}^{2}(4+n)^{2} .
\end{align*} 

\subsection{Proof of Lemma \ref{lem:Psiprogress}}
\label{proof:Lemma5.1}
The optimality condition for the iterate $y_{k+1}$ gives 
$$
B\left(\frac{y_{k}-y_{k+1}}{\alpha_{k}}\right)\in \hat{V}_{k+1}+\partial r_{1}(y_{k+1}).
$$
This means that there exists $\rho_{k+1}\in\partial r_{1}(y_{k+1})$ satisfying 
$$
\hat{V}_{k+1}+\rho_{k+1}=B\left(\frac{y_{k}-y_{k+1}}{\alpha_{k}}\right).
$$
Since $r_{1}(\cdot)$ is convex, the convex subgradient inequality gives for all $u\in\scrY$,
\begin{align}
r_{1}(u)&\geq r_{1}(y_{k+1})-\inner{\hat{V}_{k+1}-B\left(\frac{y_{k}-y_{k+1}}{\alpha_{k}}\right),u-y_{k+1}} \notag \\
&=r_{1}(y_{k+1})-\inner{\hat{V}_{k+1},u-y_{k+1}}+\frac{1}{\alpha_{k}}\inner{B(y_{k+1}-y_{k}),y_{k+1}-u} \label{eq:ineq_r}
\end{align}
Set $u=y_{k}$ to obtain 
\begin{align*}
r_{1}(y_{k})&\geq r_{1}(y_{k+1})-\inner{\hat{V}_{k+1},y_{k}-y_{k+1}}+\frac{1}{\alpha_{k}}\norm{y_{k+1}-y_{k}}^{2}\\
&=r_{1}(y_{k+1})-\alpha_{k}\inner{\hat{V}_{k+1},\tilde{\scrG}_{\eta,\alpha_{k}}(y_{k})}+\alpha_{k}\norm{\tilde{\scrG}_{\eta,\alpha_{k}}(y_{k})}^{2}.
\end{align*}

The descent property \eqref{eq:descent} for $h_{\eta}\in\bC^{1,1}(\scrY)$ gives 
\begin{align*}
h_{\eta}(y_{k+1})&\leq h_{\eta}(y_{k})+\inner{\nabla h_{\eta}(y_{k}),y_{k+1}-y_{k}}+\frac{\lip_{1}(h_{\eta})}{2}\norm{y_{k+1}-y_{k}}^{2}\\
&=h_{\eta}(y_{k})-\alpha_{k}\inner{\nabla h_{\eta}(y_{k}),\tilde{\scrG}_{\eta,\alpha_{k}}(y_{k})}+\frac{\alpha^{2}_{k}\lip_{1}(h_{\eta})}{2}\norm{\tilde{\scrG}_{\eta,\alpha_{k}}(y_{k})}^{2}\\
&=h_{\eta}(y_{k})-\alpha_{k}\inner{\hat{V}_{k+1},\tilde{\scrG}_{\eta,\alpha_{k}}(y_{k})}+\frac{\alpha^{2}_{k}\lip_{1}(h_{\eta})}{2}\norm{\tilde{\scrG}_{\eta,\alpha_{k}}(y_{k})}^{2}\\
&+\alpha_{k}\inner{\hat{V}_{k+1}-\nabla h_{\eta}(y_{k}),\tilde{\scrG}_{\eta,\alpha_{k}}(y_{k})}\\
&\leq h_{\eta}(y_{k})-\alpha_{k}\norm{\tilde{\scrG}_{\eta,\alpha_{k}}(y_{k})}^{2}-(r_{1}(y_{k+1})-r_{1}(y_{k}))+\alpha_{k}\inner{\Delta W_{k+1},\tilde{\scrG}_{\eta,\alpha_{k}}(y_{k})}\\
&+\frac{\alpha^{2}_{k}\lip_{1}(h_{\eta})}{2}\norm{\tilde{\scrG}_{\eta,\alpha_{k}}(y_{k})}^{2}\,,
\end{align*}
where we have used \eqref{eq:ineq_r} in the last inequality. Rearranging the last inequality yields 
\begin{align*}
\Psi_{\eta}(y_{k+1})-\Psi_{\eta}(y_{k})&\leq -\alpha_{k}\norm{\tilde{\scrG}_{\eta,\alpha_{k}}(y_{k})}^{2}\left(1-\frac{\alpha_{k}\lip_{1}(h_{\eta})}{2}\right)+\alpha_{k}\inner{\Delta W_{k+1},\scrG_{\eta,\alpha_{k}}(y_{k})}\\
&+\alpha_{k}\inner{\Delta W_{k+1},\tilde{\scrG}_{\eta,\alpha_{k}}(y_{k})-\scrG_{\eta,\alpha_{k}}(y_{k})}\\
&\leq -\alpha_{k}\norm{\tilde{\scrG}_{\eta,\alpha_{k}}(y_{k})}^{2}\left(1-\frac{\alpha_{k}\lip_{1}(h_{\eta})}{2}\right)+\alpha_{k}\inner{\Delta W_{k+1},\scrG_{\eta,\alpha_{k}}(y_{k})}\\
&+\alpha_{k}\norm{\Delta W_{k+1}}_{\ast}\cdot\norm{\tilde{\scrG}_{\eta,\alpha_{k}}(y_{k})-\scrG_{\eta,\alpha_{k}}(y_{k})}\,,
\end{align*}
where the Cauchy-Schwarz inequality in the last inequality is employed. Using the non-expansiveness of the prox-operator, we obtain 
$$
\norm{\tilde{\scrG}_{\eta,\alpha_{k}}(y_{k})-\scrG_{\eta,\alpha_{k}}(y_{k})}\leq \norm{B^{-1}(\nabla h_{\eta}(y_{k})-V_{k+1})}=\norm{\nabla h_{\eta}(y_{k})-V_{k+1}}_{\ast}=\norm{\Delta W_{k+1}}^{2}_{\ast}.
$$
Hence, we can continue the previous display as
$$
\Psi_{\eta}(y_{k+1})-\Psi_{\eta}(y_{k}) \leq -\alpha_{k}\norm{\tilde{\scrG}_{\eta,\alpha_{k}}(y_{k})}^{2}\left(1-\frac{\alpha_{k}\lip_{1}(h_{\eta})}{2}\right)+\alpha_{k}\inner{\Delta W_{k+1},\scrG_{\eta,\alpha_{k}}(y_{k})}+\alpha_{k}\norm{\Delta W_{k+1}}_{\ast}^{2}.
$$

\subsection{Proof of Lemma \ref{lem:biasestimate}}
For arbitrary $\by\in\scrY$ we compute
\begin{align*}
\Ex_{\Pr}\left[\hat{V}^{\beta}_{\eta,m}(\by)\right]&=\Ex_{\Pr}\left[\frac{1}{m}\sum_{i=1}^{m}\hat{\nabla}_{(U^{i},\eta)}H^{\beta}(\by,\xi^{i})\right]\\
&=\frac{1}{m}\sum_{i=1}^{m}\Ex_{\Pr}\left[\hat{\nabla}_{(U^{i},\eta)}H^{\beta}(\by,\xi^{i})\right]\\
&=\frac{1}{m}\sum_{i=1}^{m}\Ex_{\Pr}\left[\frac{F(\bx^{\beta}(\by+\eta U^{i},\xi^{i}_{2}),\xi_{1}^{i})-F(\bx^{\beta}(\by,\xi^{i}_{2}),\xi_{1}^{i})}{\eta} BU^{i}\right]\\
&=\frac{1}{m}\sum_{i=1}^{m}\Ex_{\Pr}\left[\frac{F( \bx^{\ast}(\by+\eta U^{i}),\xi^{i}_{2}),\xi_{1}^{i})-F(\bx^{\ast}(\by,\xi^{i}_{2}),\xi_{1}^{i})}{\eta} BU^{i}\right]\\
&\quad+\frac{1}{m}\sum_{i=1}^{m}\Ex_{\Pr}\left[\frac{F(\bx^{\beta}(\by+\eta U^{i}),\xi^{i}_{2}),\xi_{1}^{i})-F(\bx^{\ast}(\by+\eta U^{i},\xi^{i}_{2}),\xi_{1}^{i})}{\eta} BU^{i}\right]\\
&\quad-\frac{1}{m}\sum_{i=1}^{m}\Ex_{\Pr}\left[\frac{F(\bx^{\beta}(\by,\xi^{i}_{2}),\xi_{1}^{i})-F(\bx^{\ast}(\by,\xi^{i}_{2}),\xi_{1}^{i})}{\eta} BU^{i}\right].
\end{align*}
From Lemma~\ref{lem:errorbound}, we deduce that
\[ 
\frac{1}{m}\sum_{i=1}^{m}\Ex_{\Pr}\left[\frac{F( \bx^{\ast}(\by+\eta U^{i},\xi^{i}_{2}),\xi^{i}_{1})-F(\bx^{\ast}(\by,\xi^{i}_{2}),\xi^{i}_{1})}{\eta} BU^{i}\right] = \nabla h_{\eta}(\by),
\]
and by mutual independence of $U^{i}$ from $\xi^{i}=(\xi^{i}_{1},\xi^{i}_{2})$ 
\[
\frac{1}{m}\sum_{i=1}^{m}\Ex_{\Pr}\left[\frac{F(\bx^{\beta}(\by,\xi^{i}_{2}),\xi_{1}^{i})-F(\bx^{\ast}(\by,\xi^{i}_{2}),\xi_{1}^{i})}{\eta} BU^{i}\right] = 0. 
\]
For the second assertion, we apply Lipschitz continuity of $F$ (Assumption~\ref{ass:UL}), the iid assumption on the random pair $(U^{i},\xi^{i})$, and Hölder's inequality to obtain
\begin{align*}
 &\frac{1}{m}\sum_{i=1}^m \dnorm{\Ex_{\Pr}\Big[ \frac{F(\bx^{\beta}(\by+\eta U^i,\xi_{2}^{i}),\xi^{i}_{1})-F(\bx^\ast(\by+\eta U^i),\xi^{i}_{1})}{\eta} BU^{i}\Big]}\\ &\le \Ex_{\Pr}\Big[\dnorm{ \frac{F(\bx^{\beta}(\by+\eta U,\xi_{2}),\xi_{1})-F(\bx^\ast(\by+\eta U,\xi_{2}),\xi_{1})}{\eta} BU }\Big] \\
 &=\frac{1}{\eta}\Ex_{\Pr}\Big[\abs{F(x^{\beta}(y+\eta U,\xi_{2}),\xi_{1})-F(x^{\ast}(y+\eta U,\xi_{2}),\xi_{1})}\dnorm{BU}\Big]\\
&\leq \frac{1}{\eta}\Ex_{\Pr}\Big[\lip_{0}(F(\cdot,\xi_{1}))\norm{x^{\beta}(y+\eta U,\xi_{2})-x^{\ast}(y+\eta U,\xi_{2})}_{\scrX}\cdot \dnorm{BU}\Big]\\
&\leq \frac{1}{\eta}\Ex_{\Pr}[\lip_{0}(F(\cdot,\xi_{1}))]\cdot \Ex_{\Pr}\Big[\norm{x^{\beta}(y+\eta U,\xi_{2})-x^{\ast}(y+\eta U,\xi_{2})}_{\scrX}\cdot \dnorm{BU}\Big]\\
&\leq \frac{\abs{\lip_{0}(F(\cdot,\xi_{1})}_{1}}{\eta}\Ex_{\Pr}\Big[\norm{x^{\beta}(y+\eta U,\xi_{2})-x^{\ast}(y+\eta U,\xi_{2})}_{\scrX}^{p}\Big]^{\frac{1}{p}}\cdot\Ex_{\Pr}\Big[\norm{U}^{\frac{p-1}{p}}\Big]^{\frac{p}{p-1}}\\
&\leq \frac{\sqrt{n}\abs{\lip_{0}(F(\cdot,\xi_{1})}_{1}}{\eta}\Ex_{\Pr}\Big[\norm{x^{\beta}(y+\eta U,\xi_{2})-x^{\ast}(y+\eta U,\xi_{2})}_{\scrX}^{p}\Big]^{\frac{1}{p}}.
\end{align*}

\subsection{Proof of Lemma \ref{lem:errorboundinexact}}
\label{proof:lem4.10}
\begin{align*}
\norm{a_{k+1}}_{\ast}&=\frac{1}{m_{k+1}}\norm{\sum_{i=1}^{m_{k+1}}\left[\frac{F(\bx^{\beta_{k}}(y_{k},\xi^{i}_{2,k+1}),\xi_{1,k+1}^{i})-F(\bx^{\ast}(y_{k},\xi^{i}_{2,k+1}),\xi_{1,k+1}^{i})}{\eta}\right]BU^{i}_{k+1}}_{\ast}\\
&\leq \frac{1}{m_{k+1}}\sum_{i=1}^{m_{k+1}}\abs{\frac{F(\bx^{\beta_{k}}(y_{k},\xi^{i}_{2,k+1}),\xi_{1,k+1}^{i})-F(\bx^{\ast}(y_{k},\xi^{i}_{2,k+1}),\xi_{1,k+1}^{i})}{\eta} }\norm{U^{i}_{k+1}}\\
&\leq \frac{1}{m_{k+1}}\sum_{i=1}^{m_{k+1}}\frac{\lip_{0}(F(\cdot,\xi^{i}_{1,k+1}))}{\eta}\norm{\bx^{\beta_{k}}(y_{k},\xi^{i}_{2,k+1})-\bx^{\ast}(y_{k},\xi^{i}_{2,k+1})}_{\scrX}\cdot\norm{U^{i}_{k+1}}
\end{align*}
Hence, by Jensen's inequality and the tower property and the independence of the triple $(\xi^{i}_{1,k+1},\xi^{i}_{2,k+1},U^{i}_{k+1})$, we obtain 
\begin{align*}
\Ex[\norm{a_{k+1}}^{2}_{\ast}\vert\scrF_{k}]&\leq\frac{1}{\eta^{2}m^{2}_{k+1}}\Ex\left[\left(\sum_{i=1}^{m_{k+1}}\lip_{0}(F(\cdot,\xi^{i}_{1,k+1}))\norm{\bx^{\beta_{k}}(y_{k},\xi^{i}_{2,k+1})-\bx^{\ast}(y_{k},\xi^{i}_{2,k+1})}_{\scrX}\cdot\norm{U^{i}_{k+1}}\right)^{2}\vert\scrF_{k}\right]\\
&\leq \frac{1}{\eta^{2}m_{k+1}}\sum_{i=1}^{m_{k+1}}\Ex\left[\lip_{0}(F(\cdot,\xi^{i}_{1,k+1}))^{2}\norm{\bx^{\beta_{k}}(y_{k},\xi^{i}_{2,k+1})-\bx^{\ast}(y_{k},\xi^{i}_{2,k+1})}^{2}_{\scrX}\cdot\norm{U^{i}_{k+1}}^{2}\vert\scrF_{k}\right]\\
&= \frac{n}{\eta^{2}m_{k+1}}\sum_{i=1}^{m_{k+1}}\Ex\left[\lip_{0}(F(\cdot,\xi^{i}_{1,k+1}))^{2}\vert\scrF_{k}\right]\cdot\Ex\left[\norm{\bx^{\beta_{k}}(y_{k},\xi^{i}_{2,k+1})-\bx^{\ast}(y_{k},\xi^{i}_{2,k+1})}^{2}_{\scrX}\vert\scrF_{k}\right]\\
&\leq\frac{n\abs{\lip_{0}(F(\cdot,\xi_{1})}_{2}^{2}}{\eta^{2} m_{k+1}}\sum_{i=1}^{m_{k+1}}\Ex\left[\norm{\bx^{\beta_{k}}(y_{k},\xi^{i}_{2,k+1})-\bx^{\ast}(y_{k},\xi^{i}_{2,k+1})}^{p}_{\scrX}\vert\scrF_{k}\right]^{2/p}\\
&\leq\frac{n\abs{\lip_{0}(F(\cdot,\xi_{1})}_{2}^{2}}{\eta^{2}}\beta^{2}_{k}\eqdef C_{F}\frac{\beta^{2}_{k}}{\eta^{2}},
\end{align*}
where $p\geq 2$, is the exponent from Definition \ref{def:inexact_lower}. 
%
%
We can bound the $L^{2}(\Pr)$-norm for the bias term $b_{k+1}$ in a similar way. First, observe that 
\begin{align*}
\norm{b_{k+1}}_{\ast}&\leq \frac{1}{m_{k+1}}\sum_{i=1}^{m_{k+1}}\abs{\frac{F(\bx^{\beta_{k}}(y_{k}+\eta U^{i}_{k+1},\xi^{i}_{2,k+1}),\xi_{1,k+1}^{i})-F(\bx^{\ast}(y_{k}+\eta U^{i}_{k+1},\xi^{i}_{2,k+1}),\xi_{1,k+1}^{i})}{\eta}}\norm{U^{i}_{k+1}}\\
&\leq \frac{1}{\eta m_{k+1}}\sum_{i=1}^{m_{k+1}}\lip_{0}(F(\cdot,\xi^{i}_{1,k+1}))\norm{\bx^{\beta_{k}}(y_{k}+\eta U^{i}_{k+1},\xi^{i}_{2,k+1})-\bx^{\ast}(y_{k}+\eta U^{i}_{k+1},\xi^{i}_{2,k+1})}_{\scrX}\cdot \norm{U^{i}_{k+1}}.
\end{align*}
Using Jensen's inequality and Hölder's inequality as in the previous estimate, we see for $s\geq 1$, 
\begin{align*}
&\Ex[\norm{b_{k+1}}^{2}_{\ast}\vert\scrF_{k}]\\
&\leq \frac{1}{\eta^{2}m_{k+1}}\sum_{i=1}^{m_{k+1}}\Ex\left[\lip_{0}(F(\cdot,\xi^{i}_{1,k+1}))^{2}\norm{\bx^{\beta_{k}}(y_{k}+\eta U^{i}_{k+1},\xi^{i}_{2,k+1})-\bx^{\ast}(y_{k}+\eta U^{i}_{k+1},\xi^{i}_{2,k+1})}^{2}_{\scrX}\cdot \norm{U^{i}_{k+1}}^{2}\vert\scrF_{k}\right]\\
&= \frac{\abs{\lip_{0}(F(\cdot,\xi^{i}_{1}))}^{2}_{2}}{\eta^{2}m_{k+1}}\sum_{i=1}^{m_{k+1}}\Ex\left[\norm{\bx^{\beta_{k}}(y_{k}+\eta U^{i}_{k+1},\xi^{i}_{2,k+1})-\bx^{\ast}(y_{k}+\eta U^{i}_{k+1},\xi^{i}_{2,k+1})}^{2}_{\scrX}\cdot\norm{U^{i}_{k+1}}^{2}\vert\scrF_{k}\right] \\
& \leq  \frac{\abs{\lip_{0}(F(\cdot,\xi^{i}_{1}))}^{2}_{2}}{\eta^{2}m_{k+1}}\sum_{i=1}^{m_{k+1}}\Ex\left[\norm{\bx^{\beta_{k}}(y_{k}+\eta U^{i}_{k+1},\xi^{i}_{2,k+1})-\bx^{\ast}(y_{k}+\eta U^{i}_{k+1},\xi^{i}_{2,k+1})}^{2s}_{\scrX}\vert\scrF_{k}\right]^{1/s}\cdot\Ex[\norm{U^{i}_{k+1}}^{2r}]^{1/r}
\end{align*}
for $\frac{1}{s}+\frac{1}{r}=1$. Choosing $2s=p$, we obtain 
\begin{align*}
&\Ex[\norm{b_{k+1}}^{2}_{\ast}\vert\scrF_{k}]\\
& \leq  \frac{\abs{\lip_{0}(F(\cdot,\xi^{i}_{1}))}^{2}_{2}}{\eta^{2}m_{k+1}}\sum_{i=1}^{m_{k+1}}\Ex\left[\norm{\bx^{\beta_{k}}(y_{k}+\eta U^{i}_{k+1},\xi^{i}_{2,k+1})-\bx^{\ast}(y_{k}+\eta U^{i}_{k+1},\xi^{i}_{2,k+1})}^{p}_{\scrX}\vert\scrF_{k}\right]^{2/p}\cdot\Ex[\norm{U^{i}_{k+1}}^{\frac{2p}{p-2}}]^{\frac{p-2}{p}}\\
& \leq  \frac{n\abs{\lip_{0}(F(\cdot,\xi^{i}_{1}))}^{2}_{2}}{\eta^{2}}\beta^{2}_{k}=C_{F}\frac{\beta^{2}_{k}}{\eta^{2}}.
\end{align*}

 \section{Monotonicity of the prox-gradient mapping}
 \label{app:2}
%

Consider the function $\varphi_{\by}:\alpha\mapsto \frac{1}{\alpha}\norm{y-T_{\eta,\alpha}(\by)}$. For $\by\in\zer(\partial r_{1}+\nabla h_{\eta})$, if we have $\varphi_{\by}(\alpha)=0$ for all $\alpha>0$. We next prove a classical monotonicity result with respect to the parameter $\alpha$ of this mapping. 
\begin{proposition}
If $\by\notin\zer(\partial r_{1}+\nabla h_{\eta})$, then 
\begin{equation}
\alpha_{1}>\alpha_{2}>0\Rightarrow \varphi_{\by}(\alpha_{1})<\varphi_{\by}(\alpha_{2}). 
\end{equation}
\end{proposition}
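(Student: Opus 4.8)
The plan is to reduce the statement to monotonicity of the convex subdifferential $\partial r_1$ together with a single application of Cauchy--Schwarz. Fix $\by$ and abbreviate $g\eqdef\nabla h_\eta(\by)$, which is well defined because $h_\eta\in\bC^{1,1}(\scrY)$. For $\alpha>0$ set $z_\alpha\eqdef T_{\eta,\alpha}(\by)=\prox_{\alpha r_1}(\by-\alpha B^{-1}g)$ and read off the prox optimality condition: there is $\rho_\alpha\in\partial r_1(z_\alpha)$ with $\tfrac1\alpha B(\by-z_\alpha)-g=\rho_\alpha$. Writing $v_\alpha\eqdef\tfrac1\alpha B(\by-z_\alpha)$, this says $v_\alpha-g\in\partial r_1(z_\alpha)$ and $z_\alpha=\by-\alpha B^{-1}v_\alpha$; since $\norm{B^{-1}u}=\norm{u}_\ast$ it also yields the clean identity $\varphi_\by(\alpha)=\tfrac1\alpha\norm{\by-z_\alpha}=\norm{v_\alpha}_\ast$. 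I would also record that $\by\notin\zer(\partial r_1+\nabla h_\eta)$ forces $z_\alpha\neq\by$ for every $\alpha$ (otherwise $-g\in\partial r_1(\by)$), hence $v_\alpha\neq0$ and $\varphi_\by(\alpha)>0$; the complementary remark preceding the proposition --- $\by$ stationary $\Rightarrow z_\alpha=\by\Rightarrow\varphi_\by\equiv0$ --- is immediate from the same optimality condition.

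Next, take $\alpha_1>\alpha_2>0$ and write $v_i\eqdef v_{\alpha_i}$, $z_i\eqdef z_{\alpha_i}$, $p\eqdef\norm{v_1}_\ast=\varphi_\by(\alpha_1)$, $q\eqdef\norm{v_2}_\ast=\varphi_\by(\alpha_2)$. Monotonicity of $\partial r_1$ applied to $v_1-g\in\partial r_1(z_1)$ and $v_2-g\in\partial r_1(z_2)$ gives $\inner{v_1-v_2,z_1-z_2}\ge0$. Substituting $z_i=\by-\alpha_i B^{-1}v_i$, so that $z_1-z_2=-\alpha_1 B^{-1}v_1+\alpha_2 B^{-1}v_2$, expanding with $\inner{u,B^{-1}w}=\inner{w,B^{-1}u}$ and $\norm{v}_\ast^2=\inner{v,B^{-1}v}$, I arrive at $(\alpha_1+\alpha_2)\inner{v_1,B^{-1}v_2}\ge\alpha_1 p^2+\alpha_2 q^2$. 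Bounding the left-hand side by Cauchy--Schwarz, $\inner{v_1,B^{-1}v_2}\le pq$, gives $(\alpha_1+\alpha_2)pq\ge\alpha_1 p^2+\alpha_2 q^2$, which rearranges to $(q-p)(\alpha_1 p-\alpha_2 q)\ge0$.

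The monotonicity then drops out of a one-line case distinction: if $q<p$ then, since $p,q>0$, we would need $\alpha_1 p\le\alpha_2 q<\alpha_2 p$, i.e. $\alpha_1<\alpha_2$, contradicting $\alpha_1>\alpha_2$; hence $q\ge p$, that is $\varphi_\by(\alpha_1)\le\varphi_\by(\alpha_2)$ --- which is exactly the monotonicity invoked in Theorem~\ref{th:ComplexityGoldstein}. The delicate point --- and the main obstacle --- is upgrading this to the \emph{strict} inequality claimed: equality $p=q$ makes every step above tight, so Cauchy--Schwarz is tight (equal positive norms, nonnegative pairing), forcing $v_1=v_2$; then $z_1-z_2=(\alpha_2-\alpha_1)B^{-1}v_1\neq0$ while $v_1-g$ is a common subgradient of $r_1$ at the distinct points $z_1,z_2$, so $r_1$ is affine along $[z_1,z_2]$. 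This can genuinely happen --- e.g. for $r_1\equiv0$ one has $\varphi_\by(\alpha)\equiv\norm{g}_\ast$, constant whenever $\by$ is non-stationary --- so I would either present the honest non-strict statement $\varphi_\by(\alpha_1)\le\varphi_\by(\alpha_2)$ (which is all the applications use) or add a mild hypothesis excluding affine pieces of $r_1$ along the relevant directions to recover strictness. As a backup route avoiding the subgradient bookkeeping, one may instead view $z_\alpha$ as the $B$-proximal point of $u\mapsto r_1(u)+\inner{g,u-\by}$ at $\by$ and use that the associated Moreau envelope is concave and non-decreasing in $1/\alpha$, with derivative $\tfrac12\norm{\by-z_\alpha}^2$; this simultaneously gives $\alpha\mapsto\norm{\by-z_\alpha}$ non-decreasing and $\varphi_\by$ non-increasing.
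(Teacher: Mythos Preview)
Your approach is essentially the paper's: both arguments feed the prox optimality condition $\tfrac{1}{\alpha}B(\by-z_\alpha)-\nabla h_\eta(\by)\in\partial r_1(z_\alpha)$ into the monotonicity of $\partial r_1$, expand, and close with Cauchy--Schwarz. The only cosmetic difference is that the paper bounds the cross term by $\tfrac12(p^2+q^2)$ (Young) and factors as $(\alpha_1-\alpha_2)(\varphi_\by(\alpha_1)^2-\varphi_\by(\alpha_2)^2)\le0$, whereas you use the sharper bound $pq$ and factor as $(q-p)(\alpha_1 p-\alpha_2 q)\ge0$; both yield $\varphi_\by(\alpha_1)\le\varphi_\by(\alpha_2)$.

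Your critique of the \emph{strict} inequality is correct and goes beyond what the paper achieves: the paper's own final display is also only non-strict, so the proposition as stated is not actually proved there either. Your counterexample $r_1\equiv0$ is valid --- then $T_{\eta,\alpha}(\by)=\by-\alpha B^{-1}\nabla h_\eta(\by)$ and $\varphi_\by(\alpha)=\norm{\nabla h_\eta(\by)}_\ast$ is constant in $\alpha$ while $\by\notin\zer(\nabla h_\eta)$ --- so strictness genuinely fails without further hypotheses on $r_1$. You are also right that the sole consumer of this result (the argument leading to \eqref{eq:dist1} in Section~\ref{sec:Goldstein}) only needs the non-strict version $\norm{\scrG_{\eta,\alpha_1}(\by)}\le\norm{\scrG_{\eta,\alpha_k}(\by)}$ for $\alpha_1\ge\alpha_k$, so the defect in the statement is harmless for the paper's purposes.
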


\begin{proof}
To simplify notation, let us define $\bar{\by}(\alpha):=T_{\eta,\alpha}(\by)$. This point satisfies the monotone inclusion (Fermat's optimality principle)
\begin{align*}
\frac{1}{\alpha}B(y-\bar{y}(\alpha))-\nabla h_{\eta}(\by)\in\partial r_{1}(\bar{\by}(\alpha)). 
\end{align*}
Hence, for $\alpha_{1}>\alpha_{2}>0$, the maximal monotonicity of the subdifferential $\partial r_{1}$ yields
\[
\inner{\frac{1}{\alpha_{1}}B(\by-\bar{\by}(\alpha_{1}))-\frac{1}{\alpha_{2}}B(\by-\bar{\by}(\alpha_{2})),\bar{\by}(\alpha_{1})-\bar{\by}(\alpha_{2})}\geq 0. 
\]
Rearranging, 
\begin{align*}
0&\leq \frac{1}{\alpha_{1}}\inner{B(\by-\bar{\by}(\alpha_{1})),\bar{\by}(\alpha_{1})-\by}+\frac{1}{\alpha_{1}}\inner{B(\by-\bar{\by}(\alpha_{1})),\by-\bar{\by}(\alpha_{2})}\\
&-\frac{1}{\alpha_{2}}\inner{B(\by-\bar{\by}(\alpha_{2})),\by-\bar{\by}(\alpha_{1})}\\
&-\frac{1}{\alpha_{2}}\inner{B(\by-\bar{\by}(\alpha_{2})),\by-\bar{\by}(\alpha_{2})}\\
&=-\frac{1}{\alpha_{1}}\norm{\bar{\by}(\alpha_{1})-\by}^{2}-\frac{1}{\alpha_{2}}\norm{\bar{\by}(\alpha_{2})-\by}^{2}\\
&+\left(\frac{1}{\alpha_{1}}+\frac{1}{\alpha_{2}}\right)\inner{B(\by-\bar{\by}(\alpha_{1})),\by-\bar{\by}(\alpha_{2})}
\end{align*}
Consequently, 
\begin{align*}
\alpha_{1}\varphi_{\by}(\alpha_{1})^{2}+\alpha_{2}\varphi_{\by}(\alpha_{2})^{2}&\leq (\alpha_{1}+\alpha_{2})\inner{B\left(\frac{\by-\bar{\by}(\alpha_{1})}{\alpha_{1}}\right),\frac{\by-\bar{\by}(\alpha_{2})}{\alpha_{2}}}\\
&\leq \frac{\alpha_{1}+\alpha_{2}}{2}\left(\varphi_{\by}(\alpha_{1})^{2}+\varphi_{\by}(\alpha_{2})^{2}\right).
\end{align*}
Rearranging, we see that 
$$
(\alpha_{1}-\alpha_{2})\left(\varphi_{\by}(\alpha_{1})^{2}-\varphi_{\by}(\alpha_{2})^{2}\right)\leq 0
$$
\end{proof}

 \end{appendix}
 
\bibliographystyle{plain}
\bibliography{mybib}

 \end{document}